\newtheorem{thm}{Theorem}[section]
\newtheorem{prop}[thm]{Proposition}
\newtheorem{lem}[thm]{Lemma}
\newtheorem{cor}[thm]{Corollary}
\theoremstyle{definition}
\newtheorem{dfn}[thm]{Definition}
\theoremstyle{remark}
\newtheorem{rmk}[thm]{Remark}
\newtheorem{fact}[thm]{Fact}
\newtheorem{examp}[thm]{Example}
\newcommand{\tr}{\,^t\!}
\newcommand{\g}{\mathfrak{g}}
\newcommand{\h}{\mathfrak{h}}
\renewcommand{\k}{{\mathfrak k}}
\renewcommand{\l}{\mathfrak l}
\newcommand{\p}{\mathfrak p}
\newcommand{\z}{\mathfrak z}
\renewcommand{\t}{{\mathfrak t}}
\newcommand{\C}{{\mathbb C}}
\newcommand{\N}{{\mathbb N}}
\newcommand{\R}{{\mathbb R}}
\newcommand{\Z}{{\mathbb Z}}
\numberwithin{equation}{section}
\begin{document}
\title{ Branching problems  in reproducing kernel spaces }
\author{  Bent {\O}rsted,   Jorge A.  Vargas}
\thanks{Partially supported by Aarhus University  (Denmark),  CONICET
(Argentina)  }
\date{\today }
\keywords{Admissible restriction, branching laws, reproducing kernel, discrete series}
\subjclass[2010]{Primary 22E46; Secondary 17B10}
\address{ Mathematics Department, Aarhus University, Denmark; FAMAF-CIEM, Ciudad Universitaria, 5000 C\'ordoba, Argentine}
\email{orsted@imf.au.dk,   vargas@famaf.unc.edu.ar}

\begin{abstract}
 For a semisimple Lie group $G$ satisfying the
equal rank condition, the most basic family of unitary
irreducible representations is the discrete series found
by Harish-Chandra. In this paper, we study some of the
branching laws for discrete series when restricted to a subgroup
$H$ of the same type by combining classical results
with recent work of T. Kobayashi; in particular, we prove
discrete decomposability under Harish-Chandra's
condition of cusp form on the reproducing kernel. We show a relation between discrete decomposability and representing certain intertwining operators in terms of differential operators.
\end{abstract}
\maketitle
\markboth{{\O}rsted- Vargas}{Branching problems and rep. kernels}

\tableofcontents

\section{Introduction}\label{sec:introduccion}

   Restricting a unitary irreducible representation
$\pi$ of a Lie group $G$ to a closed subgroup $H$ leads to
the {\it branching law problem}, namely  finding the explicit
decomposition of $\pi$ into irreducible representations
of $H$. This objective generalizes the theory of spectral decomposition
of a selfadjoint operator and, in the same way,   both a discrete and a continuous spectrum  may
occur.

In this paper, we shall consider the case of $G,$ a semisimple
Lie group, with $\pi$, a representation in the discrete series,
that is, $\pi$ occurs as a left-invariant closed irreducible subspace of $L^2(G)$;
these form the celebrated discrete series of Harish-Chandra.
$H$ will be a subgroup of the same type, and we shall
assume that both $G$ and $H$ admit discrete series.  An
aim is to find criteria ensuring that the branching law for $\pi$
gives irreducible $H$-invariant subspaces,  perhaps even that the
restriction of $\pi$ is a direct sum of such irreducible $H$-factors, in which case
we call this {\it discretely decomposable}, and, if this
happens with finite multiplicities, we call it {\it $H-$admissible}.
We shall combine the results of Harish-Chandra on the
distribution character and also the Plancherel formula
for $G$ with recent results of T. Kobayashi on the
admissibility of representations  to obtain some
new results on branching laws in our setting. Some of our proofs rely on work of N. Wallach published in \cite{Wa2}. We shall
also apply the theory of reproducing kernels, using some
specific models of the discrete series. This approach involves the
spherical functions  studied by Harish-Chandra. Note
that a special case of our situation could be with $H$ as
the diagonal subgroup in $G \times G$, so that the
branching problem corresponds to decomposing a tensor
product; this is already a complicated problem for
discrete series,  we hope our approach  leads to a
deeper understanding (as an easy, yet not completely
trivial consequence, we see that $\pi \otimes \pi^*$
is not discretely decomposable).

There are (at least) two basic facts from Harish-Chandra that
we use (1) convolution by the distribution character of $\pi$
gives the orthogonal projection onto the isotypic component of  $\pi$, and (2) the trace
of the spherical function for the lowest $K$-type $W$ is the
convolution (over $K$) of the distribution character and the
character of $W$. We also draw on  the groundwork on $H-$admissibility of T. Kobayashi \cite{K1}, \cite{K2}, \cite{K3}.   Our interest is only within the category
of unitary representations and spectral decomposition in
Hilbert spaces; thus, we do not consider the smooth category
and the corresponding {\it symmetry-breaking operators}
of T. Kobayashi. For this paper, a {\it symmetry breaking operator} is a continuous $H-$map from a unitary representation of $G$ into one of $H$. Concerning  either the structure of symmetry breaking operators or   the structure of the orthogonal projector  onto an isotypic component, we would like to mention the seminal work of Kobayashi-Pevzner \cite{KobPev}	
  \cite{KobPev2} \cite{KobPev3}, Kobayashi-Speh \cite{KobSp1} \cite{KobSp2},
Nakahama  \cite{N},   Peng-Zhang \cite{PZ}  and references therein.
  The    branching problem for  the tensor product of
two holomorphic discrete series of $G=SL_2(\R)$ has been studied by Molch$\check{\textrm{a}}$nov \cite{M2},  J. Repka and  other authors. An account of  techniques and results for this example  is in Kobayashi-Pevzner
\cite{KobPev3}.  In particular,  it is shown  that the expression of
the symmetry breaking operators employing Rankin-Cohen brackets
is helpful to make explicit  the corresponding   Plancherel formula.  In
  \cite{KobSp1} \cite{KobSp2}, Kobayashi-Speh analyze branching problems from $O(n,1)$ to $O(n-1,1)$ for general unitary irreducible representation $\pi$. In order to
study the  representations in the discrete spectrum they find an interesting
relation with the symmetry-breaking differential operators found by A. Juhl \cite{AJ}.
We may say that all of these examples, in addition to the
results on wavefront in \cite{HHO} \cite{HHO2},  have been a source of
inspiration for the questions this paper
considers. We want to point out that this paper is a sequel to \cite{OV} and that C.R.A.S., \cite{OV2}, has published a partial announcement of this paper.

We recall that applications of the knowledge of symmetry breaking operators to Harmonic Analysis have been  part of the research of several authors, among  them we mention \cite{KobPev3},  \cite{DaOZ}, the work of many authors on decomposing as direct integral of irreducible representations the left regular representation of $G$ on $L^2(G/H)$,   the papers of Molch$\check{\textrm{a} }$nov \cite{M1}, \cite{M2}.

   It remains an interesting question to understand the geometric nature of the differential
operators found to exist in connection with symmetry breaking for the family of
representations considered here, i.e., they live in spaces of sections over Riemannian
symmetric spaces. In addition to finding explicitly these operators, it might be
possible to extend their existence to more general Riemannian manifolds and
submanifolds - this should then include curvature of such more general manifolds.

We now mention some of the main results in this paper.

This paper splits into twelve sections. The results in Sections 2.0, 5.0, and  subsections 3.2, 6.2  are quite technical. In subsection 3.1, we show that under certain hypotheses, symmetry breaking operators agree on  dense subspaces with   integral operators. In all the  sections, we use the fact that a discrete series representation admits a model on a space of solutions to  elliptic differential operators. For example,   we carry out an analysis of symmetry breaking operators as differential operators (subsection 4.1, 4.2, 4.3). In particular, we show that $H-$admissibility implies that all the elements of some families of  symmetry breaking operators are represented by differential operators. We also show a converse statement.   It is quite  a different  situation when we analyze the orthogonal projector onto an isotypic component (Examples 6.2, 6.3); under the hypothesis  of admissibility, we may show projectors are represented via truncated Taylor series (section 6) or some version of infinite order differential operators. Concerning   necessary and sufficient conditions for a representation to be $H-$discretely decomposable, we obtain criteria saying that some particular symmetry breaking operators are restrictions of differential operators (Theorem 4.3) or in terms of reproducing kernels and cusp forms of Harish-Chandra (Theorem 7.1).  We  observe  that in the family of discrete series representations, the hypothesis $H$-admissible implies $H-$smooth vectors in a isotypic component are $G-$smooth vectors. In section 8, we introduce two functions that are suitable for checking   the existence of discrete factors (Theorem 8.4). In two appendices, we collect results on integral operators, reproducing kernel subspaces  and kernel of elliptic invariant differential operators, as well as notation. The main results in this paper are: Theorems 3.5, 4.3, Proposition 4.4, Theorem 4.11,  Propositions 5.2, 6.6, 6.7, Theorem 7.1, Corollary 7.4.

\section{Preliminaries }\label{sec:prelim}

Let $G$ be an arbitrary, matrix,  connected semisimple Lie group. Henceforth we fix a maximal compact subgroup $K$ for $G$  and a maximal torus $T$ for $K.$   Harish-Chandra showed that $G$ admits square integrable irreducible representations if and only if $T$ is a Cartan subgroup of $G.$  For this paper,  we always assume $T$ is a Cartan subgroup of $G.$ Under these hypotheses, Harish-Chandra  showed that  the set of equivalence classes of irreducible square integrable representations is parameterized by a lattice in $i\mathfrak t_\mathbb R^\star.$ To state our results, we need to make  explicit this parametrization and set up some notation.   As usual,  the Lie algebra of a Lie group is denoted by the corresponding lower case German letter followed by the subindex $\mathbb R.$  The complexification of  the Lie algebra of a Lie group is  denoted by the corresponding German letter without any subscript.     $V
^\star $ denotes the dual space to a vector space $V.$ Let $\theta$ be the Cartan involution which corresponds to the subgroup $K,$ the associated Cartan decomposition is denoted by $\g=\k +\p.$ Let $\Phi(\g,\t) $ denote the root system attached to the Cartan subalgebra $\t.$ Hence, $\Phi(\g,\t)=\Phi_c \cup \Phi_n =\Phi_c(\g, \t) \cup \Phi_n (\g, \t)$ splits up as the union the set of compact roots and the set of noncompact roots. From now on, we fix a system of positive roots $\Delta $ for $\Phi_c.$   For this paper, either the highest weight or the infinitesimal character of an irreducible representation of $K$ is  dominant with respect to $\Delta.$ The Killing form gives rise to an inner product $(...,...)$ in $i\t_\mathbb R^\star.$ As usual, let $\rho=\rho_G $ denote half of the sum of the roots for some system of positive roots for $\Phi(\g, \t).$  \textit{A Harish-Chandra parameter} for $G$ is $\lambda \in i\t_\mathbb R^\star$ such that $(\lambda, \alpha)\not= 0 , $ for every $\alpha \in \Phi(\g,\t) ,$    and so that $\lambda + \rho$ lifts to a character of $T.$ To each Harish-Chandra parameter $\lambda$, Harish-Chandra, associates a unique irreducible square integrable representation $(\pi_\lambda^G , V_\lambda^G)$ of $G$ of infinitesimal character $\lambda.$ Moreover, he showed the map $\lambda \rightarrow (\pi_\lambda^G, V_\lambda^G)$  is a bijection from the set of Harish-Chandra parameters dominant with respect to $\Delta$  onto the set of equivalence classes of irreducible square integrable representations for $G$ (cf. \cite[Chap 6]{Wa1}). For short, we will refer to an irreducible square integrable representation as a discrete series representation.

 Each Harish-Chandra parameter $\lambda$ gives rise to a system of positive roots \\
\phantom{xxxxxxxxxxxxxxx}$\Psi_\lambda =\Psi_{G,\lambda} =\{  \alpha \in \Phi(\mathfrak g, \mathfrak t) : (\lambda, \alpha) >0 \}.$ \\ From now on, we assume that Harish-Chandra parameter for $G$ are dominant with respect to  $\Delta.$ Whence,  $\Delta \subset \Psi_\lambda.$

 Henceforth,    $\Psi$ is a system of positive roots for $\Phi (\g, \t)$ containing $\Delta,$ and
 $(\pi_\lambda^G , V_\lambda^G )$ a square integrable representation for $G$ of Harish-Chandra parameter $\lambda$ dominant with respect to $\Psi.$  $ (\tau ,W ):= (\pi_{\lambda +\rho_n}^K , V_{\lambda + \rho_n}^K) $ denotes the lowest $K-$type of $\pi_\lambda :=\pi_\lambda^G.$ The highest weight of $(\pi_{\lambda +\rho_n}^K , V_{\lambda + \rho_n}^K)$ is $\lambda +\rho_n -\rho_c.$  We recall a Theorem of Vogan's thesis which states that $(\tau,W)$ determines $(\pi_\lambda, V_\lambda^G)$ up  to unitary equivalence \cite{Vo}.   We recall the set of square integrable sections of the vector bundle determined by the principal bundle $K\rightarrow G \rightarrow G/K$ and the representation $(\tau, W)$ of $K$ is isomorphic to the space \begin{multline*}L^2(G\times_\tau W):= \{ f \in L^2(G) \otimes W : \\ f(gk)=\tau(k)^{-1} f(g),   g  \in G, k \in K \}.
   \end{multline*}
   Here, the action of $G$ is by left translation $L_x, x \in G.$   The inner product on $L^2(G)\otimes W$ is given by \begin{equation*}(f,g)_{V_\lambda} =\int_G (f(x),g(x))_W dx, \end{equation*} where $(...,...)_W$ is a $K-$invariant inner product on $W.$
   Subsequently, $L_D $ (resp. $R_D)$ denotes the left infinitesimal (resp. right infinitesimal) action on functions from $G$ of an element  $D$ in universal enveloping algebra $U(\g)$ for the Lie algebra $\g$.  As usual, $\Omega_G$ denotes the Casimir operator for $\g.$   Following Hotta, Enright-Wallach \cite{OV}, we realize $V_\lambda :=V_\lambda^G $ as the space
\begin{multline*}
  H^2(G, \tau) =\{ f \in L^2(G) \otimes W : f(gk)=\tau(k)^{-1} f(g) \\ g\in G, k \in K, R_{\Omega_G} f= [(\lambda, \lambda) -(\rho, \rho)] f  \}.
 \end{multline*}
Henceforth, for $f \in H^2(G,\tau)$, we write $\pi_\lambda (x)(f):=L_x (f).$ We  recall $R_{\Omega_G} = L_{\Omega_G} $ is an elliptic $G-$invariant operator on the vector bundle $W \rightarrow G \times_\tau W \rightarrow G/K$ and hence,  $\,H^2(G,\tau)$ consists of smooth sections,   moreover point evaluation $e_x$ defined by  $ \,H^2(G,\tau) \ni f \mapsto f(x) \in W $ is continuous for each $x \in G$ (cf. Appendix~\ref{subsec:A4}). Therefore, the orthogonal projector $P_\lambda$ onto $\,H^2(G,\tau)$ is an integral map (integral operator) represented by the smooth Carleman {\it matrix  kernel} or {\it reproducing kernel} (cf. Appendix~\ref{subsec:A1}, Appendix~\ref{subsec:A4}, Appendix~\ref{subsec:A6}). \begin{equation} \label{eq:Klambda}K_\lambda : G\times G \rightarrow End_\C (W) \end{equation} which satisfies $  K_\lambda (\cdot ,x)^\star w$ belongs to $\,H^2(G,\tau)$ for each $x \in G, w \in W$ and $$ (P_\lambda (f)(x), w)_W=\int_G (f(y), K_\lambda (y,x)^\star w)_W dy, \,   f\in L_2(G\times_\tau W).$$
For a closed reductive subgroup  $H$, after conjugation by an inner automorphism of $G$ we may and will assume  $  L:=K\cap H $ is a maximal compact subgroup for $H.$ That is, $H$ is $\theta-$stable. In this paper for irreducible square integrable representations $(\pi_\lambda, V_\lambda)$  for $G $ we  analyze the restriction to $H.$ In particular, we study the irreducible $H-$subrepresentations for $\pi_\lambda$. A known result is that any irreducible $H-$subrepresentation of $V_\lambda$ is a square integrable representation for $H$, for a proof  (cf. \cite{GW}). Thus, owing to the result of Harish-Chandra on the existence of square integrable representations,  from now on, we  may and will assume {\it $H$ admits a compact Cartan subgroup}. After conjugation, we may assume $U:=H\cap T$ is a maximal torus in $L=H\cap K.$ Next,     we consider a square integrable representation $H^2(H, \sigma) \subset L^2 (H \times_\sigma Z)$ of lowest $L-$type $(\sigma, Z).$  An aim of this paper is to understand the nature of the intertwining operators between the unitary $H-$representations $H^2(H,\sigma)$ and $\,H^2(G,\tau), $ the adjoint of such intertwining operators and consequences of their structure.

\begin{rmk} To give a glance on the nature of the elements in $\,H^2(G,\tau)$, we show: Every $f \in \,H^2(G,\tau)$ is a bounded function.\\
	In fact, let $K_\lambda (y,x)$ be as in  \ref{eq:Klambda}.  Thus, $ f(x)=\int_G K_\lambda(y,x) f(y) dy.$ Then, Schwarz inequality,  $K_\lambda (\cdot,x)$ is square integrable, the equality  $K_\lambda (y,x)=K_\lambda (x^{-1}y,e)$  and the  invariance of Haar measure, justify  the inequalities
	\begin{alignat*}{2}
	\Vert f(x) \Vert_W  & \leq  \int_G \Vert  K_\lambda (y,x)\Vert_{Hom(W,W} \Vert f(y) \Vert_W  dg \\
	& \, \leq (\int_G \Vert K_\lambda (y,x) \Vert^2 dy )^\frac12  \, (\int_G \Vert f(y) \Vert^2 dy)^\frac12 \\ & \, \leq (\int_G \Vert K_\lambda (y,e) \Vert^2 dy )^\frac12  \, (\int_G \Vert f(y) \Vert^2 dy)^\frac12 \\ & \, = \Vert K_\lambda (\cdot,e)\Vert_{L^2(G)} \Vert f \Vert_2.
	\end{alignat*}
\end{rmk}

\section{Structure  of intertwining maps}
  For this section, $G,K, (\tau, W), (\pi_\lambda,\,H^2(G,\tau))=(\pi_\lambda^G ,V_\lambda^G), H, L$ are as in  Section~\ref{sec:prelim}. Besides, we fix   a finite dimensional representation $(\nu, E)$ for $L$, and  a continuous intertwining linear $H-$map  $T : L^2(H\times_\nu E)\rightarrow \,H^2(G,\tau)$. \begin{fact}    We show $T$ is a Carleman kernel map. (cf. Appendix~\ref{subsec:A1}). \end{fact}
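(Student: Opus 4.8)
The plan is to exploit that the target $\,H^2(G,\tau)$ is a reproducing kernel Hilbert space, so that point evaluation is continuous, together with the continuity of $T$ and the finite dimensionality of $W$. I want to stress at the outset that the $H$-equivariance of $T$ plays no role in this particular conclusion: only the boundedness of $T$ and the reproducing kernel property of the target will enter.

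First I would fix $x\in G$ and consider the point evaluation $e_x\colon \,H^2(G,\tau)\to W$, $f\mapsto f(x)$, which is continuous by the property of $K_\lambda$ recalled in Section~\ref{sec:prelim} (cf. Appendix~\ref{subsec:A4}). Since $T$ is continuous, the composite $e_x\circ T\colon L^2(H\times_\nu E)\to W$ is a bounded linear map into the finite dimensional Hilbert space $W$. Fixing an orthonormal basis $w_1,\dots,w_n$ of $W$, each functional $\phi\mapsto ((T\phi)(x),w_j)_W$ is then a bounded linear functional on $L^2(H\times_\nu E)$.

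Next I would invoke the Riesz representation theorem to produce, for every $x$ and every $j$, a unique $\kappa_{x,j}\in L^2(H\times_\nu E)$ with
\[
((T\phi)(x),w_j)_W=(\phi,\kappa_{x,j})=\int_H(\phi(h),\kappa_{x,j}(h))_E\,dh .
\]
Assembling these, I define $K(x,h)\in \mathrm{Hom}_\C(E,W)$ by $K(x,h)v=\sum_j (v,\kappa_{x,j}(h))_E\,w_j$; expanding $(T\phi)(x)=\sum_j((T\phi)(x),w_j)_W\,w_j$ then gives the integral representation $(T\phi)(x)=\int_H K(x,h)\phi(h)\,dh$. Since $\{w_j\}$ is orthonormal and $K(x,h)^\star w_j=\kappa_{x,j}(h)$, one has
\[
\int_H\Vert K(x,h)\Vert_{HS}^2\,dh=\sum_j\Vert\kappa_{x,j}\Vert^2<\infty ,
\]
where $\Vert\cdot\Vert_{HS}$ is the Hilbert--Schmidt norm, so each row $K(x,\cdot)$ belongs to $L^2(H,\mathrm{Hom}_\C(E,W))$; this is exactly the Carleman condition (Appendix~\ref{subsec:A1}).

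The one genuinely technical point, which I would handle using the measurability results collected in Appendix~\ref{subsec:A1}, is to upgrade this pointwise-in-$x$ construction to a jointly measurable kernel $(x,h)\mapsto K(x,h)$ and to fix coherently the almost-everywhere ambiguity in the representatives $\kappa_{x,j}$. Here the continuity of $x\mapsto e_x$, hence of $x\mapsto \kappa_{x,j}\in L^2(H\times_\nu E)$, is what lets one select a jointly measurable representative. I expect this bookkeeping, rather than the integral representation itself, to be the main obstacle; once it is in place, the displayed formula identifies $T$ with the Carleman operator attached to $K$, as claimed.
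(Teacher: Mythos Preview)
Your proof is correct and follows essentially the same route as the paper: continuity of point evaluation on the reproducing kernel space $H^2(G,\tau)$, composed with the bounded $T$, plus the Riesz representation theorem to produce the kernel. The paper's argument is terser (it does not spell out the orthonormal-basis assembly or the measurability bookkeeping you flag), but the core idea is identical, and your observation that $H$-equivariance is not needed for this particular fact is accurate.
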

In fact,  for each $x \in G, w \in W$, the linear functional  $L^2(H\times_\nu E) \ni g \mapsto (Tg(x),w)_W$ is continuous.  Riesz representation Theorem shows there exists a function  $$K_T : H\times G \rightarrow Hom_\C (E,W)$$ so that
the map $ h\mapsto K_T(h,x)^\star(w)$ belongs  to $L^2(H \times_\nu E)$ and for $g \in L^2(H\times_\nu E), x \in G,  w \in W$  we have the absolutely  convergent integral and the equality
\begin{equation}\label{eq:tisintegral}
(Tg(x),w)_W    =\int_H (g(h), K_T(h,x)^\star w)_Z  dh.
\end{equation}
That is, $T$  is the integral map  $$Tg(x)=\int_H K_T(h,x) g(h) dh,  x \in G.$$

\subsection{Symmetry breaking operators}

Examples shows that the adjoint $T^\star$ of   integral linear map $T$  need not be an integral map (cf. Appendix 10.3), whence, we would like to know when $T^\star$ is an integral linear map.     Formally, we may  write $T^\star f (h) =\int_G K_T(h,x)^\star f(x) dx,$ where the convergence of the integral is in the weak sense. That is, for each $g \in L^2(H\times_\nu E), f \in \,H^2(G,\tau),$ we have the  convergence of the iterated integral $$(T^\star f,g)_{L^2(H\times_\nu E)} = \int_G  \int_H (f(x), K_T(h,x)g(h))_W dh dx. $$   Thus, the adjoint $T^\star$ of any continuous $T : L^2(H\times_\nu E) \rightarrow \,H^2(G,\tau)$ is a weak integral map. \\ In order to study branching problems,   T. Kobayashi has introduced  the concept of symmetry breaking operator. In our setting, a {\it symmetry breaking operator} is a continuous $H-$map
$S : \,H^2(G,\tau) \rightarrow L^2(H\times_\nu E)$. For a symmetry breaking operator $S$, the above considerations applied to $T:=S^\star$  let us conclude: under our hypothesis,  a symmetry breaking operator is always a weak integral map.\\
In \cite[page 45]{Fo}, we find an exposition on   Bargmann transform $ B :L^2(\mathbb R^n) \rightarrow \mathcal H_2(\mathbb C^n)$. $B$ is an example of  integral map,  such that  $B^\star$ may not be an integral map. However, Bargmann has shown  $B^\star $ restricted to certain dense subspace is an integral map. In Appendix~\ref{subsec:A3}, we provide an example of  a continuous integral map into a reproducing kernel space such is  that its adjoint is not an integral map on the whole space. However, it is equal to an integral map on a certain subspace. Theorem~\ref{prop:symbrea} shows that a quite common feature for the kind of integral maps $T$ under our consideration, that its adjoint $T^\star$ is an integral map on certain dense subspace of $\,H^2(G,\tau).$ In order to state our results we need a few definitions.
\begin{dfn} \label{dfn:disdecom} A unitary representation  $(\pi , V)$  of $G$   is discretely decomposable over $H$, if there exists an orthogonal family of closed, $H-$invariant, $H-$irreducible subspaces of $V$ so that the closure of its algebraic sum is equal to $V.$   \end{dfn}
\begin{dfn}  A unitary  representation $(\pi, V)$ of $G$ is $H-$admissible if the representation is discretely decomposable over $H$ and the  multiplicity of each irreducible $H$-factor is finite. \end{dfn}
In \cite{KO}, we find a complete list of triples $(G,H, \pi)$ such that $(G,H)$ is a symmetric pair and $\pi$ is an $H-$admissible representation.   For example, for the pair $(SO(2n,1), SO(2k)\times SO(2n-2k,1))$ there is no $\pi_\lambda$ with an admissible restriction to $H.$  Whereas, for the pair $(SU(m,n), S(U(m,k) \times U (n-k))) $ there are exactly $m$  Weyl chambers $C_1,\dots,C_m$ in $i\mathfrak t_\R^\star$, so that $\pi_\lambda$ is $H-$admissible if and only $\lambda$ belongs to   $C_1\cup \dots \cup C_m.$ For both cases $0<k<n$.
\begin{dfn} \label{dfn:integ} A   representation $(\pi , V)$ of $G$ is integrable, if some nontrivial  matrix coefficient is an integrable function with respect to Haar measure. \end{dfn} It is a Theorem of Trombi-Varadarajan, Hecht-Schmid, \cite{HS}:  $( \pi_\lambda, V_\lambda)$ is an integrable representation if and only if   $\vert (\lambda, \beta)\vert > \frac12 \sum_{\alpha \in  \Phi(\g,\t): (\alpha, \beta)>0} (\alpha, \beta)$ for every noncompact root $\beta.$\\
 The next result gives more information on symmetry breaking operators. We  recall (cf. section 2) that the action $\pi_\lambda$ of $G$    on $H^2(G,\tau)$ by left translation  provides an explicit realization of $(\pi_\lambda, V_\lambda)$.
\begin{thm} \label{prop:symbrea} Let  $S : \,H^2(G,\tau) \rightarrow L^2(H\times_\nu E)$ be a continuous intertwining linear $H-$map. Then,
	
	a) If the restriction to $H$ of $(\pi_\lambda,  \,H^2(G,\tau))$ is discretely decomposable, then $S$ is an integral map.
	
	b) If $(\pi_\lambda, \,H^2(G,\tau))$ is an integrable representation, then $S$ restricted to the subspace of smooth vectors is an integral linear map.
\end{thm}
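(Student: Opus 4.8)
The plan is to reduce both statements to a single question about the candidate kernel of $S$, and then to control its convergence by two different mechanisms. Observe first that, by the Fact leading to \eqref{eq:tisintegral} applied to the continuous $H$-map $T:=S^\star\colon L^2(H\times_\nu E)\to H^2(G,\tau)$, the adjoint $T$ is automatically a Carleman map with kernel $K_T\colon H\times G\to \mathrm{Hom}_{\mathbb C}(E,W)$. Testing the defining relation of $K_T$ against the reproducing kernel \eqref{eq:Klambda} of $V_\lambda$ and using $T^\star=S$, one gets the identity $K_T(\cdot,x)^\star w=S\big(K_\lambda(\cdot,x)^\star w\big)$ in $L^2(H\times_\nu E)$, for all $x\in G$ and $w\in W$. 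Moreover the invariance $K_\lambda(y,x)=K_\lambda(x^{-1}y,e)$ recorded in the Remark says exactly that $K_\lambda(\cdot,x)^\star w=\pi_\lambda(x)\Phi_w$, where $\Phi_w:=K_\lambda(\cdot,e)^\star w\in V_\lambda$. Hence the only candidate for an integral kernel of $S$ is $L_S(h,x):=K_T(h,x)^\star$, and $L_S(h,x)w=\big(S\,\pi_\lambda(x)\Phi_w\big)(h)$. The whole problem is therefore to show that, for (almost) every $h$, the function $x\mapsto L_S(h,x)$ is integrable against $f$ in the appropriate sense and that $Sf(h)=\int_G L_S(h,x)f(x)\,dx$; once $x\mapsto L_S(h,\cdot)$ is shown to be square integrable (resp. the integral to converge absolutely), the weak-integral identity already noted for $T^\star$ upgrades to this genuine one by Fubini.

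For part (a) I would produce, for each $h$, a vector $\psi_h\in V_\lambda$ representing the functional $f\mapsto (Sf)(h)$. Write $V_\lambda=\widehat{\bigoplus}_j\mathcal H_j$ for the discrete decomposition of Definition \ref{dfn:disdecom}; each $\mathcal H_j$ is, by the square integrability of $H$-subrepresentations recalled in Section \ref{sec:prelim}, a discrete series of $H$, and therefore so is its image $\overline{S\mathcal H_j}\subset L^2(H\times_\nu E)$. Being an $H$-discrete series, this image is a reproducing kernel subspace, so point evaluation at $h$ is continuous on it; composing with $S|_{\mathcal H_j}$ yields a bounded functional on $\mathcal H_j$, represented by some $\psi_h^{(j)}\in\mathcal H_j\subset L^2(G)$. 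The $H$-equivariance of $S$ gives $\psi_h^{(j)}=\pi_\lambda(h)\psi_e^{(j)}$, so $\|\psi_h^{(j)}\|$ is independent of $h$, and the $\mathcal H_j$ are mutually orthogonal in $L^2(G)$. Granting that $\sum_j\|\psi_e^{(j)}\|^2<\infty$, the sum $\psi_h:=\sum_j\psi_h^{(j)}$ converges in $V_\lambda$ and represents $f\mapsto (Sf)(h)$; then $L_S(h,x)w=\langle\pi_\lambda(x)\Phi_w,\psi_h\rangle$ is an honest matrix coefficient of the square integrable representation $\pi_\lambda$, hence lies in $L^2(G)$, and Cauchy--Schwarz together with Fubini deliver $Sf(h)=\int_G L_S(h,x)f(x)\,dx$.

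For part (b) the decomposition is unavailable, and I would instead use integrability directly. By the Hecht--Schmid criterion \cite{HS} recalled after Definition \ref{dfn:integ}, integrability of $(\pi_\lambda,V_\lambda)$ forces the $K$-finite, and hence all smooth, matrix coefficients of $\pi_\lambda$ to be rapidly decreasing, in particular to lie in $L^1(G)$. For a smooth vector $f$ the function $x\mapsto L_S(h,x)f(x)$ is then controlled by such an $L^1$ coefficient (and $f$ is in any case bounded, by the Remark), so $\int_G L_S(h,x)f(x)\,dx$ converges absolutely; testing against compactly supported $g\in L^2(H\times_\nu E)$ and interchanging the order of integration in the weak-integral identity for $T^\star$ identifies this integral with $(Sf)(h)$ almost everywhere. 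Thus $S$ restricted to the smooth vectors is an integral map.

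The main obstacle is the convergence $\sum_j\|\psi_e^{(j)}\|^2<\infty$ in (a): equivalently, that point evaluation composed with $S$ is Hilbert--Schmidt, or that the image of $S$ is a genuine reproducing kernel subspace of $L^2(H\times_\nu E)$ rather than merely a Hilbert sum of discrete series. Since $H$-equivariance forces these norms to be independent of $h$, the sum is either finite for every $h$ or infinite for every $h$, so no almost-everywhere escape is possible and discrete decomposability must be used substantively here; I would establish the bound from uniform estimates on the reproducing kernels of the pieces $\mathcal H_j$ restricted to $H$, in the spirit of Wallach's work \cite{Wa2}. In part (b) the analogous but milder difficulty is the domination of $x\mapsto L_S(h,x)$ by a fixed $L^1(G)$ function, which is exactly what the integrability estimate of \cite{HS} supplies once one restricts to smooth (hence bounded) vectors.
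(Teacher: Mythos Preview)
Your framework is correct: $S$ is the formal adjoint of the Carleman map $T=S^\star$, and the issue in both parts is whether the candidate kernel $K_T(h,x)^\star$ actually computes $S$. But both arguments have real gaps, and in each case the paper's route is different and shorter.

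\textbf{Part (a).} You decompose the \emph{source} $V_\lambda=\widehat\bigoplus_j\mathcal H_j$ into (in general infinitely many) $H$-irreducibles and then face the unresolved convergence $\sum_j\|\psi_e^{(j)}\|^2<\infty$, which you yourself flag as the main obstacle. The paper avoids this entirely by looking at the \emph{target}: by Schur's lemma, $S(V_\lambda)$ lands in $L^2(H\times_\nu E)_{disc}$, and a result of Harish-Chandra (an irreducible $L$-type occurs in only finitely many discrete series of $H$), combined with Frobenius reciprocity, forces $L^2(H\times_\nu E)_{disc}$ to be a \emph{finite} sum of discrete series. Each summand is an $L^2$-eigenspace of the elliptic Casimir operator of $\h$, so the whole of $L^2(H\times_\nu E)_{disc}$ is a reproducing kernel space. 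Point evaluation is then continuous on the image of $S$, and $S$ is Carleman immediately---no infinite summation over $j$ is needed. Your convergence obstacle is an artifact of decomposing on the wrong side.

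\textbf{Part (b).} Your claimed domination is not justified: $L_S(h,x)w=(S\,\pi_\lambda(x)\Phi_w)(h)$ is \emph{not} a matrix coefficient of $\pi_\lambda$; it involves applying $S$ and then a point evaluation on $H$, neither of which is controlled by the Hecht--Schmid estimate. The paper uses two different ingredients: (i) Proposition~\ref{prop:propertieskt}\,e), which gives a uniform bound $\sup_{x\in G}\|K_T(\cdot,x)^\star\|_{L^2(H)}<\infty$; and (ii) the fact that in an integrable discrete series every smooth vector $f$ lies in $L^1(G)$ (one writes $f$ as a convolution of a Schwartz function with a $K$-finite, hence $L^1$, vector, using $\mathscr S(G)$-irreducibility of $V_\lambda^\infty$). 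Then Minkowski's integral inequality for $p=2$ gives
\[
\Bigl\|\int_G K_T(\cdot,x)^\star f(x)\,dx\Bigr\|_{L^2(H)}\le\int_G\|K_T(\cdot,x)^\star\|_{L^2(H)}\,\|f(x)\|_W\,dx<\infty,
\]
so the integral defines an element of $L^2(H\times_\nu E)$ which then agrees with $Sf$ by the weak identity you already have. The crucial missing step is the uniform $L^2(H)$-bound on the kernel in the second variable; integrability enters through $\|f(\cdot)\|_W\in L^1(G)$, not through any decay of $L_S(h,\cdot)$.
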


We defer  the proof of Theorem~\ref{prop:symbrea} to section 9.
 \begin{examp} An application for  Theorem~\ref{prop:symbrea} is: for a continuous $H-$map, we write the polar decomposition for $T=VP : L^2(H\times_\sigma Z)\rightarrow \,H^2(G,\tau)$ where $P=\sqrt{T^\star T}$ and $V :L^2(H \times_\sigma Z) \rightarrow \,H^2(G,\tau)$ is a partial isometry. $V$ is usually called a generalized Bargmann transform.  Then, $V$ is an integral map, and,  whenever $\pi_\lambda^G$ is an integrable discrete series, the linear map $V^\star$,  as well as $T^\star$, restricted to   the subspace of smooth vectors in $\,H^2(G,\tau)$, is equal to  an integral map. A particular case of this is $T=r^\star$,  where $r$ is the restriction map $r  :\,H^2(G,\tau) \rightarrow L^2(H\times_{res_L(\tau)} W)$ as in Example~\ref{examp:rn}. Here, $r^\star =V \sqrt{rr^\star}$, $r=V^\star \sqrt{r^\star r}$,  and $r, r^\star, rr^\star, r^\star r, \sqrt{r^\star r}, \sqrt{rr^\star}   $   are integral maps (cf. \cite{He}). Theorem~\ref{prop:symbrea} shows that   when $\pi_\lambda$ is integrable, $V^\star$  is a kernel map on the subspace of smooth vectors.  For $G=SU(1,1), H=A$, in Example~\ref{examp:sl2}, we verify  $V^\star$ restricted to the subspace of $K-$finite vectors is  an integral map, despite $\pi_\lambda $ is not an integrable representation. \end{examp}

\subsection{Operators from $L^2(H\times_\nu E)$ into $\,H^2(G,\tau)$} In this somewhat technical  subsection we assemble   properties of the Carleman matrix kernel that represent  a $H-$map into $\,H^2(G,\tau)$ and its adjoint.
\begin{prop} \label{prop:propertieskt}  Let   $T : L^2(H\times_\nu E)\rightarrow \,H^2(G,\tau)$ be a  continuous intertwining linear $H-$map. Then, the function $K_T$ satisfies:\\
	a) $K_T(h,x)^\star w =T^\star (y \mapsto K_\lambda (y,x)^\star w)(h).$\\
	b) The function $h  \mapsto K_T(h,e)^\star w$ is an $L-$finite vector in $L^2(H\times_\nu E).$\\
	c) $K_T$ is a smooth map. Further, $K_T(\cdot,x)^\star w$ is a smooth vector. \\
	d) There exists a constant $C$ and finitely many functions $\phi_{a,b} : G\rightarrow \C$ so that for every $x \in G$,
	$\Vert K_T(e,x)^\star \Vert_{Hom_\C (W,Z)}  \leq C \Vert T^\star \Vert   \sum_{a,b} \vert  \phi_{a,b}(x)\vert . $ \\
	e) The function  $ G \ni x \mapsto \Vert K_T(\cdot,x)^\star \Vert_{L^2(H \times_{\tau^\star \otimes\nu } Hom_\C (W,E))} $ is bounded.\\
	f) $K_T (hh_1s, hxk)=\tau(k^{-1})K_T(h_1,x)\nu(s), \, x\in G, s \in L, h,h_1 \in H, k\in K.$\\ g) If $T^\star$ is a kernel map, with kernel $K_{T^\star} : G\times H \rightarrow Hom(W,E)$ and $K_{T^\star}(\cdot,h)^\star z \in \,H^2(G,\tau).$ Then, $L_D^{(2)} K_T(h,\cdot) =\chi_\lambda (D) K_T(h,\cdot)$ for every $D$ in the center of $U(\g).$ Here, $\chi_\lambda $ is the infinitesimal character of $\pi_\lambda.$
\end{prop}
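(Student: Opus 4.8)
The plan is to reduce every item to the single identity (a) together with the structural observation that the ``column at the identity'' $v_w:=K_\lambda(\cdot,e)^\star w$ spans the lowest $K$-type of $\,H^2(G,\tau)$ and that, by the invariance $K_\lambda(y,x)=K_\lambda(x^{-1}y,e)$ recorded in the Remark, $K_\lambda(\cdot,x)^\star w=\pi_\lambda(x)v_w$. Indeed, from $f(xk)=\tau(k)^{-1}f(x)$ one gets $(\pi_\lambda(k)f)(e)=f(k^{-1})=\tau(k)f(e)$, so the evaluation $e_e$ intertwines $\pi_\lambda|_K$ with $\tau$; taking adjoints, $\pi_\lambda(k)v_w=v_{\tau(k)w}$, whence $\{v_w:w\in W\}$ is a finite-dimensional $K$-invariant subspace and each $v_w$ is $K$-finite, hence a smooth (even analytic) vector. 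For (a) itself I would pair $Tg$ against the reproducing kernel: since $Tg\in \,H^2(G,\tau)$, the reproducing property yields $(Tg(x),w)_W=(Tg,K_\lambda(\cdot,x)^\star w)_{H^2(G,\tau)}=(g,T^\star(K_\lambda(\cdot,x)^\star w))_{L^2(H\times_\nu E)}$, and comparison with the Carleman representation $(Tg(x),w)_W=(g,\,h\mapsto K_T(h,x)^\star w)$ of the Fact forces $K_T(h,x)^\star w=T^\star(K_\lambda(\cdot,x)^\star w)(h)$.

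Items (b), (e), (c) then follow by transporting structure of $v_w$ through $T^\star$. For (b), the span $\{v_w\}$ is a finite-dimensional $L$-invariant (being $K$-invariant) space and $T^\star$ is an $L$-map, so its image is $L$-finite. For (e), unitarity of $\pi_\lambda(x)$ gives $\Vert K_T(\cdot,x)^\star w\Vert_{L^2(H\times_\nu E)}=\Vert T^\star\pi_\lambda(x)v_w\Vert\le \Vert T^\star\Vert\,\Vert v_w\Vert$ uniformly in $x$; summing over an orthonormal basis of $W$ bounds the Hilbert--Schmidt norm in the statement. For (c) I would use that the orbit map $x\mapsto\pi_\lambda(x)v_w$ is smooth into the Fr\'echet space of $G$-smooth vectors (because $v_w$ is smooth), that $T^\star$ restricts to a continuous map of $H$-smooth vectors (a $G$-smooth vector being $H$-smooth), and that the $H$-smooth vectors of $L^2(H\times_\nu E)$ embed continuously into smooth sections by elliptic regularity; composing these yields joint smoothness of $(h,x)\mapsto K_T(h,x)$ and smoothness of the vectors $K_T(\cdot,x)^\star w$.

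For (f) I would read off three separate covariances and combine them. The $H$-equivariance of $T$ gives $K_T(h_0h,h_0x)=K_T(h,x)$; the transformation law $f(xk)=\tau(k)^{-1}f(x)$ of $\,H^2(G,\tau)$ gives $K_T(h,xk)=\tau(k^{-1})K_T(h,x)$; and the fact (from the Fact) that $h\mapsto K_T(h,x)^\star w$ lies in $L^2(H\times_\nu E)$ gives $K_T(hs,x)=K_T(h,x)\nu(s)$ for $s\in L$. Substituting $h_0=h$ and applying these in turn to $K_T(hh_1s,hxk)$ produces the stated formula. For (g), the hypothesis that $T^\star$ is a kernel map with $K_{T^\star}(\cdot,h)^\star z\in \,H^2(G,\tau)$ is precisely the assertion that $x\mapsto K_T(h,x)z$ belongs to $\,H^2(G,\tau)$; since the $G$-action there is left translation $\pi_\lambda=L$ and the representation has infinitesimal character $\chi_\lambda$, every such element satisfies $L_Df=\chi_\lambda(D)f$ for $D\in Z(U(\g))$, which is exactly the claim with $L_D$ acting in the second variable.

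The main obstacle is (d), with the topological bookkeeping of (c) a secondary difficulty. For (d) I would begin from the Sobolev/elliptic estimate $\Vert u(e)\Vert_E\le C\sum_j\Vert \pi_H(D_j)u\Vert_{L^2}$ for $H$-smooth vectors $u$, where a finite family $D_j\in U(\h)$ suffices by elliptic regularity. Applying this to $u=T^\star\pi_\lambda(x)v_w$ and using the intertwining $\pi_H(D_j)T^\star=T^\star\pi_\lambda(D_j)$, unitarity, and $\pi_\lambda(D_j)\pi_\lambda(x)=\pi_\lambda(x)\pi_\lambda(\mathrm{Ad}(x^{-1})D_j)$, I would expand $\mathrm{Ad}(x^{-1})D_j=\sum_a\phi_{j,a}(x)Y_a$ in a fixed basis $\{Y_a\}$ of a finite-dimensional subspace of $U(\g)$, whose coefficients $\phi_{j,a}$ are fixed polynomials in entries of $\mathrm{Ad}$, independent of $T$, $x$ and $w$; this gives $\Vert K_T(e,x)^\star\Vert\le C\Vert T^\star\Vert\sum_{j,a}|\phi_{j,a}(x)|$. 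The delicate point, where I expect to spend the most care, is guaranteeing that both the constant and the family $\{\phi_{j,a}\}$ are genuinely independent of $x$ and of $T$, i.e. that the elliptic estimate is uniform and the adjointness relations persist on the relevant smooth subspaces.
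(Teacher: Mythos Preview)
Your approach is essentially the paper's: both derive everything from (a) by writing $K_T(\cdot,x)^\star w=T^\star\bigl(K_\lambda(\cdot,x)^\star w\bigr)$ and exploiting that $K_\lambda(\cdot,x)^\star w=\pi_\lambda(x)v_w$ with $v_w$ a $K$-finite vector. Parts (a), (b), (c), (f) match the paper's arguments almost line by line; for (c) and (d) the paper invokes Poulsen's Sobolev inequality $\Vert f(e)\Vert_E\le\sum_b\Vert L_{X_b}f\Vert_{L^2(H)}$ (with the $X_b$ a fixed basis of the filtration piece $U_{\dim\h}(\h)$) exactly as you anticipated, and then expands $\mathrm{Ad}(x^{-1})X_b=\sum_a\phi_{a,b}(x)Y_a$ in a fixed basis of $U_{\dim\h}(\g)$---so your worry about uniformity is resolved simply because the estimate is applied at the single point $e$ with a fixed finite list of $X_b$.

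Two points where you diverge from the paper, both in your favor. For (e) the paper runs a longer bootstrap chain of inequalities starting from $\int_H\Vert K_T(h,x)^\star w\Vert^2\,dh=(T(K_T(\cdot,x)^\star w)(x),w)_W$ and applying Cauchy--Schwarz; your one-line argument $\Vert T^\star\pi_\lambda(x)v_w\Vert\le\Vert T^\star\Vert\,\Vert v_w\Vert$ via unitarity of $\pi_\lambda(x)$ is cleaner and gives the same bound. For (g) the paper's written proof in Section~9 actually stops after (e) without treating (g); your argument (that $K_{T^\star}(\cdot,h)^\star z=K_T(h,\cdot)z\in\,H^2(G,\tau)$, on which $L_D$ acts by $\chi_\lambda(D)$) is the intended one and fills that gap.
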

{\it   Note.} The functions $\phi_{a,b}$ are defined as follows. We fix  linear basis $\{X_b\}_{1\leq b \leq N}$ (resp. $\{Y_a\}_{1\leq a \leq M}$) for the space of elements in $U(\h)$ of degree less or equal than $\dim \h$ (resp.  for the space of elements in $U(\g)$ of degree less or equal than $\dim \h$). Then $\phi_{a,b}$ are defined by $Ad(x^{-1})(X_b) =\sum_{1 \leq a \leq M} \phi_{a,b}(x) Y_a, b=1,\cdots ,N.$

We postpone the verification of Proposition~\ref{prop:propertieskt} to section 9.

\begin{rmk}\label{rmk:structS} We assume $S :\,H^2(G,\tau) \rightarrow L^2(H\times_\nu E)$ is a continuous symmetry breaking operator represented by a Carleman kernel $K_S$. Thus, owing to Proposition~\ref{prop:propertieskt} $S^\star$ is a Carleman map. Hence,  $K_S(x,h)=K_{S^\star}(h,x)^\star$  and hence we may conclude:\\ a) $K_S$ is a smooth map.\\
	b) $K_S(hxk,hh_1s)=\nu(s^{-1})K_S(s,h_1)\tau (k), h,h_1 \in H, x\in G, s \in L.$\\
	c) For $z \in E$, the function $G \ni x \mapsto K_S(x,e)^\star z \in W $ belongs to $\,H^2(G,\tau)$ and it is a $L-$finite function.
\end{rmk}

\begin{rmk} Any continuous $H-$map from $\,H^2(G,\tau)$ into $H^2(H,\sigma)$ or vice versa,  always, it is  represented by a smooth Carleman kernel that enjoys the properties in Remark~\ref{rmk:structS} or Proposition~\ref{prop:propertieskt}. We also show, in Theorem~\ref{prop:diffop}, that symmetry breaking operators  can be represented by differential operators under the hypothesis of $res_H(\pi_\lambda)$ being $H-$admissible. \\ With respect to compute the dimension of the space of symmetry breaking operators, we refer to \cite{DV}, \cite{DGV}, \cite{GW}, \cite{N} and references therein. Quite often, the dimension of the space of symmetry breaking operators is large, (cf. \cite{DGV}) and references therein.
\end{rmk}
\subsection{Generalized Shintani functions}
Kobayashi in \cite{K5} has began  deep analysis  of functions in relation  to $H-$intertwining linear operators $R :V_G \rightarrow V_H$  between two smooth irreducible and class one representations  $(\pi_G, V_G)$ $(\pi_H, V_H)$ for $G,$ $H$ respectively.  For this, he fixes  $v_G \in V_G, v_H \in V_H$ nonzero vectors fixed by   $K,  L$ respectively, and he defines the Shintani function $S(g)=(R(\pi_G(g)v_G),v_H)_{V_H}, g \in G.$ The function $S$ satisfies:  right invariant under $L$; left invariant under $K$; eigenfunction for $R_D, D \in \z(U (\g))$;  eigenfunction for $L_D,  D \in \z(U (\h)).$ Here, $\z(U (\g))$ denotes the center of the universal enveloping algebra for $\g.$ Let $S: \,H^2(G,\tau) \rightarrow H^2(H,\sigma)$ a continuous intertwining linear map. Since $H^2(H,\sigma)$ is a reproducing kernel subspace, $S$ is an integral map represented by a kernel $K_S : G\times H \rightarrow Hom_\C(W,Z)$ that satisfies the statements in Remark~\ref{rmk:structS}.  We  now verify  that properties of the  function $y \mapsto K_S(y,e)$ suggest  a natural generalization for the concept of Shintani function. In fact, we set $\Phi(y):=K_S(y,e), y \in G.$ Thus, $\Phi$ maps $G$ into $Hom_\C (W,Z)$. Among the properties of the function $\Phi$ are:
\begin{enumerate}
\item $\Phi$ is a smooth function.
  \item  $\Phi(syk)= \sigma(s) \Phi(y) \tau(k), s \in L, k \in K, y \in G.$
  \item $R_D \Phi = \chi_\lambda (D) \Phi $ for $D \in  \z(U(\g)).$
  \item  $L_D \Phi = \tilde{\chi}_\mu (D) \Phi$ for $D \in  \z(U(\h)).$
  \item $\Phi $ as well as its restriction to $H$ are square integrable functions.
\end{enumerate}
The previous considerations let us  define a {\it generalized  Shintani function}. For this, we fix representations $(\tau,W)$ of $K$,  $(\sigma, Z)$ of $L$ and infinitesimal characters $\chi_\lambda, \chi_\mu$ for   $\g, \h$ respectively. {\it A generalized Shintani function} is  a function $\Phi : G\rightarrow Hom_\C(W,Z) $ that satisfies the five  conditions enumerated in the previous paragraph.
When we dealt with discrete series representations, the trivial representation of a maximal compact subgroup of the ambient group, never occurs as a $K-$type. However, it is clear that the first four stated properties   are a generalization of the concept of Shintani's function. The obvious result is:  {\it  the space of generalized Shintani functions attached to $(\tau, W), (\sigma ,Z), \chi_\lambda, \chi_\mu$ is isomorphic to the space of continuous $H-$maps from $V_\lambda^G $ to $V_\mu^H.$}
 \section{Intertwining operators via differential operators} For this section,   let $G,K,H,L, W, \pi_\lambda,   \,H^2(G,\tau) $ be  as in Section~\ref{sec:prelim}. Besides, we fix   a finite dimensional representation $(\nu, E)$ for $L$. In \cite{N}, \cite{KobPev}, \\ \cite{K6},  the authors have constructed $H-$intertwining maps between holomorphic discrete series by means of differential operators. Some of these authors also considered the case of intertwining maps between two principal series representations \cite{KobSp1}, \cite{KobSp2}. Motivated by the fact that discrete series can be modeled as function spaces,  an  aim of this section is to analyze to what extent symmetry breaking linear maps agree with restriction of linear differential operators. In \cite{K6},  is presented a general conjecture on the subject, we present a partial solution to the conjecture for the particular case of discrete series representations.
\subsubsection{  Differential Operators.}
For the purpose of this paper {\it a  differential operator} is a linear map  $S : C^\infty (G \times_\tau W ) \rightarrow C^\infty (H\times_\nu E)$ so that there exists   finitely many elements  $D_b \in U(\g),$   a basis $ \{w_c \} \, \text{for}\, W,$   finitely \, many \, el\-e\-ments $ \{z_a \} \, \text{in}\,  E,$ $d_{a,b,c} \in \C,$ and for any  $f\in C^\infty (G\times_\tau W)$ we have \begin{equation} \label{eq:defdop} S(f)(h)= \sum_{a,b,c} d_{a,b,c}\, ([R_{D_b}f](h),w_c)_W\, z_a \,\, \forall h \in H. \end{equation}
  Sometimes we will allow the constants $d_{a,b,c}$ to be smooth functions on $H.$ \\  The definition is motivated by the following result of Kobayashi, for a proof (cf.  \cite[Theorem 2.9]{KobPev}).  They present  an isomorphism between  the space $Hom_L (E^\vee , Ind_\k^\g(W^\vee))$ and  the set of $H-$invariant differential operators from $C^\infty (G\times_\tau W) $ into $C^\infty (H\times_\nu E)$. As usual, $M^{\vee}$ denotes the contragredient representation. The isomorphism is: \begin{multline*} \sum_{a,b,c} d_{a,b,c}\, z_a \otimes D_b \otimes w_c \\ \mapsto (C^\infty (G\times_\tau W) \ni f \mapsto (h \mapsto \sum_{a,b,c} d_{a,b,c}\, ([R_{D_b}f](h),w_c)_W\, z_a)). \end{multline*} Here, $d_{a,b,c} \in \C, \{z_a\} \, \text{is basis for}\, E, D_b \in U(\g),  \{w_c\} \, \text{is basis  for} \, W, h \in H   .$\\
    It is obvious that a differential operator according to our definition,  with $d_{a,b,c}  \in \C \,\, \forall a,b,c$  is represented  by an  element of $Hom_L (E^\vee , Ind_\k^\g(W^\vee))$.
 \begin{examp} \label{examp:rn} Examples of differential operators are the normal derivatives  considered in \cite{OV}. For this,
	we write the Cartan decomposition as  $\g=\k +\p$ and $\h =\l +\p^\prime$. We have $\p^\prime =\p \cap \h.$  Let $(\p/\p^\prime)^{(n)} $ denote the $n-$th symmetric power of the orthogonal  to $\p^\prime$ with respect to the Killing form in $\p.$ Let $\tau_n$ denote the natural representation of $L$ in $Hom_\C ( (\p/\p')^{(n)} , W) $. Let $\lambda : S(\g) \rightarrow U(\g)$ denote the symmetrization map. Then,  for each $ D \in (\p/\p')^{(n)},$ $f \in \,H^2(G,\tau), h \in H$ we compute the normal derivative of $f$ in the direction $D$ at the point $h,$  $r_n(f)(D)(h) :=R_{\lambda (D)} f(h) .$ In \cite{OV}, it is shown that $r_n(f) \in L^2(H\times_{ \tau_n}  Hom_\C ( (\p/\p^\prime)^{(n)}, W))$  and   the resulting map $$r_n : \,H^2(G,\tau) \rightarrow L^2(H \times_{\tau_n}  Hom_\C ( (\p/\p^\prime)^{(n)}, W))$$ is  $H-$equivariant and continuous for $L^2-$topologies. As before, $K_\lambda$ is the matrix kernel of  $P_\lambda$.
	The map $r_n$ is represented by the matrix kernel $$K_{r_n} : G\times H \rightarrow Hom_\C ( W, Hom_\C ((\p/\p^\prime)^{(n)}, W)) $$ given by $$ K_{r_n}(y,h)(w,D) =R_{\lambda (D)}^{(2)}(K_\lambda (y,h)w).  $$ Here, the upper index $(2)$ means we compute the derivative of $K_\lambda$ on the second variable.
\end{examp}

\subsection{Symmetry breaking via differential operators}

In this subsection we show the equivalence between $res_H(\pi_\lambda)$ being a $H-$admissible representation and that every symmetry breaking operator from $\,H^2(G,\tau)$ into an arbitrary representation  $H^2(H,\sigma)$  is the restriction of a differential operator. For this, (cf. Lemma~\ref{prop:diffopiskern}), we characterize those symmetry braking operators that are restriction of differential operators. Next, we formulate the main result in Theorem~\ref{prop:diffop}.   We recall (cf.  Theorem~\ref{prop:symbrea} and \ref{rmk:structS} c))  we have shown  that if
$res_H(\pi_\lambda)$ is a $H-$discretely decomposable representation, then any symmetry breaking operator $S$ is a Carleman integral map satisfying:    for every $z\in E$, $y\mapsto K_S(y,e)^\star z$ is a $L-$finite vector for $\pi_\lambda.$
  \begin{lem}\label{prop:diffopiskern} $G,H, \,H^2(G,\tau), L$ are  as in Section~\ref{sec:prelim}. $(\nu,E)$ is a finite dimensional representation for $L$. Let $S : \,H^2(G,\tau) \rightarrow L^2(H\times_\nu E)$ be  a  not necessarily
  continuous intertwining  $H-$map   such that  $S$ is
	the restriction of a differential operator. Then,\\
	a) $S$ is a Carleman map. That is,   there exists  $K_S :G\times H \rightarrow Hom_\C (W,E)$ so that $y \mapsto
K_S(y,h)^\star z \in \,H^2(G,\tau) \,\text{for}\, h \in H, z \in E$ and for $\, f \in \,H^2(G,\tau)$ \\
	  $(S(f)(h),z)_Z =\int_G (f(y), K_S(y,h)^\star z)_W dy \,  z \in E.$\\
	b) For every $z\in E$, $y\mapsto K_S(y,e)^\star z$ is a $K-$finite vector for $\pi_\lambda.$\\
	c) $K_S$ is a smooth function.\\
	d) $S$ is continuous in $L^2-$topologies. \\
	Conversely, if $S : \,H^2(G,\tau) \rightarrow L^2(H\times_\nu E)$ is a Carleman $H-$map so that  for each $z \in E$, $y\mapsto
K_S(y,e)^\star z$ is a $K-$finite vector for $\pi_\lambda.$ Then,    $S$ is the restriction
of a differential operator and  $S$ is continuous in $L^2-$topologies.
\end{lem}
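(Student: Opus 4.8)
The plan is to prove both implications by exhibiting the Carleman kernel of $S$ explicitly in terms of the reproducing kernel $K_\lambda$ of \eqref{eq:Klambda}, and by matching the $K$-finiteness hypothesis with membership of the relevant right derivatives of $K_\lambda$ in the Harish-Chandra module of $\pi_\lambda$. For the forward direction I would start from the defining expression \eqref{eq:defdop}, namely $S(f)(h)=\sum_{a,b,c}d_{a,b,c}([R_{D_b}f](h),w_c)_W\,z_a$, and insert the reproducing identity $f(x)=\int_G K_\lambda(y,x)f(y)\,dy$ valid on $\,H^2(G,\tau)$. Since $K_\lambda$ is smooth and, by the estimates of the Remark of Section~\ref{sec:prelim} and Appendix~\ref{subsec:A4}, the orbit map $x\mapsto K_\lambda(\cdot,x)$ is smooth into $\,H^2(G,\tau)$ with continuous point evaluations, I would differentiate under the integral sign to get $[R_{D_b}f](h)=\int_G[R_{D_b}^{(2)}K_\lambda](y,h)f(y)\,dy$. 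Reading off coefficients yields the candidate kernel $K_S(y,h)w=\sum_{a,b,c}d_{a,b,c}([R_{D_b}^{(2)}K_\lambda](y,h)w,w_c)_W\,z_a$, which gives the integral formula of (a) and, by smoothness of $K_\lambda$, the smoothness of $K_S$ in (c).

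The crux is to show that each column $y\mapsto[R_{D_b}^{(2)}K_\lambda](y,h)^\star w$ lies in $\,H^2(G,\tau)$ and, at $h=e$, is $K$-finite. Writing $\phi_{x,w}(y):=K_\lambda(y,x)^\star w=e_x^\star w$ for the reproducing vectors, the $K$-covariance $e_{xk}=\tau(k)^{-1}e_x$ gives $\phi_{xk,w}=\phi_{x,\tau(k)w}$, so that $\{\phi_{e,w}:w\in W\}$ is a $K$-submodule of $\,H^2(G,\tau)$ isomorphic to the lowest $K$-type $(\tau,W)$; in particular each $\phi_{e,w}$ is $K$-finite, hence an analytic vector of the admissible representation $\pi_\lambda$. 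The left invariance $K_\lambda(y,x)=K_\lambda(x^{-1}y,e)$ then identifies $\phi_{x,w}=\pi_\lambda(x)\phi_{e,w}$, and differentiating the orbit map gives $[R_{D_b}^{(2)}K_\lambda](\cdot,x)^\star w=\pi_\lambda(x)\,d\pi_\lambda(D_b)\phi_{e,w}\in\,H^2(G,\tau)$; evaluating at $x=e$ produces $d\pi_\lambda(D_b)\phi_{e,w}$, which is $K$-finite because the $K$-finite vectors of an irreducible admissible representation form a $U(\g)$-stable module. This establishes the membership statement in (a) together with (b).

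For continuity (d) I would avoid kernel estimates and invoke the closed graph theorem. The operator $S$ is everywhere defined on the Hilbert space $\,H^2(G,\tau)$ and, by hypothesis, takes values in $L^2(H\times_\nu E)$. To see that $S$ is closed, suppose $f_n\to f$ in $\,H^2(G,\tau)$ and $Sf_n\to g$ in $L^2(H\times_\nu E)$. Ellipticity of $R_{\Omega_G}=L_{\Omega_G}$, already used in Section~\ref{sec:prelim}, furnishes interior elliptic estimates, so convergence in the $L^2$ solution space $\,H^2(G,\tau)$ forces $C^\infty$-convergence on compacta; hence $R_{D_b}f_n\to R_{D_b}f$ locally uniformly and $Sf_n(h)\to Sf(h)$ for every $h\in H$. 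Comparing with the $L^2$-limit $g$ gives $Sf=g$, so $S$ is closed and therefore bounded.

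Finally, for the converse, set $\phi_z:=K_S(\cdot,e)^\star z\in\,H^2(G,\tau)$, which is $K$-finite by hypothesis and depends $L$-equivariantly on $z$ through the covariance of $K_S$ in Remark~\ref{rmk:structS}. Since the lowest $K$-type $\{\phi_{e,w}\}$ is cyclic in the irreducible Harish-Chandra module, I may expand $\phi_z=\sum_{b,c}c_{b,c}(z)\,d\pi_\lambda(D_b)\phi_{e,w_c}$ for suitable $D_b\in U(\g)$. Comparing this with the forward-direction formula for the kernel of a differential operator lets me read off constants $d_{a,b,c}\in\C$ defining an operator $S'$ as in \eqref{eq:defdop}; the $L$-equivariance of $z\mapsto\phi_z$ makes the resulting symbol $L$-invariant, so $S'$ is genuinely $H$-equivariant in the sense of Kobayashi's isomorphism with $Hom_L(E^\vee,Ind_\k^\g(W^\vee))$. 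By construction the Carleman kernels of $S$ and $S'$ agree at $h=e$, and since both maps are $H$-equivariant this identity propagates to all $h\in H$, exhibiting $S$ as the restriction of a differential operator, whose continuity then follows from part (d). I expect the main obstacle to be precisely the second paragraph: controlling the behaviour of the reproducing kernel under the right regular action and verifying that its right derivatives remain inside the Harish-Chandra module, that is, that the vectors $\phi_{e,w}$ are smooth and generate under $U(\g)$ exactly the $K$-finite part of $\pi_\lambda$; the rigorous justification of the differentiation under the integral sign via the locally uniform estimates is the remaining delicate point.
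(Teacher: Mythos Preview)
Your proposal is correct and follows essentially the same strategy as the paper: exhibit the Carleman kernel through the reproducing kernel $K_\lambda$, identify $K_S(\cdot,e)^\star z$ with $U(\g)$-translates of the lowest $K$-type vectors $K_\lambda(\cdot,e)^\star w$, and deduce continuity via the closed graph theorem. The converse in both cases rests on the $U(\g)$-irreducibility of $(V_\lambda)_{K\text{-fin}}$ to write $K_S(\cdot,e)^\star z$ as $L_D K_\lambda(\cdot,e)^\star w$ and then read off the differential operator.

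There are two presentational differences worth flagging. First, the paper does not differentiate under the integral sign; instead it invokes Riesz to produce $K_S$ abstractly, and then, for smooth vectors $f$, moves the derivative from $f$ to $K_\lambda$ via the adjoint relation $\int_G(L_D f, g)=\int_G(f, L_{D^\star} g)$ together with $R_D f(e)=L_{\check D}f(e)$, obtaining $K_S(y,e)^\star z_j=\sum_{b,i}d_{j,b,i}\,L_{\check{D_b^\star}}^{(1)}K_\lambda(y,e)^\star w_i$. This sidesteps the star/conjugation bookkeeping that your orbit-map formula $[R_{D_b}^{(2)}K_\lambda](\cdot,x)^\star w=\pi_\lambda(x)\,d\pi_\lambda(D_b)\phi_{e,w}$ requires (the $(\cdot)^\star$ does not commute with $R_D$ for complex $D$, so one must track $\bar D$ or $D^\star$ carefully). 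Second, for continuity the paper argues that a Carleman kernel defines a closed operator on its maximal domain and that this domain contains the closed subspace $H^2(G,\tau)$; your route via elliptic interior regularity (so that $L^2$-convergence in $H^2(G,\tau)$ implies $C^\infty$-convergence on compacta, hence pointwise convergence of $Sf_n$) is equally valid and is in fact the mechanism the paper uses elsewhere when extending identities from smooth vectors to all of $H^2(G,\tau)$.
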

    We defer the proof of Lemma~\ref{prop:diffopiskern} till we
formulate the main  results for this subsection.

\begin{thm}\label{prop:diffop} We recall $G$ is a semisimple matrix Lie group, $(\pi_\lambda, \,H^2(G,\tau))$ is an irreducible square integrable representation for $G$,  $ H$ is a closed, equal rank,  reductive subgroup of $G$ and $L$ is a maximal compact subgroup for $H$.  We have:   $res_H(\pi_\lambda)$ is an $H-$admissible representation if and only if for every irreducible representation $(\sigma, Z) $ of $L$, it holds that any continuous $H-$map from  $ \,H^2(G,\tau)$ into $H^2(H,\sigma)$ is the restriction of a differential operator.
\end{thm}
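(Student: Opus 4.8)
The plan is to reduce both implications to the characterization of differential operators furnished by Lemma~\ref{prop:diffopiskern}: a continuous $H$-map $S:H^2(G,\tau)\to H^2(H,\sigma)$ is the restriction of a differential operator if and only if its Carleman kernel satisfies that $y\mapsto K_S(y,e)^\star z$ is a $K$-finite vector of $\pi_\lambda$ for every $z\in Z$. Since any continuous $H$-map into the reproducing kernel space $H^2(H,\sigma)$ is automatically an integral map with kernel $K_S$ enjoying the properties of Remark~\ref{rmk:structS}, the theorem is equivalent to the assertion that $res_H(\pi_\lambda)$ is $H$-admissible if and only if every such kernel vector $K_S(\cdot,e)^\star z$ is $K$-finite. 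Throughout I would use Kobayashi's theorem \cite{K1}, \cite{K2}, \cite{K3} that, because $L=H\cap K$ is maximal compact in $H$, $res_H(\pi_\lambda)$ is $H$-admissible precisely when $res_L(\pi_\lambda)$ is $L$-admissible, i.e.\ every $L$-type occurs in $\pi_\lambda$ with finite multiplicity.

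For the forward implication, assume $res_H(\pi_\lambda)$ is $H$-admissible, hence discretely decomposable. By Theorem~\ref{prop:symbrea}(a) any continuous $H$-map $S$ is a Carleman map, and by Remark~\ref{rmk:structS}(c) each kernel vector $y\mapsto K_S(y,e)^\star z$ is an $L$-finite vector of $\pi_\lambda$. It then suffices to prove the elementary lemma that, once $res_L(\pi_\lambda)$ is $L$-admissible, every $L$-finite vector is already $K$-finite. This holds because $\pi_\lambda$ is $K$-admissible, so each $K$-isotypic projector $Q_\mu$ has finite rank, and the $L$-isotypic projector $P_\delta$ commutes with every $Q_\mu$ (both are averages over the compact groups $L\subseteq K$). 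Writing an $L$-finite vector as $v=\sum_{\delta\in F}P_\delta v$ for a finite set $F$ of $L$-types, each nonzero $K$-component $Q_\mu v=\sum_{\delta\in F}P_\delta Q_\mu v$ forces $\mu|_L$ to meet $F$; $L$-admissibility makes the set of such $\mu$ finite, so $v$ is $K$-finite. The converse part of Lemma~\ref{prop:diffopiskern} now shows $S$ is the restriction of a differential operator.

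For the converse I would argue by contraposition and produce, whenever $res_H(\pi_\lambda)$ is not $H$-admissible, a continuous $H$-map into some $H^2(H,\sigma)$ whose kernel is not $K$-finite. By Kobayashi's equivalence, non-admissibility means $res_L(\pi_\lambda)$ is not $L$-admissible, so some $L$-type occurs infinitely often; combined with the $K$-admissibility of $\pi_\lambda$ this forces the $K$-supports of the corresponding $L$-isotypic vectors to be unbounded, since no single $K$-isotypic component of $\pi_\lambda$ can absorb infinitely many of them. Concretely, I fix a discrete irreducible $H$-subrepresentation isomorphic to some $H^2(H,\sigma)$ with multiplicity space $M=\mathrm{Hom}_H(H^2(H,\sigma),V_\lambda)$, and observe that the kernel vectors $K_S(\cdot,e)^\star z$ range, as $S=(S^\star)^\star$ varies, over $S^\star$ applied to the $L$-finite reproducing-kernel vectors of $H^2(H,\sigma)$. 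When the embedded copy fails infinitesimal discrete decomposability — in particular when the multiplicity is infinite — I would choose a diagonal embedding $S^\star=\sum_i a_i\iota_i$ across copies $\iota_i$ whose lowest $L$-type images $\phi_i$ lie in ever higher $K$-types, with $(a_i)\in\ell^2$ selected so that $S^\star$ is bounded while $\sum_i a_i\phi_i$ remains $L$-finite yet spreads over infinitely many $K$-types, hence is not $K$-finite. The adjoint $S$ is then a continuous $H$-map which, by Lemma~\ref{prop:diffopiskern}, is not the restriction of a differential operator.

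The hard part will be this converse construction. Two points require care: first, one must guarantee that a suitable discrete $H$-component into which to map actually exists once admissibility fails, so that the right-hand side is not vacuously satisfied by a purely continuous spectrum; I expect to invoke the nonvanishing of the discrete branching spectrum for equal-rank pairs together with Kobayashi's criterion rather than a naive multiplicity count. Second, the convergence and $H$-equivariance of the diagonal operator $S^\star$, and the verification that the resulting kernel genuinely escapes every finite set of $K$-types, must be checked against the reproducing-kernel estimates of Remark~\ref{rmk:structS} and Proposition~\ref{prop:propertieskt}. The forward implication, by contrast, is essentially formal once the commuting-projector lemma and Kobayashi's $H$- versus $L$-admissibility equivalence are in place.
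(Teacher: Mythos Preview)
Your forward implication matches the paper's: both reduce, via Lemma~\ref{prop:diffopiskern} and Remark~\ref{rmk:structS}, to showing that under $H$-admissibility every $L$-finite vector is $K$-finite. Your commuting-projector argument reproves what the paper simply cites as \cite[Prop.~1.6]{K3} (together with \cite{DV} for the equivalence of $H$- and $L$-admissibility; your attribution of this equivalence to \cite{K1}--\cite{K3} is not quite right).

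The converse, however, is where your plan diverges and contains a genuine gap. The paper does \emph{not} argue the converse by a single contrapositive; it proves two separate statements (Proposition~\ref{prop:adviadiffop}\,i) and ii)). Your construction --- picking an $H^2(H,\sigma)$ with infinite $H$-multiplicity and forming a diagonal embedding $S^\star=\sum a_i\iota_i$ whose kernel vector escapes every finite $K$-support --- is essentially the paper's argument for ii), and it only addresses the case where $res_H(\pi_\lambda)$ is already discretely decomposable but some $H$-multiplicity is infinite. What you are missing is the paper's part~i): from the mere existence of one nonzero $S:H^2(G,\tau)\to H^2(H,\sigma)$ that \emph{is} a differential operator, one deduces discrete decomposability. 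The key observation there is not about $K$-finiteness at all; it is that the kernel vector $K_S(\cdot,e)^\star z$, being $K$-finite and lying in a $\z(U(\h))$-eigenspace (because $S$ lands in $H^2(H,\sigma)$), generates a $(\h,L)$-module of finite length by Harish-Chandra's theorem, and hence $(V_\lambda)_{K\text{-fin}}$ contains an irreducible $(\h,L)$-submodule, forcing infinitesimal discrete decomposability by \cite[Lemma~1.5]{K3}.

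Your attempt to route everything through $L$-admissibility cannot substitute for this. Non-$L$-admissibility gives you an $L$-type with infinite multiplicity in $\pi_\lambda$, but if $res_H(\pi_\lambda)$ fails to be discretely decomposable, that infinite $L$-multiplicity may be carried entirely by continuous spectrum, and there need be no $H^2(H,\sigma)$ occurring with infinite (or even nonzero) multiplicity on which to run your diagonal construction. You flag this worry yourself, but the fix is not ``nonvanishing of the discrete branching spectrum''; it is the $\z(U(\h))$-finiteness step of Proposition~\ref{prop:adviadiffop}\,i), which is logically independent of the multiplicity construction and must be supplied separately.
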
	

Theorem~\ref{prop:diffop} readily follows from  Lemma~\ref{prop:diffopiskern} and the next Proposition.

In \cite[Theorem 3.10]{N},  Nakahama   has shown the direct implication in  Theorem~\ref{prop:diffop} under the hypotheses  $(G,H)$ is a holomorphic symmetric pair and $\pi_\lambda$ is a scalar holomorphic discrete series.

 The proof of Theorem~\ref{prop:diffop} is based in the following fact.
\begin{prop}\label{prop:adviadiffop}	Let $G,K, \,H^2(G,\tau), \pi_\lambda, H,L,   (\nu ,E)$ be as is in Lemma~\ref{prop:diffopiskern}. Let $S : \,H^2(G,\tau) \rightarrow L^2(H\times_\nu E)$ denote an everywhere defined intertwining linear
$H-$map. The following statements holds: \\     If  $res_H(\pi_\lambda)$ is an $H-$admissible representation and
$S$ is continuous. Then  $S$ is  the restriction of a linear differential operator. \\  For a converse statement, we have,\\
	i) If we assume for some $\nu$, that,  some nonzero linear  intertwining $H-$map $S :\,H^2(G,\tau) \rightarrow
L^2(H\times_\nu E)_{disc}$ is the restriction of a linear differential operator. Then,  $res_H(\pi_\lambda)$ is
discretely decomposable.\\
	ii) If  we assume for some irreducible representation $(\sigma ,Z)$ for $L$,    that, every nonzero linear intertwining $H-$map $S :\,H^2(G,\tau)
\rightarrow H^2(H,\sigma)$ is the restriction of a linear differential operator. Then,  the multiplicity of $
(\pi_\mu^H ,H^2(H,\sigma))$ in $res_H (\pi_\lambda)$ is finite.
\end{prop}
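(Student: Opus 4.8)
The plan is to prove the three assertions separately, using Lemma~\ref{prop:diffopiskern} as the dictionary between ``restriction of a differential operator'' and ``Carleman map whose reproducing seeds $y\mapsto K_S(y,e)^\star z$ are $K$-finite vectors of $\pi_\lambda$,'' and Kobayashi's infinitesimal theory \cite{K1},\cite{K2},\cite{K3} as the link to discrete decomposability. \emph{For the direct statement}, suppose $res_H(\pi_\lambda)$ is $H$-admissible (hence discretely decomposable) and $S$ is continuous. Then Theorem~\ref{prop:symbrea}(a) makes $S$ a Carleman map, and Remark~\ref{rmk:structS}(c) shows each seed $y\mapsto K_S(y,e)^\star z$ lies in $\,H^2(G,\tau)$ and is $L$-finite. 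The one external ingredient I would invoke is Kobayashi's structure theorem for admissible restrictions: under $H$-admissibility $\pi_\lambda|_L$ is $L$-admissible and the $K$-finite vectors coincide with the $L$-finite vectors, $(\pi_\lambda)_K=\bigoplus_\mu m_\mu\,(\pi_\mu^H)_L$. Thus each seed is in fact $K$-finite, and the converse half of Lemma~\ref{prop:diffopiskern} immediately gives that $S$ is the restriction of a differential operator.

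\emph{For converse (i)}, let $S\neq0$ be a differential SBO into the discrete part $L^2(H\times_\nu E)_{disc}$. Since $\,H^2(G,\tau)$ is a reproducing kernel space, the adjoint $S^\star\colon L^2(H\times_\nu E)_{disc}\to\,H^2(G,\tau)$ is automatically Carleman (as in the Fact of Section~3). The crucial move is to feed the differential hypothesis into $S^\star$: writing $\xi_z$ for the reproducing element of the target at $e$, a short computation with the two reproducing kernels gives $S^\star(\xi_z)=\big(y\mapsto K_S(y,e)^\star z\big)$, which by Lemma~\ref{prop:diffopiskern}(b) is $K$-finite. As the $\xi_z$ generate, under $U(\h)$, the full space of $L$-finite vectors of $L^2(H\times_\nu E)_{disc}$, and $U(\h)\subset U(\g)$ preserves $K$-finiteness, it follows that $S^\star$ carries every $L$-finite vector of the discrete part into $(\pi_\lambda)_K$. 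Since the discrete part is a Hilbert sum of discrete series of $H$, the image of one nonzero irreducible summand is a nonzero \emph{irreducible}, hence finite length, $(\h,L)$-submodule $Y\subset(\pi_\lambda)_K$. Finally I would use the elementary filtration argument: because $\pi_\lambda$ is $G$-irreducible and $K$ is connected, $U(\g)Y=(\pi_\lambda)_K$, and the subspaces $U(\g)_{\le n}Y$ form an exhausting filtration of $(\pi_\lambda)_K$ by finite-length $(\h,L)$-submodules (each subquotient being dominated by $S^n(\g/\h)\otimes Y$). Hence $(\pi_\lambda)_K$ is discretely decomposable as an $(\h,L)$-module, and by Kobayashi's equivalence \cite{K2} the unitary restriction is discretely decomposable.

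\emph{For converse (ii)}, if the multiplicity $m_\mu$ of $(\pi_\mu^H,H^2(H,\sigma))$ is zero there is nothing to prove; otherwise there is a nonzero SBO into $H^2(H,\sigma)\subset L^2(H\times_\nu E)_{disc}$, which by hypothesis is a differential operator, so part (i) already gives that $res_H(\pi_\lambda)$ is discretely decomposable, together with a family of orthogonal copies $V_i\cong H^2(H,\sigma)$ inside $\,H^2(G,\tau)$ indexed by $m_\mu$. For each copy the orthogonal projection $\,H^2(G,\tau)\to V_i\cong H^2(H,\sigma)$ is a continuous SBO, hence a differential operator, so Lemma~\ref{prop:diffopiskern}(b) makes its seed $K_{S_i}(\cdot,e)^\star z$ a $K$-finite vector; equivalently, the attached generalized Shintani functions $\Phi_i(y)=K_{S_i}(y,e)$ are left $K$-finite, right $\tau$-covariant, $\z(U(\g))$-eigen with $\chi_\lambda$, $\z(U(\h))$-eigen with $\chi_\mu$, and square-integrable, and the correspondence of Section~3.3 makes $S_i\mapsto\Phi_i$ injective. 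Harish-Chandra's finiteness theorem for square-integrable, bi-$K$-finite functions that are eigenfunctions of $\z(U(\g))$ and of $\z(U(\h))$, as developed by Wallach \cite{Wa2}, bounds the dimension of the space of such $\Phi$; hence $m_\mu<\infty$.

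The step I expect to be the main obstacle is the heart of (i): converting the \emph{differential} nature of $S$, as opposed to the mere existence of some nonzero SBO, into a finite-length $(\h,L)$-submodule of $(\pi_\lambda)_K$. A nonzero SBO by itself only produces a discrete summand, and restrictions with genuinely mixed spectrum do occur, so one must really use that the seeds are $K$-finite in order to land $Y$ inside $(\pi_\lambda)_K$; the identity $S^\star(\xi_z)=K_S(\cdot,e)^\star z$ and the cyclicity of the $\xi_z$ are exactly where the differential hypothesis is exploited, and verifying them carefully is the delicate point. The analogous hard point in (ii) is the uniform finite-dimensionality of the space of square-integrable, bi-$K$-finite, bi-$\z$-eigen Shintani functions, which is precisely where the quoted results of Harish-Chandra and Wallach \cite{Wa2} are indispensable.
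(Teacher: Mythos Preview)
Your treatment of the direct statement and of (i) is essentially correct and tracks the paper closely. For the forward implication you use exactly the paper's mechanism: Theorem~\ref{prop:symbrea}(a) and Remark~\ref{rmk:structS} give $L$-finiteness of the seeds, Kobayashi's $L$-admissibility result \cite[Prop.~1.6]{K3} upgrades this to $K$-finiteness, and the converse half of Lemma~\ref{prop:diffopiskern} finishes. For (i) the paper argues slightly differently---it shows directly that $K_S(\cdot,e)^\star z$ is $\z(U(\h))$-finite (because the image of $S$ sits in finitely many Casimir eigenspaces) and then invokes Harish-Chandra's finite composition series theorem---while you instead pass through $S^\star$ and the identity $S^\star(\xi_z)=K_S(\cdot,e)^\star z$ to exhibit an irreducible $(\h,L)$-submodule in $(\pi_\lambda)_K$. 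Your cyclicity claim for the $\xi_z$ is in fact true (if the closure of $U(\h)\{\xi_z\}$ were proper, its orthogonal complement would vanish at every $h\in H$), so your route is a legitimate variant; both feed into \cite[Lemma~1.5]{K3}, which is exactly your filtration argument.

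Part (ii), however, has a genuine gap. The ``finiteness theorem'' you invoke---finite-dimensionality of the space of square-integrable, bi-$K$-finite, $\z(U(\g))$- and $\z(U(\h))$-eigen functions---does not hold with \emph{$K$-finite} in place of \emph{fixed $K$-type}. Indeed, suppose the multiplicity is infinite and choose orthogonal isometric $H$-embeddings $T_r:H^2(H,\sigma)\hookrightarrow H^2(G,\tau)$. It is entirely possible that every $T_r(g_0)$ is $K$-finite (this is precisely one of the two cases the paper must handle); then each $\Phi_r=K_{T_r^\star}(\cdot,e)$ satisfies all five of your conditions, yet the $\Phi_r$ are linearly independent, so your Shintani space is infinite-dimensional. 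No black-box finiteness theorem in \cite{Wa2} controls this, because the left $K$-types of the $\Phi_r$ need not be bounded.

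The paper's argument for (ii) is instead a constructive contradiction. Assuming infinite multiplicity, one fixes $g_0=\iota(z_0)$ in the lowest $L$-type and sets $v_r=T_r(g_0)$. If some $v_r$ is \emph{not} $K$-finite, then $S=T_r^\star$ already violates Lemma~\ref{prop:diffopiskern}(b). If every $v_r$ is $K$-finite, one chooses positive scalars $a_r$ so that $v_0=\sum_r a_r v_r$ converges and is nonzero; by $K$-admissibility of $\pi_\lambda$ this infinite orthogonal sum cannot be $K$-finite. The assignment $T(h\cdot g_0)=\|v_0\|^{-1}h\cdot v_0$ extends to an isometric $H$-embedding, and a direct computation gives $K_{T^\star}(\cdot,e)^\star z_0=v_0/\|v_0\|$, which is not $K$-finite. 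Hence $T^\star$ is a continuous $H$-map into $H^2(H,\sigma)$ that is not the restriction of a differential operator, contradicting the hypothesis. The point is that the hypothesis in (ii) must be used to rule out a \emph{specific} non-differential SBO that one manufactures from infinite multiplicity; it cannot be converted into a dimension bound on a fixed function space.
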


  In Proposition~\ref{prop:adviadiffop}, the hypothesis on image of $S$ is  quite essential. Examples and counterexamples are provided by
$r, r_1, r_2, ...$.

An example for a map $S$ where  Proposition~\ref{prop:adviadiffop} may be  applied,  is the normal map $r_n$ defined in Example~\ref{examp:rn}. In
particular, we obtain: $res_H(\pi_\lambda)$ is discretely decomposable if and only if there exists an $n$ so that the
image of $r_n$ is contained in $L^2(H\times_{\tau_n} Hom_\C ((\p/\p')^{(n)} ,W))_{disc}.$
	
 \begin{proof}[Proof of Lemma~\ref{prop:diffopiskern}]   We fix $\{ z_j, j=1,\cdots, \dim E \}, \{ w_i, i=1, \dots,
\dim W\} $   orthonormal basis for $E,W$ respectively.  Our hypothesis gives for every $f \in \,H^2(G,\tau), h\in H$ the
following equality holds\\ \phantom{xxxxxxxxxxxxxxxxx} $S(f)(h)= \sum_{j,b,i} d_{j,b,i}\, ([R_{D_b}f](h),w_i)_W\, z_j.$ \\ In \cite{At}, we find a
proof that in the $L^2-$kernel of an elliptic operator,  $L^2-$con\-ver\-gence  implies uniform convergence on compact sets  of the
sequence as well as any of its derivatives. Since the Casimir operator acts on $G/K$ as an
elliptic operator, the result on PDE just quoted applies to $\,H^2(G,\tau).$ Hence, the equality $(S(f)(h),z_j)_Z
=\sum_{b,i} d_{j,b,i} (R_{D_{b}}f (h),w_i)_W$ yields the map $ f \mapsto (S(f)(h),z_j)_Z$  is a continuous linear functional
on $\,H^2(G,\tau).$ Thus, owing to Riesz representation Theorem,  there exists a function $K_S : G\times H \rightarrow Hom_\C (W,E)$ so that  $a)$ holds. The hypothesis $S$ is an intertwining map, yields the
equality $K_S(h_1 yk, h_1hs)=\nu(s^{-1})K_S(y,h)\tau (k), h_1,h \in H, y \in G, s \in L, k \in K.$ The smoothness
for  $K_S$  follows from that,  for each $z \in E$, $K_S(\cdot ,\cdot)^\star z$ is equal  to the map  $(y,h)\mapsto h^{-1}y$ followed by the map
$x\mapsto K_S(x,e)^\star z$ and that $v\mapsto K_S(v,e)^\star z$ is an element of $\,H^2(G,\tau).$ Next, we justify the
four equalities in the computation below. The first equality  is due to the expression of $S$; the second is due to  the
identity $L_{\check D}(f)(e)=R_D (f)(e)$; the third is due to (\ref{eq:Klambda}); finally,  we recall for an arbitrary $D \in U(\g)$   any  smooth vector $f \in V_\lambda^\infty$ is in the domain for $L_D$, in particular,  $y \mapsto
K_\lambda(y,e)^\star w_i$ is in the domain for $L_D$.  These  four considerations justifies the following
equalities for any smooth vector $f \in \,H^2(G,\tau)  $,
	\begin{align*}
	\int_G (f(y), K_S(y,e)^\star z_j)_W dy & =(S(f)(e),z_j)\\
	& =\sum d_{j,b,i}(L_{\check D_b} f(e), w_i)_W \\
	& =\sum_{b,i} d_{j,b,i} \int_G (L_{\check D_b} f(y), K_\lambda(y,e)^\star w_i)_W dy \\
	& =\sum_{b,i}d_{j,b,i}\int_G ( f (y), L_{\check D_{b}^\star}^{(1)} K_\lambda(y,e)^\star w_i)_W dy.
	\end{align*}
	We observe, the first and last member of the above equalities defines continuous linear functionals on
$\,H^2(G,\tau)$ and the two functionals agree on a dense subspace, namely the subspace  of smooth vectors, whence \begin{equation*} K_S(y,e)^\star z_j
=\sum_{b,i} d_{j,b,i}L_{\check D_{b}^\star}^{(1)} K_\lambda(y,e)^\star w_i. \eqno{(\natural)}  \end{equation*}   Since, the right hand
side of the above equality is a $K-$finite vector for $\pi_\lambda,$  we have shown $b)$.  To show the  continuity
of $S$, we notice that $S$ is defined by the  Carleman kernel $K_S$  (for each $h\in H$, $K_S(\cdot,h)^\star z_j \in
\,H^2(G,\tau)$) and by hypothesis the domain of the integral operator defined by $K_S$ on $L^2(G\times_\tau W)$
contains $\,H^2(G,\tau).$ Since a Carleman kernel determines a closed map on its maximal domain, and $\,H^2(G,\tau)$
is a closed subspace,  we have $S : \,H^2(G,\tau) \rightarrow L^2(H\times_\nu E)$ is a closed linear map with domain
$\,H^2(G,\tau)$,  the closed graph Theorem leads us to the continuity of $S.$ \\ To show the converse statement, we
make explicit the hypotheses on $S$, \\ $K_S(hx,hh_1)=K_S(x,h_1), h,h_1 \in H, x \in G$; for each $z\in E$, $y \mapsto
K_S(y,e)^\star z$ is a $K-$finite vector in $\,H^2(G,\tau)$; domain of $S$ equal to $\,H^2(G,\tau)$. We show $S$ is continuous and $S$
the restriction of a differential operator. In fact, since $K_S(x,h)=K_S(h^{-1}x,e)$ and $K_S(\cdot,e)^\star z$ belongs to
$\,H^2(G,\tau)$ we obtain $K_S(\cdot,h)$ is square integrable and hence $S$ is a Carleman map. As in the direct
implication, we obtain $S$ is continuous. To verify $S$ is the restriction of a differential operator, we fix  a nonzero
vector $w \in W.$ Since $\,H^2(G,\tau)$ is an irreducible representation for $G$, a result of Harish-Chandra shows that the
underlying $(\g,K)-$module for $\,H^2(G,\tau)$ is $U(\g)-$irreducible. It readily follows that the function
$K_\lambda (\cdot,e)^\star w$ is nonzero (otherwise $K_\lambda$ would be the null function) and a $K-$finite vector, therefore, for each
$z_j$,  there exists $D_j \in U(\g)$ so that $K_S(\cdot,e)^\star z_j =L_{D_j} K_\lambda (\cdot, e)^\star w. $ For
$f$ smooth vector in $\,H^2(G,\tau),$  $h \in H$, we have \begin{multline*} S(f)(h^{-1})=S(L_{h}f)(e)   =\sum_j(S(L_{h}f)(e),z_j)z_j \\
=\sum_j \int_G (L_{h}f(y),K_S(y,e)^\star z_j)_W z_j   =\sum_j \int_G (L_{h}f(y),L_{D_j}K_\lambda (y,e)^\star w)_W
z_j  \\  =\sum_j \int_G (L_{D_j^\star}(L_{h}f)(y),K_\lambda (y,e)^\star w)_W dy z_j =\sum_j
(L_{D_j^\star}(L_{h}f)(e), w)_Wz_j \\ =\sum_j
((R_{\check{D_j^\star}})(L_h f)(e), w)_W z_j \\  =\sum_j
((R_{\check{D_j^\star}})(f)(h^{-1}), w)_W z_j. \end{multline*}  Thus, after we fix      a linear basis $\{R_i \}$
for $U(\g),$ and we write $\check{D_j^\star} =\sum_i d_{i,j}R_i$,  for any smooth vector $f \in \,H^2(G,\tau), h \in H$ we obtain \\ \phantom{xxxxxxxxxxxxxxxx} $S(f)(h )= \sum_{j,i }d_{i,j} ([
R_{R_i}(f)](h),w)z_j$. \\
	 We claim  the equality holds for every $f \in \,H^2(G,\tau).$  In fact,   each $f$ in $\,H^2(G,\tau)$ is limit of a sequence $f_n$ of smooth vectors. The result of PDE alluded to in the direct proof let us conclude that sequence of functions  $ \sum_{j,i }d_{i,j} ([
R_{R_i}(f_n)](h),w)z_j$ converges uniformly on compact sets to the right hand side evaluated at $f$. On the other hand, we already know $S$ is continuous, thus,   if necessary going to a subsequence, the
Riesz-Fischer Theorem yields the left hand side   converges pointwise  (a.e.) to $S(f)$.  Therefore, we have shown   $S$ is equal to  the restriction of a differential operator. Thus, we have  concluded a proof of
Lemma~\ref{prop:diffopiskern}.   \end{proof}
  \begin{proof}[ Proof of Proposition~\ref{prop:adviadiffop}.] For the vector spaces $E,W$, we fix orthonormal basis $\{z_j\}, \{w_i \}$ respectively.
Since $res_H(\pi_\lambda)$ is discretely decomposable,  Theorem~\ref{prop:symbrea} shows $S$ is an integral map and Remark~\ref{rmk:structS}   yields
$ K_S(\cdot,e)^\star z_j$ is a $L-$finite vector. The hypothesis  $res_H(\pi_\lambda)$ is an admissible
representation allows us to apply \cite[Prop. 1.6]{K3}. In this way we obtain that the subspace of $L-$finite
vectors in $\,H^2(G,\tau)$ is equal to the subspace of $K-$finite vectors.  Whence, $x \mapsto K_S(x,e)^\star z_j$
is a $K-$finite vector. By hypothesis,  $\,H^2(G,\tau)_{K-fin}$ is an  irreducible representation under the action
of $U(\g)$ and the function $y \mapsto K_\lambda (y,e)^\star w_i$ is nonzero and $K-$finite, hence, for each $i,j$
there exists $ C_{j,i} \in  U(\g) $ so that $[L_{C_{j,i}}^{(1)} K_\lambda ](y,e)^\star w_i = K_S(y,e)^\star z_j,\,
\text{ for \,all}\, y \in G.$ Therefore,  since $x \mapsto K_S(x,e)^\star z_j$ is a smooth vector for $G$, for $f
\in V_\lambda^\infty$  we  justify as in the proof of Lemma~\ref{prop:diffopiskern} the  fifth and seventh
equality in the following computation, the sixth equality is due to (\ref{eq:Klambda}), for $h \in H$,  we set $\tilde f(x)=L_{h^{-1}}(f)(x),$  the hypothesis of $S$ is a $H-$map justifies the first equality in \begin{align*} S(f)(h)=  & S(\tilde f)(e)
 =\sum_j (S(\tilde f)(e),z_j)_E z_j  \\ & =\sum_j \int_G (\tilde f(y), K_S(y,e)^\star z_j)_W dy z_j \\ & =\sum_j \int_G (\tilde f(y),
[L_{C_{j,i}}^{(1)} K_\lambda ](y,e)^\star w_i)_W dy z_j\\ & =\sum_j \int_G (L_{C_{j,i}^\star} \tilde f(y), K_\lambda
(y,e)^\star w_i)_W dy z_j \\ & =\sum_j ([L_{C_{j,i}^\star} \tilde f](e),w_i)_W z_j =\sum_j ([R_{\check{C_{j,i}^\star}}
\tilde f](e),w_i)_W z_j \\ &  =\sum_j  ([R_{\check{C_{j,i}^\star} } (f)](h),w_i)_W z_j
. \end{align*}   After we set  $D_{j,i}:= \check{C_{j,i}^\star} $ and we recall definition~\ref{eq:defdop} we
conclude that $S$ restricted to the subspace of smooth vectors agrees with the restriction of a differential
operator. In order to show the equality for generic    $f \in \,H^2(G,\tau)$ we argue as follows:  there
	exists a sequence $f_r$ of elements in $V_\lambda^\infty$ which converges in $L^2-$norm to $f$.  Owing to the
Casimir operator is elliptic on $G/K$,  the sequence $f_r$,  as well as any derivatives of the sequence,  converge
uniformly on compact subsets. Moreover,  owing to Harish-Chandra's Plancherel Theorem, we have that
$L^2(H\times_\nu E)_{disc}$ is equal to  a finite sum of eigenspaces for the Casimir operator for $\h.$ We know  the Casimir
operator acts as an elliptic differential operator on $L^2(H\times_\nu E)$, whence, $L^2(H\times_\nu E)_{disc}$ is a reproducing kernel subspace, in consequence,  point evaluation
is a continuous linear functional on $L^2(H\times_\nu E)_{disc}$ \cite{At}. Finally, the hypothesis on
$res_H(\pi_\lambda)$ gives the image of $S$ is contained in $L^2(H\times_\nu E)_{disc}$.  Therefore, we have
justified the steps in \begin{multline*} S(f)(h)=\lim_r S(f_r)(h) = \lim_r \sum_j ([R_{D_{j,i}} f_r](h), w_i)_W
z_j \\ = \sum_j ([R_{D_{j,i}} \lim_r f_r](h), w_i)_W z_j =\sum_j ( [R_{D_{j,i}} f](h), w_i)_W z_j. \end{multline*}
Whence, we have shown the first  statement in Proposition~\ref{prop:adviadiffop}.
	
	To follow we assume for some $\nu$ and  some nonzero intertwining $H-$map  $S :\,H^2(G,\tau) \rightarrow
L^2(H\times_\nu E )_{disc}$  is the restriction of a nonzero linear differential operator, we show  $res_H(\pi_\lambda)$
is discretely decomposable.\\
	The  hypothesis allows us to apply Lemma~\ref{prop:diffopiskern}. Thus,    $ y \mapsto
K_S(y,e)^\star z_j$ is a $K-$finite vector in $\,H^2(G,\tau).$ We claim, $K_S(\cdot ,e)^\star z_j$  is
$\z(U(\h))-$finite. In fact,   in a previous paragraph we argued that Harish-Chandra's Plancherel Theorem shows $L^2(H\times_\nu E )_{disc}$ is equal to a finite sum of
 eigenspaces of the Casimir operator for $\h$. Thus,  for $f \in V_\lambda^\infty, D \in \z(U(\h)) $ so that
 $S(f)$ belongs to  in an irreducible component of an  eigenspace for the Casimir operator, we have the equalities \begin{multline*}\int_G (f(y),L_{
D^\star}^{(1)}K_S(y,e)^\star z)_W dy=\int_G (L_D f (y), K_S(y,e)^\star z)_W  dy \\ = (S(L_D f) (e),z)_Z =\chi_\mu
(D) (S(f) (e), z)_Z \\=\chi_\mu(D)\int_G (f(y), K_S(y,e)^\star z)_W dy.   \end{multline*} The third equality holds
because by hypothesis $S(f)$ is an eigenfunction for $\z(U(\h)).$ Since,  each function $K_S(\cdot,e)^\star z, \,\, L_{
D^\star}^{(1)}K_S(y,e)^\star z$ belongs to $H^2(G,\tau)$, the first and last member of the
above equalities determine continuous linear functionals on $\,H^2(G,\tau)$, the two linear functionals agree in the dense subspace of
smooth vectors. Whence, $y\mapsto K_S(y,e)^\star z_j$ is an eigenfunction for $\z(U(\h)).$ The general case
readily follows. The hypothesis $S$ is nonzero and Remark 3.8 $b)$, gives us  $z \in E$ so that
$U(\h) K_S(\cdot,e)^\star z $ is  a $\z(U(\h))-$finite and  a nonzero $U(\h)-$submodule of $\,H^2(G,\tau)_{K-fin}. $  We quote a
result
	of Harish-Chandra: a $U(\h)-$finitely generated, $\z(U(\h))-$finite,  $(\h,L)-$module has a finite composition
series. For a  proof  (cf. \cite[Corollary 3.4.7 and Theorem 4.2.1]{Wa1}). Thus, $\,H^2(G,\tau)_{K-fin} $ contains an irreducible $(\h,L)$-submodule.    Therefore,   the subspace $U(\h)
K_S(\cdot,e)^\star z $ contains an irreducible $U(\h)-$sub\-mod\-ule. Next, in \cite[Lemma 1.5]{K3}  we find a proof
of: if a $(\g,K)-$mod\-ule contains an irreducible $(\h,L)-$submodule, then the $(\g,K)-$module is $\h-$al\-ge\-bra\-i\-cal\-ly
decomposable. Thus,  $res_H(\pi_\lambda)$ is  al\-ge\-bra\-i\-cal\-ly  discretely decomposable. The hypothesis $V_\lambda$ is
unitary yields  discrete decomposable \cite[Theorem 4.2.6]{K4}. Whence, we have shown $i)$.
	
	We now assume for some $(\sigma, Z)$ and every  intertwining linear $H-$map \\ $S :\,H^2(G,\tau) \rightarrow
H^2(H,\sigma)$ is the restriction of a linear differential operator. We show,   the multiplicity of $
(\pi_\mu^H, H^2(H,\sigma))$ in $res_H (\pi_\lambda)$ is finite.
	
	The hypothesis and $i)$ yields $res_H(\pi_\lambda)$ is discrete decomposable. Let's
assume the multiplicity  of $H^2(H,\sigma)$ in $res_H(\pi_\lambda)$ is infinite. Thus, there exists $T_1, T_2,
\dots $ so that $T_j : H^2(H,\sigma) \rightarrow \,H^2(G,\tau)[V_\mu^H]$ are isometric immersion    intertwining
linear $H-$maps so that  for $r\not= s$  the image of $T_r$ is orthogonal to the image of $T_s$ and the algebraic sum
of the subspaces  $T_r(H^2(H,\sigma)), \,\, r=1,2,\cdots $ is dense in $\,H^2(G,\tau)[V_\mu^H]$. Let $\iota : Z
\rightarrow H^2(H,\sigma)[Z]$ be the
	equivariant immersion adjoint to evaluation at the identity. We fix a norm one vector $g_0 :=\iota (z_0) \in
H^2(H,\sigma)[Z]$. There are two possibilities: For some $r$ the function $T_r(g_0)$   is not a
$K-$finite vector or else for every $r$ the function $T_r(g_0)$ is a $K-$finite vector. To
follow, we analyze  the second case, for this,  we  define $v_n :=T_n(g_0)$ and we choose a sequence of nonzero
positive real numbers $(a_n)_n$ so that $v_0:=\sum_n a_n v_n$ is not the zero vector.  Due to the orthogonality
for the image of the  $T_r$ and the choice of the sequence,   $v_0$ is not a $K-$finite vector.  Since the
stabilizer of $v_0$ in $H$ is equal to the stabilizer of $g_0$ on $H$, the correspondence  $T : H^2(H,\sigma)
\rightarrow \,H^2(G,\tau)$ defined by $T(h.g_0)=\frac{1}{\Vert v_0 \Vert} h.v_0$  extends to  an isometric
immersion. We claim $S=T^\star$ is not the restriction of a linear differential operator. For this, we show
$S(\frac{v_0}{\Vert v_0 \Vert})=g_0$ and $K_S(\cdot,e)^\star z_0 = \frac{v_0}{\Vert v_0 \Vert}$.  On one hand, we
have the following system of  equations  \\ \phantom{xxxxxxx} $ (S(f)(e),z_0)_Z = \int_G (f(y), K_S(y,e)^\star z_0)_W dy
\,\,\,\forall f \in \,H^2(G,\tau)$ \\ determine the function $K_S(\cdot,e)^\star z_0$.  On the other hand, for
arbitrary $f \in H^2(G, \tau)$ we have \begin{multline*}\int_G (f(y), \frac{v_0(y)}{\Vert v_0 \Vert})_W dy
=(f,T(g_0))_{H^2(G, \tau) } =(T^\star f, g_0)_{H^2(H,\sigma)} \\ =(S(f), \iota (z_0))_{H^2(H,\sigma)}
=(\iota^\star (Sf), z_0)_Z =(S(f)(e), z_0)_Z.\end{multline*} Thus,  we have shown the equality $K_S(\cdot,e)^\star
z_0 = \frac{v_0}{\Vert v_0 \Vert}.$ Therefore, if $S$ were a  differential operator, the fact
	$v_0$ is not a $K-$finite vector, would contradict  Lemma~\ref{prop:diffopiskern} $ b)$. In the first case,
a similar argument yields    $S:=T_r^\star $ is not the restriction of a linear differential operator.   This
concludes the proof of $ii)$ in Proposition~\ref{prop:adviadiffop}.  \end{proof}
\begin{cor} Let $\{ R_a\}$ be a linear basis for $U(\g)$. Then, there exists smooth functions $g_{a,b} $ on $H$ and complex numbers $d_{j,b,i}$ so
that  $$K_S (y,h)^\star z_j  =\sum_{a,b,i} d_{j,b,i} g_{a,b}(h) (L_{R_a}^{(1)} K_\lambda) (y,h)^\star w_i .$$
\end{cor}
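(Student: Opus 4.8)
The plan is to reduce the assertion to the expansion already available at the identity and then transport it to an arbitrary $h\in H$ by means of the adjoint action. First I would record the translation property of the kernel: from the equivariance $K_S(h_1yk,h_1hs)=\nu(s^{-1})K_S(y,h)\tau(k)$ established in Lemma~\ref{prop:diffopiskern}, taking $s=k=e$ and $h_1=h^{-1}$ gives $K_S(y,h)=K_S(h^{-1}y,e)$, hence $K_S(y,h)^\star z_j=K_S(h^{-1}y,e)^\star z_j$. On the other hand, the displayed identity $(\natural)$ in the proof of Lemma~\ref{prop:diffopiskern} furnishes fixed elements $E_b\in U(\g)$ (the $\check{D_b^\star}$ appearing there) and constants $d_{j,b,i}$ with
\[
K_S(u,e)^\star z_j=\sum_{b,i} d_{j,b,i}\,(L_{E_b}^{(1)}K_\lambda)(u,e)^\star w_i,\qquad u\in G .
\]
Setting $u=h^{-1}y$ reduces the Corollary to re-expressing $(L_{E_b}^{(1)}K_\lambda)(h^{-1}y,e)$ in terms of the derivatives $(L_{R_a}^{(1)}K_\lambda)(y,h)$.

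The heart of the argument is a commutation identity between left translation and the left infinitesimal action. Writing $L(h)$ for the left translation $(L(h)F)(y)=F(h^{-1}y)$, a direct computation with one-parameter subgroups (using $h^{-1}\exp(-t\,Ad(h)X)=\exp(-tX)h^{-1}$) shows $L(h)\circ L_X=L_{Ad(h)X}\circ L(h)$ for $X\in\g$, and this extends to $L(h)\circ L_D=L_{Ad(h)D}\circ L(h)$ for every $D\in U(\g)$, where $Ad(h)$ denotes the algebra automorphism of $U(\g)$ extending $Ad(h)\colon\g\to\g$. Applying this to $\Phi:=K_\lambda(\cdot,e)$ and using the identity $K_\lambda(y,h)=K_\lambda(h^{-1}y,e)$, i.e. $K_\lambda(\cdot,h)=L(h)\Phi$, I would obtain the pointwise equality in $End_\C(W)$
\[
(L_{E_b}^{(1)}K_\lambda)(h^{-1}y,e)=(L_{Ad(h)E_b}^{(1)}K_\lambda)(y,h).
\]

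Finally I expand $Ad(h)E_b$ in the chosen basis: since $Ad(h)$ is an automorphism of $U(\g)$ depending smoothly (indeed polynomially in the matrix entries of $h$) on $h$, one may write $Ad(h)E_b=\sum_a g_{a,b}(h)R_a$ with $g_{a,b}\in C^\infty(H)$. Substituting into the two reductions above, taking the $End_\C(W)$-adjoint and applying to $w_i$ yields
\[
K_S(y,h)^\star z_j=\sum_{a,b,i} d_{j,b,i}\,g_{a,b}(h)\,(L_{R_a}^{(1)}K_\lambda)(y,h)^\star w_i,
\]
which is the claimed formula. The one point demanding care, and the main (though routine) obstacle, is the bookkeeping of adjoints: because $\star$ is conjugate-linear and the elements $E_b,R_a$ may be complex, passing $\star$ through the expansion replaces each coefficient $g_{a,b}(h)$ by its complex conjugate and each $U(\g)$-element by its conjugate. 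As these remain smooth functions and basis elements, relabelling the $g_{a,b}$ leaves the stated form intact; everything else follows directly from $(\natural)$ together with the $Ad(h)$-equivariance of the reproducing kernel $K_\lambda$.
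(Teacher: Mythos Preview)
Your argument is correct and mirrors the paper's own proof: both start from the identity $(\natural)$, use $K_S(y,h)=K_S(h^{-1}y,e)$ and $K_\lambda(y,h)=K_\lambda(h^{-1}y,e)$, commute the translation past $L_{E_b}$ via the adjoint action, and then expand in the basis $\{R_a\}$ with smooth coefficients. The only discrepancies are cosmetic---the paper writes $Ad(h^{-1})$ where your (carefully justified) commutation gives $Ad(h)$, and you track the complex conjugation coming from the $\star$ while the paper silently absorbs it---neither affects the existence statement being proved.
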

In fact, we recall the equality $(\natural )$, let $g_{a,b}$ be smooth functions so that $Ad(h^{-1})\check{D_b^\star} =\sum_a g_{a,b}(h) R_a.$ We have: \begin{align*} K_S
(y,h)^\star z_j & = K_S (h^{-1}y,e)^\star z_j=\sum_{a,b,i} d_{j,b,i} (L_{\check{D_b^\star}}^{(1)} K_\lambda) (h^{-1}y,e)^\star
w_i \\ & =  \sum_{a,b,i} d_{j,b,i}  (L_{Ad(h^{-1})\check{D_b^\star}}^{(1)} K_\lambda) (y,h)^\star w_i \\ & = \sum_{a,b,i}
d_{j,b,i} g_{a,b}(h) (L_{R_a}^{(1)} K_\lambda) (y,h)^\star w_i. \end{align*}
\begin{examp}
	For a real form $H/L$ for the Hermitian symmetric space $G/K$ and a holomorphic discrete series $\,H^2(G,\tau)$
for $G$, any nonzero intertwining linear $H-$map  $S : \,H^2(G,\tau) \rightarrow H^2(H,\sigma)$ never is  the
restriction of a differential operator. The statement  holds because under our hypothesis $res_H(\pi_\lambda)$ is not discretely decomposable \cite{Ho}.
 \end{examp}
 \subsection{Equivariant maps into $K-$types}

We now show a   result  related to Lemma~\ref{prop:diffopiskern},  and by means of a  very similar proof. Let $(\vartheta, B)$ be an irreducible representation of $K$.  We fix a realization for $(\vartheta, B)$ as a subspace of the space of smooth sections for a bundle $ K\times_\theta C \rightarrow K/K_1$. Here, $K_1$ is a closed subgroup of $K$ and $(\theta, C)$ is a finite dimensional representation for $K_1.$
\begin{fact}\label{prop:gkisdiffop} Assume $(\vartheta,B)$ is a $K-$type for $\,H^2(G,\tau)$. Let $S : \,H^2(G,\tau)\rightarrow B$ be a continuous intertwining linear map. Then, $S$ is the restriction of a differential operator.
\end{fact}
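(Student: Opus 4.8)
The plan is to follow, almost verbatim, the argument used for the converse direction of Lemma~\ref{prop:diffopiskern}, with the pair $(H,(\nu,E))$ replaced by $(K,(\theta,C))$ and with the model $K\times_\theta C\to K/K_1$ playing the role of $L^2(H\times_\nu E)$; accordingly, ``differential operator'' is understood in the sense of (\ref{eq:defdop}) with $h\in H$ replaced by $k\in K$, $D_b\in U(\g)$, and $z_a\in C$. First I would reduce $S$ to a Carleman kernel. Fix orthonormal bases $\{w_i\}$ of $W$ and $\{c_j\}$ of $C$. For fixed $k\in K$ and $c\in C$ the linear functional $f\mapsto (S(f)(k),c)_C$ is continuous on $\,H^2(G,\tau)$, because $S$ is continuous, $B$ is finite dimensional so point evaluation $B\ni b\mapsto b(k)\in C$ is bounded, and pairing against $c$ is bounded. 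The Riesz representation theorem, together with the continuity of point evaluation on $\,H^2(G,\tau)$ coming from the reproducing kernel $K_\lambda$ of (\ref{eq:Klambda}), then yields a function $K_S:G\times K\to Hom_\C(W,C)$ with $y\mapsto K_S(y,k)^\star c\in\,H^2(G,\tau)$ and $(S(f)(k),c)_C=\int_G (f(y),K_S(y,k)^\star c)_W\,dy$.

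The key step, and the one I expect to require the most care, is to show that each section $y\mapsto K_S(y,e)^\star c_j$ is a $K-$finite vector for $\pi_\lambda$. This is where the hypothesis that $(\vartheta,B)$ is a $K-$type enters. Writing $\mathrm{ev}_e:B\to C$ for evaluation of a section at $e\in K$, one has $K_S(\cdot,e)^\star c=S^\star(\mathrm{ev}_e^\star c)$, so the sections in question lie in $S^\star(B)$. Since $S$ intertwines the $K-$action (left translation on $\,H^2(G,\tau)$ and the action $\vartheta$ on $B$), the adjoint $S^\star$ is $K-$equivariant, hence $S^\star(B)$ is a $K-$invariant subspace of $\,H^2(G,\tau)$ realized as a quotient of the finite dimensional $K-$module $B$; in particular it is finite dimensional and consists of $K-$finite vectors. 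Now I would invoke Harish-Chandra's theorem that the underlying $(\g,K)-$module of $\,H^2(G,\tau)$ is $U(\g)-$irreducible, together with the fact that $y\mapsto K_\lambda(y,e)^\star w$ is a nonzero (otherwise $K_\lambda$ would vanish identically) $K-$finite vector: for each $j$ there is $D_j\in U(\g)$ with $K_S(\cdot,e)^\star c_j=L_{D_j}^{(1)}K_\lambda(\cdot,e)^\star w$.

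With this in hand the remaining computation is identical to the one in the proof of Lemma~\ref{prop:diffopiskern}. For a smooth vector $f$ and $k\in K$ I would write $S(f)(k^{-1})=S(L_k f)(e)=\sum_j (S(L_k f)(e),c_j)_C\,c_j$, expand each summand through $K_S(\cdot,e)^\star c_j=L_{D_j}^{(1)}K_\lambda(\cdot,e)^\star w$, move $L_{D_j}$ across the $L^2(G)$-inner product where it becomes $L_{D_j^\star}$ acting on $L_k f$, collapse the integral by the reproducing property (\ref{eq:Klambda}), use the identity $L_{\check D}(g)(e)=R_D(g)(e)$, and finally commute the right action past $L_k$ to obtain $S(f)(k)=\sum_j (R_{\check{D_j^\star}}f(k),w)_W\,c_j$. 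Fixing a linear basis $\{R_i\}$ of $U(\g)$ and writing $\check{D_j^\star}=\sum_i d_{i,j}R_i$ exhibits $S$, on smooth vectors, as the restriction of a differential operator in the sense of (\ref{eq:defdop}).

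Finally I would remove the smoothness restriction exactly as in Lemma~\ref{prop:diffopiskern}: an arbitrary $f\in\,H^2(G,\tau)$ is an $L^2-$limit of smooth vectors $f_n$, and since $\,H^2(G,\tau)$ is the $L^2-$kernel of the elliptic operator $R_{\Omega_G}$ on $G/K$, the result of \cite{At} gives uniform convergence on compacta of the $f_n$ and all their derivatives; hence $\sum_{i,j}d_{i,j}(R_{R_i}f_n(k),w)_W\,c_j$ converges pointwise to the same expression evaluated at $f$, while continuity of $S$ forces $S(f_n)\to S(f)$ in the finite dimensional space $B$, so the two limits agree and $S$ is the restriction of a differential operator on all of $\,H^2(G,\tau)$. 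The only genuinely new point compared with Lemma~\ref{prop:diffopiskern} is the automatic $K-$finiteness of $K_S(\cdot,e)^\star c_j$, which replaces the hypothesis present there and which follows solely from $B$ being a single finite dimensional $K-$type.
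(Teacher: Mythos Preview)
Your proof is correct and follows essentially the same approach as the paper: represent $S$ by a Carleman kernel, observe that $K_S(\cdot,e)^\star c$ is $K$-finite (the paper phrases this as $K_S(\cdot,e)^\star c\in\,H^2(G,\tau)[B]\subset(V_\lambda)_{K-fin}$, while you use the equivalent observation $K_S(\cdot,e)^\star c\in S^\star(B)$), then invoke $U(\g)$-irreducibility of the $K$-finite vectors and finish exactly as in the converse of Lemma~\ref{prop:diffopiskern}. The paper's proof is briefer only because it refers back to that lemma rather than repeating the computation and the passage from smooth to arbitrary vectors, both of which you spell out in full.
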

Indeed, we fix $c \in C, k \in K$, then, the map $\,H^2(G,\tau) \ni f \mapsto (S(f)(k),c)_C$ is a continuous linear functional in $\,H^2(G,\tau)$. Thus, there exists $K_S : G\times K \rightarrow Hom_\C(W,C)$ so that $y\mapsto K_S(y,c)^\star c$  belongs to $\,H^2(G,\tau)$ and $(S(f)(k),c)_C =\int_G (f(y), K_S(y,k)^\star c)_W dy.$ Actually,  $y\mapsto K_S(y,c)^\star c$ belongs to $\,H^2(G,\tau)[B] \subset (V_\lambda)_{K-fin}.$ The $U(\g)-$irreducibility for $(V_\lambda)_{K-fin}$ implies there exists $D \in U(\g)$ so that  $L_D^{(1)} K_\lambda(\cdot,e)w = K_S(\cdot,e)^\star c$ holds. Recalling $K_S(y,k)=K_S(k^{-1}y,e)$ and proceeding as in the proof of Lemma~\ref{prop:diffopiskern} and its Corollary  Fact~\ref{prop:gkisdiffop} follows.
\begin{fact}\label{prop:gl1isdiffop}  Let $(\vartheta ,B)$ be as before. We assume $ res_H(\pi_\lambda, V_\lambda)$ is admissible. Then, any intertwining operator from $\,H^2(G,\tau)$ into $(\vartheta, B)$  is equal to  a restriction of a differential operator. \\ In fact, the $H-$admissibility forces $V_\lambda[B] \subset (V_\lambda)_{K-fin}$. Now, the proof goes word by word as the one for Fact~\ref{prop:gkisdiffop}.
\end{fact}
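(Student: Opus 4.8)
The plan is to isolate the single place where the extra hypothesis of Fact~\ref{prop:gkisdiffop} is used and to supply the missing input via $H$-admissibility, after which the argument of Fact~\ref{prop:gkisdiffop} (itself modeled on Lemma~\ref{prop:diffopiskern}) can be copied verbatim. Concretely, I would first realize $S$ by a Carleman kernel exactly as in Fact~\ref{prop:gkisdiffop}: for $c\in C$ and $k\in K$ the functional $f\mapsto (S(f)(k),c)_C$ is continuous on $\,H^2(G,\tau)$, so Riesz representation yields $K_S:G\times K\to \mathrm{Hom}_\C(W,C)$ with $y\mapsto K_S(y,e)^\star c\in \,H^2(G,\tau)$ and $(S(f)(e),c)_C=\int_G (f(y),K_S(y,e)^\star c)_W\,dy$. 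The entire proof then hinges on showing that these reproducing sections $K_S(\cdot,e)^\star c$ are $K$-finite vectors for $\pi_\lambda$.

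In Fact~\ref{prop:gkisdiffop} this was immediate because $(\vartheta,B)$ was assumed to be a $K$-type, so the sections landed in the finite-dimensional isotypic component $\,H^2(G,\tau)[B]\subset (V_\lambda)_{K-fin}$. Here $B$ need not be a $K$-type, and I would argue instead that the image of the adjoint $S^\star:B\to \,H^2(G,\tau)$ is finite-dimensional and stable under $L=K\cap H$, hence consists of $L$-finite vectors; since $K_S(\cdot,e)^\star c$ lies in this image, one gets a priori only $V_\lambda[B]\subset (V_\lambda)_{L\text{-}fin}$. At this point the hypothesis that $res_H(\pi_\lambda)$ is $H$-admissible enters decisively: by Kobayashi \cite[Prop.\ 1.6]{K3} — the very tool already invoked in the proof of Proposition~\ref{prop:adviadiffop} — the space of $L$-finite vectors of $V_\lambda$ coincides with the space of $K$-finite vectors. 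Consequently $V_\lambda[B]\subset (V_\lambda)_{K-fin}$ and each section $K_S(\cdot,e)^\star c$ is $K$-finite.

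Once $K$-finiteness is established, the remainder is transcribed from Fact~\ref{prop:gkisdiffop}. Using that $(V_\lambda)_{K-fin}$ is $U(\g)$-irreducible (Harish-Chandra) and that $y\mapsto K_\lambda(y,e)^\star w$ is a nonzero $K$-finite vector, I would produce, for each relevant pair of basis vectors, an element $D\in U(\g)$ with $L_D^{(1)}K_\lambda(\cdot,e)^\star w=K_S(\cdot,e)^\star c$. Feeding this into the integration-by-parts computation of Lemma~\ref{prop:diffopiskern} rewrites $S$, on the dense subspace of smooth vectors, as $f\mapsto \sum ([R_{\check{D}^\star}f](\cdot),w)_W$, i.e. the restriction of a differential operator in the sense of \eqref{eq:defdop}. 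The passage from smooth vectors to all of $\,H^2(G,\tau)$ is the same elliptic-regularity step: the Casimir acts elliptically on $G/K$, so $L^2$-approximation of $f$ by smooth vectors forces uniform-on-compacta convergence of the approximants together with all their derivatives \cite{At}, and the differential-operator formula passes to the limit.

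The one genuinely new step — and hence the main obstacle — is the inclusion $V_\lambda[B]\subset (V_\lambda)_{K-fin}$: everything downstream requires the reproducing sections to be $K$-finite so that $U(\g)$-irreducibility can manufacture a single $D\in U(\g)$. Without $H$-admissibility one knows only that these sections are $L$-finite, and an $L$-finite-but-not-$K$-finite section cannot be written as $L_D^{(1)}K_\lambda(\cdot,e)^\star w$ with $D\in U(\g)$; this is precisely why the admissibility hypothesis is indispensable, exactly paralleling the role it plays in Proposition~\ref{prop:adviadiffop}.
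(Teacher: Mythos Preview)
Your proposal is correct and follows exactly the route the paper sketches: you identify the single new input, namely that $H$-admissibility (via \cite{DV} and \cite[Prop.~1.6]{K3}) forces $(V_\lambda)_{L\text{-}fin}=(V_\lambda)_{K\text{-}fin}$ so that $V_\lambda[B]\subset (V_\lambda)_{K\text{-}fin}$, and then you transcribe the kernel/$U(\g)$-irreducibility argument of Fact~\ref{prop:gkisdiffop} verbatim. This is precisely what the paper means by ``the proof goes word by word as the one for Fact~\ref{prop:gkisdiffop}.''
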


 \begin{rmk} In the setting of Fact~\ref{prop:gkisdiffop} or  Fact~\ref{prop:gl1isdiffop}, we further assume $B$ is realized as the kernel of differential operators. Then, any intertwining map $S :\,H^2(G,\tau) \rightarrow B$ extends to an intertwining map from the maximal globalization provided by the kernel of the Schmid operator into $B$. The extension is a differential operator.  The proof of this remark is as the proof for Theorem~\ref{thm:kobcon1}.
 	\end{rmk}

\begin{rmk} Any intertwining linear map $S :  \,H^2(G,\tau) \rightarrow L^2(H\times_\nu E) $ restricted to the subspace $\,H^2(G,\tau)_{H-disc}$ is at the same time the restriction of a Carleman kernel map and of a differential  operator.
\end{rmk}

\subsection{Extension of an intertwining map to  maximal globalization}

A conjecture of Toshiyuki Kobayashi   \cite{K6}  predicts     that  under certain hypothesis, each continuous intertwining linear operator between two maximal globalizations of Zuckerman modules, realized via Dolbeault cohomology,   is given by restriction of a holomorphic differential operator. In this subsection, we show
an analogous statement for the maximal globalization provided by  Schmid operators. \\ The symbols $G,K, (\tau,W), \,H^2(G,\tau), \pi_\lambda,  H, L, (\sigma, Z),   H^2(H,\sigma), \pi_\mu^H$ are as in Section~\ref{sec:prelim}. Let $$D_G : C^\infty (G\times_\tau W) \rightarrow C^\infty (G\times_{\tau_1} W_1)$$ be the Schmid operator  \cite{Sch} \cite{Wo}. Similarly, we have a Schmid operator $D_H : C^\infty (H\times_\sigma Z) \rightarrow C^\infty (H\times_{\sigma_1} Z_1).$ Since $D_G$   is an elliptic operator, $Ker(D_G)$ is a closed subspace of the space of smooth sections. Thus, $Ker(D_G)$ becomes a smooth Frechet representation $\ell$  for $G.$ Among the properties of the kernel of the operator $D_G$ are:  $\,H^2(G,\tau)$ is a linear subspace of $Ker(D_G);$ the inclusion map $\,H^2(G,\tau)$ into $Ker(D_G)$ is continuous;  the subspace of $K-$finite vectors in $Ker(D_G)$  is equal to the subspace of $K-$finite vectors for $\,H^2(G,\tau)$; $ Ker(D_G)$ is a maximal globalization for the underlying Harish-Chandra module for $(\pi_\lambda, \,H^2(G,\tau))$.   A similar statement holds for $D_H$. Now, we are ready to state the corresponding result.
\begin{thm} \label{thm:kobcon1} We assume $res_H(\pi_\lambda)$ is an $H-$admissible representation. Then, the following two statements hold: \\ a) Any continuous, $H-$intertwining   linear map $ S : Ker(D_G) \rightarrow Ker(D_H)$ is the restriction of a differential operator. \\ b) Any continuous $H-$intertwining linear map $S : \,H^2(G,\tau) \rightarrow H^2(H,\sigma)$ extends to a continuous intertwining operator from $Ker (D_G) $ to $Ker(D_H).$
\end{thm}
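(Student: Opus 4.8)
The plan is to derive both statements from Theorem~\ref{prop:diffop}, using three soft inputs recorded in Section~\ref{sec:prelim}: that $Ker(D_G)$ and $Ker(D_H)$ carry the subspace topology from the smooth sections (so the $C^\infty$-topology is in force), that $Ker(D_G)_{K-fin}=H^2(G,\tau)_{K-fin}$ is the underlying Harish-Chandra module and is dense in $Ker(D_G)$, and that point evaluations on $Ker(D_H)$ are continuous since $D_H$ is elliptic.

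First I would prove b). By Theorem~\ref{prop:diffop} the continuous $H$-map $S:H^2(G,\tau)\to H^2(H,\sigma)$ is the restriction of a differential operator $\tilde S$ of the form \eqref{eq:defdop}; as $\tilde S$ is defined on all of $C^\infty(G\times_\tau W)$, it is in particular defined on $Ker(D_G)$. The crux is to verify $\tilde S(Ker(D_G))\subseteq Ker(D_H)$. For this I consider the composite $D_H\circ\tilde S$, which is again an $H$-invariant differential operator $C^\infty(G\times_\tau W)\to C^\infty(H\times_{\sigma_1}Z_1)$ and hence continuous for the $C^\infty$-topologies. Since $\tilde S$ maps $H^2(G,\tau)$ into $H^2(H,\sigma)\subseteq Ker(D_H)$, the operator $D_H\circ\tilde S$ vanishes on $H^2(G,\tau)$, in particular on the dense subspace $H^2(G,\tau)_{K-fin}=Ker(D_G)_{K-fin}$. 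Continuity together with density then forces $D_H\circ\tilde S=0$ on all of $Ker(D_G)$, that is $\tilde S(Ker(D_G))\subseteq Ker(D_H)$. Being a differential operator, $\tilde S$ is automatically continuous for the $C^\infty$-topologies, hence for the globalization topologies, and it restricts to $S$ on $H^2(G,\tau)$; this is the required extension.

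For a) I would reduce to b) by passing through Harish-Chandra modules. Given a continuous $H$-map $S:Ker(D_G)\to Ker(D_H)$, equivariance carries $L$-finite vectors to $L$-finite vectors; since every vector in $Ker(D_G)_{K-fin}$ is $L$-finite and $Ker(D_H)_{L-fin}=H^2(H,\sigma)_{L-fin}$, restriction yields an $(\h,L)$-module homomorphism $\bar S:(V_\lambda)_{K-fin}\to (V_\mu^H)_{L-fin}$. Here $H$-admissibility enters decisively: the $(\sigma,Z)$-isotypic component of $res_H(\pi_\lambda)$ is a finite orthogonal direct sum of copies of $H^2(H,\sigma)$, and $\bar S$, which vanishes on every other isotypic component by Schur, is on this component a finite combination of orthogonal projections followed by unitary identifications; hence $\bar S$ extends to a bounded $H$-map $S_0:H^2(G,\tau)\to H^2(H,\sigma)$. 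By Theorem~\ref{prop:diffop} (the direct implication) $S_0$ is the restriction of a differential operator $\tilde S_0$, and by part b) the operator $\tilde S_0$ extends to a continuous $H$-map $\hat S_0:Ker(D_G)\to Ker(D_H)$. Finally $S$ and $\hat S_0$ induce the same $(\h,L)$-map $\bar S$ on $K$-finite vectors; since these are dense in $Ker(D_G)$ and both maps are continuous, $S=\hat S_0$, so $S$ is the restriction of the differential operator $\tilde S_0$.

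The main obstacle is the density-plus-continuity bookkeeping that pins down a continuous operator on $Ker(D_G)$ by its behaviour on $K$-finite vectors; concretely this rests on the structural facts quoted from Section~\ref{sec:prelim} — that $Ker(D_G)$ and $H^2(G,\tau)$ share the same $K$-finite vectors, dense in each, and that these are exactly the underlying Harish-Chandra module — together with Kobayashi's admissibility input \cite[Prop. 1.6]{K3} identifying $K$-finite and $L$-finite vectors. It is the latter that guarantees $\bar S$ is governed by the single isotypic family attached to $(\sigma,Z)$ with finite multiplicity, and hence that it extends to the Hilbert completion; without admissibility the induced $(\h,L)$-map need not be bounded and the reduction to b) breaks down.
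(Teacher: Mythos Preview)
Your proof of b) is essentially the paper's: apply Theorem~\ref{prop:diffop}, extend by the differential-operator formula, and use density of $K$-finite vectors in $Ker(D_G)$ together with continuity of $D_H\circ\tilde S$ in the smooth topology to push the image into $Ker(D_H)$.

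For a) you take a genuinely different route. The paper argues directly with kernels: it restricts $S$ to $H^2(G,\tau)$, uses continuity of the inclusions and of point evaluation on $Ker(D_H)$ to produce a Carleman kernel $K_S$, shows $K_S(\cdot,e)^\star z$ is $L$-finite hence $K$-finite (via \cite[Prop.~1.6]{K3}), and then writes it as $L_{D}^{(1)}K_\lambda(\cdot,e)^\star w$ to obtain the differential-operator expression on $K$-finite vectors, finally invoking that $Ker(D_G)$ is a maximal globalization. You instead pass to the Harish-Chandra level, observe that $\bar S$ factors through the finite-multiplicity $V_\mu^H$-isotypic component, extend to a bounded Hilbert-space map $S_0$, feed that into Theorem~\ref{prop:diffop}, and then invoke b) plus density. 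Both arguments are correct. Yours is cleaner in that it uses Theorem~\ref{prop:diffop} as a black box and isolates the only new ingredient (density in the Fr\'echet globalization); the paper's approach is more self-contained and makes the kernel $K_S$ visible, which ties the statement back into the running reproducing-kernel framework.
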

 In \cite[Theorem 3.6]{N}, Nakahama has shown a similar result under the hypothesis of both $G/K, H/L$ are Hermitian symmetric spaces, the inclusion $H/L $ into $G/K$ is holomorphic, and both representations are holomorphic discrete series

\begin{proof}[Proof of Theorem~\ref{thm:kobcon1}] We show $a)$.  Let $S$ be as in the hypothesis. Since each inclusion $H^2(G,\sigma) \subset Ker(D_G)$,  $H^2(H,\sigma) \subset Ker(D_H)$  is continuous, we have for $h \in H, z \in Z$ that the linear functional $\,H^2(G,\tau) \ni f  \mapsto (S(f)(h),z)_Z$ is continuous; whence,  Riez representation  Theorem implies there exists an element $y \mapsto K_S(y,h)^\star z $ of $\,H^2(G,\tau)$ so that $ (S(f)(h),z)_Z =\int_G (f(y), K_S(y,h)^\star z)_W dy . $ It readily follows that $y \mapsto K_S(y,e)^\star z$ is an $L-$fi\-nite vector. Since, for discrete series, the hypothesis of $H-$admissibility  implies $L-$admissibility, \cite{DV}, we apply  \cite[Proposition 1.6]{K3},  hence,   $y \mapsto K_S(y,e)^\star z$ is a $K-$finite vector. The $U(\g)-$irreducibility of the subspace of $K-$finite vectors yields $K_S(\cdot,e)^\star z =L_{D_{z,w}}^{(1)} K_\lambda (\cdot,e)^\star w$.  Next, for  a $K-$finite vector $f$  after a computation similar to the one in the proof of  Theorem \ref{prop:diffop}, we arrive    at the equality $(S(f)(h),z)_Z= ([R_{\check{D_{z,w}^\star}} f](h), w)_W$. The continuity of $S$ together with $Ker(D_G)$ is a maximal globalization, let us  conclude: $S$ is the restriction of a differential operator. Thus, we have shown $a)$. We now verify $b)$. The hypothesis of $H-$admissibility, let us apply Theorem \ref{prop:diffop}. Therefore, $S$ is the restriction of a differential operator. More precisely, $ S(f)(h)=\sum_{a,b,i} d_{a,b,i} ( [R_{D_b} f](h), w_i)_W z_a $ and $D_H(S(f))\equiv 0$ for   any $K-$finite vector $f$ in $\,H^2(G,\tau)$. We extend $S$ to $Ker(D_G)$ via the previous equality. Obviously the extension is continuous in smooth topology. We claim: the image  of the extension is contained in $Ker(D_H). $ Indeed, owing to the subspace of $K-$finite vectors in $\,H^2(G,\tau)$ is dense in $Ker(D_G)$ for  smooth topology,    we obtain $D_H(Sf)\equiv 0$ for every $f \in Ker(D_G)$. Whence, we have shown Theorem \ref{thm:kobcon1}.\end{proof}
Theorem~\ref{thm:kobcon1} is a step in the proof of  the conjecture of Kobayashi. Actually, the statement in  Theorem~\ref{thm:kobcon1} is a solution to the conjecture of Kobayashi if we choose as maximal globalizations the one  constructed via  Schmid operator.  In order to formulate the conjecture, we need to recall   notation as well as  results  from Schmid thesis   \cite{Sch}.
The Harish-Chandra parameter $\lambda$ gives rise to $G-$invariant complex structure on $G/T$, as well as a $K-$invariant complex structure on $K/T$ and holomorphic line bundles $\mathcal L_\lambda \rightarrow G/T$, $\mathcal L_\lambda \rightarrow K/T$ so that the representation of $K$ in $H^s(K/T,\mathcal O(\mathcal L_\lambda))$ is equivalent to $(\tau, W)$ and the representation $\ell^\star$  of $G$ on $H^s(G/T, \mathcal O(\mathcal L_\lambda))$ is infinitesimally equivalent to $(\pi_\lambda, V_\lambda^G).$ Here,  $s=\frac 12 dim K/T.$ Owing to the construction of the respective complex structures the inclusion map $i_K : K/T \rightarrow G/T$ is holomorphic. After we endow the space of smooth forms on $G/T$ with the smooth topology, the work of Schmid, Wolf and Hon-Wei Wong shows that the image of $\bar \partial $ is closed. Thus, $H^s(G/T, \mathcal O(\mathcal L_\lambda))$ affords a Frechet representation for $G.$ Next, we describe an equivalence $F_{K,T}$ between the representations $(\ell, Ker(D_G))$ and $(\ell^\star , H^s(G/T, \mathcal O(\mathcal L_\lambda))).$  For this, we model $(\tau, W)$ on $(\ell^\star , H^s(K/T, \mathcal O(\mathcal L_\lambda)))$. Then, for a smooth $(0,s)-$form $\varphi$ on $G/T$ with values on $\mathcal L_\lambda$,  Schmid associates the function $G \ni g \mapsto F_{K,T}(\varphi )(g):= i_K^\star( \ell^\star_g \varphi) \in H^s(K/T, \mathcal O(\mathcal L_\lambda)).$ Schmid shows that when $\varphi$ is closed we have that $F_{K,T}(\varphi)$ belongs to $Ker(D_G)$ and the resulting map from $H^s(G/T, \mathcal O(\mathcal L_\lambda))$ into $Ker(D_G)$ is bijective. We do not describe the  inverse of the map $F_{K,T}$.  Similarly, attached to  the representation $V_\mu^H$,  we have a holomorphic line bundle $\mathcal L_\mu$ over $H/U$,    and a map $F_{L,U} :H^{s'}(H/U, \mathcal O(\mathcal L_\mu)) \rightarrow Ker(D_H)$.
The  conjecture of Kobayashi  is: assume $res_H(\pi_\lambda)$ is an $H-$admissible representation. Then, for every discrete factor $H^2(H,\sigma)$ of $res_H(\,H^2(G,\tau))$, we have that any $H-$intertwining continuous map from $H^s(G/T,\mathcal O(\mathcal L_\lambda))$ into $H^{s'}(H/U, \mathcal O(\mathcal L_\mu))$ is a holomorphic differential operator. We are able to show that each  {\it intertwining operator is represented by a  differential operator}. In fact, let $S:  H^s(G/T,\mathcal O(\mathcal L_\lambda)) \rightarrow H^{s'}(H/U, \mathcal O(\mathcal L_\mu))$ be a continuous intertwining operator.  Then, $F_{L,U} S F_{K,T}^{-1}$ is a continuous intertwining linear map from $\,H^2(G,\tau)$ into $H^2(H,\sigma)$, Theorem~\ref{thm:kobcon1} yields that this  composition is a differential operator  in our sense.

\subsection{Comments on the relation among  Hom's}\label{subsec:relationamonghom}
As usual, $Hom_H(...,...)$ denotes the space of  continuous intertwining operators. We have the natural inclusions
\begin{multline*}
  Hom_{\h,L} (\,H^2(G,\tau)_{K-fin}, H^2(H,\sigma)_{L-fin}) \\ \supseteq  Hom_{H} (\,H^2(G,\tau)^{\infty}, H^2(H,\sigma)^\infty )  \\   \supseteq Hom_{H} (\,H^2(G,\tau), H^2(H,\sigma)), \end{multline*}
as well as, similar inclusions for $Hom's (H^2(H,\sigma), \,H^2(G,\tau)).$  We would like to point out that when $res_H(\pi_\lambda)$ is $H-$admissible the above inclusions are equalities.  In fact,
in  \cite[Lemma 1.3, Prop. 1.6]{K3}, it is shown that the above inclusions are equalities under the hypothesis of $res_H(\pi_\lambda)$ is  $L-$admissible. Actually, in  \cite{DV}, it is shown for discrete series representations  $H-$admissible is equivalent to be $L-$admissible. As a consequence, of both facts,  we obtain the equalities   \begin{equation*} \label{eq:L-fin=K-fin} \,H^2(G,\tau)_{L-fin} = \,H^2(G,\tau)_{K-fin}=\oplus_{M \in (\h,L)-irred} \,H^2(G,\tau)_{L-fin}[M]. \end{equation*} Here, the sum is algebraic. Thus, this work of Kobayashi together with Theorem~\ref{prop:diffop} shows that once we know a representation $V_\lambda$ is $H-$admissible, the associated branching law problem is algebraic.

 In different papers  T. Kobayashi, his co-authors and other authors have done a deep study of the space of continuous intertwining linear operators between two principal series representations; their results yields estimates for dimension of such spaces as well as precise computation of such spaces. Next, we present some comments of our work for discrete series representations.  For this paper,     $$ Diff_{H} (\,H^2(G,\tau), H^2(H,\sigma)) $$ is  the space of not necessarily continuous linear intertwining  maps that are restriction of differential operators. We have shown in Lemma~\ref{prop:diffopiskern}  the inclusion "automatic continuity Theorem"  $$ Diff_{H} (\,H^2(G,\tau), H^2(H,\sigma)) \subset Hom_{H} (\,H^2(G,\tau), H^2(H,\sigma)) $$
We have shown that a representation is  $H-$admissible if and only if for every $H^2(H,\sigma)$ the following equality holds,   $$  Diff_{H} (\,H^2(G,\tau), H^2(H,\sigma))= Hom_{H} (\,H^2(G,\tau), H^2(H,\sigma))  $$
Besides, we have shown that if for some $H^2(H,\sigma)$  we have $$ 0 < dim Diff_{H} (\,H^2(G,\tau), H^2(H,\sigma)),$$ then $res_H(\pi_\lambda)$ is discretely decomposable.  Therefore, $H-$discretely decomposable and multiplicity of $H^2(H,\sigma)$  in $res_H(\pi_\lambda )$ is infinite,  forces some symmetry breaking map from $\,H^2(G,\tau)$ into $H^2(H,\sigma))$ is not the restriction of a differential operator.

Also, for a closed subgroup $L_1$ of $K$, under the hypothesis that $\pi_\lambda$ is $L_1-$admissi\-ble, after we realize each irreducible representation $(\vartheta, B)$ for $L_1$ in some space of smooth sections, we have shown the equality\\ \phantom{xxxxxxxxxx} $Diff_{L_1} (\,H^2(G,\tau), B)=Hom_{L_1} (\,H^2(G,\tau), B).$ \\ Moreover,  each continuous $L_1$-map from $\,H^2(G,\tau)$ into $B$  extends to a $L_1-$dif\-fer\-en\-tial operator from $Ker(D_G)$ into $B.$

  \section{Two results on $L^2(G\times_\tau W)[V_{\lambda_1}^G]$}
 Besides,  $(\tau, W), \,H^2(G,\tau)$, as in section 2,   we consider another square   integrable irreducible representation $V_{\lambda_1}^G$ for $G$ of Harish-Chandra  parameter $\lambda_1$ and lowest $K-$type $(\tau_1, W_1).$ In this section, we show that any intertwining $G-$map from $H^2(G,\tau_1)$ into $L^2(G\times_\tau W)$ is an integral operator. In the second part of this section, we  compute a kernel for the orthogonal projector onto the isotypic component in $L^2(G\times_\tau W)$ determined by $V_{\lambda_1}^G$.
 \subsection{Analysis for the elements of  $Hom_G(H^2(G,\tau_1), L^2(G \times_\tau W))$} In this subsection, we develop a similar study  to the  one developed by  \cite{K5} on Shintani's functions. For a Harish-Chandra parameter $\lambda_1$  we study  continuous intertwining linear map  $T :H^2(G,\tau_1)=V_{\lambda_1}^G  \rightarrow L^2(G \times_\tau W)$.  To begin with, we show that the natural inclusion maps \begin{multline*} Hom_G(V_{\lambda_1}^G, L^2(G \times_\tau W)) \\ \subseteq Hom_G( (V_{\lambda_1}^G)^\infty , L^2(G \times_\tau W)^\infty) \\ \subseteq Hom_{(\g,K)}((V_{\lambda_1}^G)_{K-fin} , (L^2(G,\tau )^\infty)_{K-fin}). \end{multline*} are bijective. In fact, let $T_0$ be a linear map in $Hom_{(\g,K)}(...,...),$ then, due that  $Im T_0$ is contained in a Hilbert space, we have  $(closure(Im T_0))_{K-fin}=(Im T_0)_{K-fin}$. Thus, $closure(Im T_0)$ is an irreducible unitary representation. In \cite[Lemma 3.4.11]{Wa1}, we find a proof that $T_0$ extends to a continuous intertwining map $T$ from $H^2(G,\tau_1) $ onto the closure   of the image of $T_0.$  Hence, the claim follows.  Next, we show, for $T \in Hom_G(V_{\lambda_1}^G, L^2(G \times_\tau W))$  that
 \begin{prop}   $T$ is represented by a $G-$invariant smooth kernel $k_T$. We set $k(x):=K_T(x,1) . $ The function $k$ satisfies: $k : G \rightarrow Hom_\C (W_1,W) $ is  smooth; $ k(k_1 xk_2)= \tau(k_1) k(x) \tau_1 (k_2), k_1, k_2 \in K, x\in G$; $k$ is a solution to the equation $L_{\Omega_G} k =[ (\lambda_1, \lambda_1) -(\rho, \rho) ]k$;  and $k$ is square integrable.  Conversely,   given  a function $k$ which satisfies the four previous properties, then $Tf(x)=\int_G k(x^{-1}y) f(y) dy$ defines a continuous  intertwining continuous linear map from $H^2(G,\tau_1) $ into $L^2(G \times_\tau W).$
 \end{prop}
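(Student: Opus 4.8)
The statement has two halves, and the structural backbone of both is the identity $R_{\Omega_G}=L_{\Omega_G}$ recorded in Section~\ref{sec:prelim} together with the characterization of a $\lambda_1$-isotypic component as a Casimir eigenspace. The plan is first, in the forward direction, to show that $\mathrm{Im}(T)$ consists of solutions to an elliptic equation, so that the representing kernel exists and the left $G$-equivariance of $T$ collapses it to a single $Hom$-valued function $k$ on $G$; and second, in the converse direction, to recognise the matrix-coefficient nature of $Tf$, so that square-integrability and continuity fall out of the orthogonality relations for square-integrable representations.

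\textbf{Forward direction.} First I would observe that since $T$ commutes with every left translation $L_x$, it commutes with $L_D$ for all $D\in U(\g)$, in particular with $L_{\Omega_G}=R_{\Omega_G}$. As $(\pi_{\lambda_1},V_{\lambda_1}^G)$ is irreducible and $R_{\Omega_G}$ acts there by the scalar $(\lambda_1,\lambda_1)-(\rho,\rho)$, Schur's lemma forces $R_{\Omega_G}(Tf)=[(\lambda_1,\lambda_1)-(\rho,\rho)]\,Tf$; hence $\mathrm{Im}(T)$ lies in the kernel of the shifted elliptic operator $R_{\Omega_G}-[(\lambda_1,\lambda_1)-(\rho,\rho)]$, which is a reproducing kernel subspace on which point evaluation is continuous (Appendix~\ref{subsec:A4}). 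Consequently, for each $x\in G$ and $w\in W$ the functional $f\mapsto (Tf(x),w)_W$ is continuous on $V_{\lambda_1}^G$, and Riesz representation produces $K_T:G\times G\to Hom_\C(W_1,W)$ with $y\mapsto K_T(y,x)^\star w\in V_{\lambda_1}^G$ and $Tf(x)=\int_G K_T(y,x)f(y)\,dy$. The commutation $TL_z=L_zT$ then yields, by uniqueness of the Riesz representative, the left-invariance $K_T(zy,zx)=K_T(y,x)$, so that $K_T(y,x)=k(x^{-1}y)$ with $k:=K_T(\cdot,1)$. The four asserted properties of $k$ are then read off: smoothness, because each $k(\cdot)^\star w=K_T(\cdot,1)^\star w$ lies in $V_{\lambda_1}^G$ and these are smooth sections; the bi-$K$-equivariance $k(k_1xk_2)=\tau(k_1)k(x)\tau_1(k_2)$, from the $\tau$-equivariance of $Tf\in L^2(G\times_\tau W)$ (left slot) and the $\tau_1$-equivariance of $f$ (right slot); the eigenvalue equation $L_{\Omega_G}k=[(\lambda_1,\lambda_1)-(\rho,\rho)]k$, from $k(\cdot)^\star w\in V_{\lambda_1}^G$ together with $R_{\Omega_G}=L_{\Omega_G}$; and square-integrability, because $k(\cdot)^\star w\in V_{\lambda_1}^G\subset L^2$ and $W$ is finite dimensional.

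\textbf{Converse direction.} Given such a $k$, I would first note that $k(\cdot)^\star w$ is $\tau_1$-equivariant (by unitarity of $\tau_1$), square-integrable, and solves $R_{\Omega_G}(\cdot)=[(\lambda_1,\lambda_1)-(\rho,\rho)](\cdot)$ (again using $R_{\Omega_G}=L_{\Omega_G}$), hence $k(\cdot)^\star w\in V_{\lambda_1}^G$ for every $w\in W$. Writing $\psi_w:=k(\cdot)^\star w$ and using that $V_{\lambda_1}^G$ is left-translation invariant with $L_x$ unitary, a direct computation gives $(Tf(x),w)_W=(f,L_x\psi_w)_{L^2}=\langle\pi_{\lambda_1}(x^{-1})f,\psi_w\rangle_{V_{\lambda_1}^G}$, exhibiting each component of $Tf$ as a matrix coefficient of the square-integrable representation $\pi_{\lambda_1}$. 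Therefore $x\mapsto(Tf(x),w)_W$ is square-integrable, and summing over an orthonormal basis of $W$ gives $Tf\in L^2(G)\otimes W$; combined with the $\tau$-equivariance $Tf(xk)=\tau(k)^{-1}Tf(x)$ (which follows from $k(k^{-1}u)=\tau(k)^{-1}k(u)$) one gets $Tf\in L^2(G\times_\tau W)$. The orthogonality relations then make continuity quantitative: $\|Tf\|^2=\sum_w\int_G|\langle\pi_{\lambda_1}(x^{-1})f,\psi_w\rangle|^2\,dx=\tfrac{1}{d_{\lambda_1}}\|f\|^2\sum_w\|\psi_w\|^2$, where $d_{\lambda_1}$ is the formal degree, so $T$ is bounded with $\|T\|=\|k\|_{L^2}/\sqrt{d_{\lambda_1}}$. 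Finally, the substitution $y\mapsto zy$ in $Tf(x)=\int_G k(x^{-1}y)f(y)\,dy$ gives $T(L_zf)=L_z(Tf)$, so $T$ is $G$-intertwining.

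\textbf{Main obstacle.} The delicate point is convergence and $L^2$-boundedness in the converse: a priori $k$ is only square-integrable, so the defining integral is a convolution of two $L^2$ objects and need not land in $L^2$ by soft arguments. This difficulty is dissolved by the observation that both $\psi_w$ and $f$ are matrix coefficients of the same square-integrable representation $\pi_{\lambda_1}$, after which the orthogonality relations supply both integrability and the explicit norm. The boundedness of elements of $H^2(G,\tau)$ noted in Section~\ref{sec:prelim} gives, as a byproduct, only a pointwise bound $\|Tf(x)\|_W\le C\|f\|$; it is Schur orthogonality, not this pointwise estimate, that yields membership in $L^2$. In the forward direction the only subtlety is that the Riesz representative be canonical enough to inherit left-invariance, which is immediate once one uses that $V_{\lambda_1}^G$ consists of genuine smooth functions, so that the kernel identities hold pointwise.
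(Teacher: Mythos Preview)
Your forward direction is essentially the paper's argument: both land $\mathrm{Im}(T)$ in a Casimir eigenspace, invoke the reproducing-kernel structure of that elliptic eigenspace, and read off the four properties of $k$ from equivariance and membership in $V_{\lambda_1}^G$. (The paper first normalises $T$ to an isometry via Schur before observing the eigenspace property; you go there directly via $T\circ R_{\Omega_G}=R_{\Omega_G}\circ T$. Same content.)

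Your converse, however, is genuinely different from the paper's and in fact more elementary. The paper proceeds by showing that $k$ is tempered (Harish-Chandra), then uses the hypothesis that $G$ is linear together with \cite[Proposition~6]{OV} to obtain $k\in L^{2-\epsilon}$, and finally appeals to the Kunze--Stein phenomenon to conclude that convolution by $k$ is bounded $L^2\to L^2$. You bypass all of this by recognising that $\psi_w:=k(\cdot)^\star w$ already lies in $H^2(G,\tau_1)$ (it is smooth, $\tau_1$-equivariant, square integrable, and a Casimir eigenfunction), so that $(Tf(x),w)_W=(\pi_{\lambda_1}(x^{-1})f,\psi_w)$ is a genuine matrix coefficient of the discrete series $\pi_{\lambda_1}$; Schur orthogonality then gives both $Tf\in L^2$ and the explicit operator norm $\|T\|^2=d_{\lambda_1}^{-1}\sum_i\|\psi_{w_i}\|^2$. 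This is cleaner, gives a sharper conclusion, and does not use the linearity of $G$. The paper's route has the minor advantage of showing that convolution by $k$ is bounded on all of $L^2(G\times_{\tau_1}W_1)$, not just on $H^2(G,\tau_1)$, but the statement does not ask for that.
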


 \begin{proof}  For the direct affirmation we notice that  since $T$ is continuous,  Schur's Lemma yields that $T^\star T$ is a constant times the identity, hence, we may and will assume $T$ is an isometry into its image. Thus, $Im(T)$ is a closed irreducible left invariant subspace of $L^2(G \times_\tau W).$  Therefore,   $Im(T)$ is included in an eigenspace of the Casimir operator, whence $Im(T)$ is contained in the kernel of an elliptic $G-$invariant operator. Therefore,  $T$ is given by a   kernel $K_T : G\times G \rightarrow Hom_\C (W_1,W)$ so that the map $y \mapsto K_T(y,x)^\star  w $ belongs to $H^2(G, \tau_1)$ for each $w \in W, x \in G.$    Since $T$ is an intertwining map we have the equality $K_T(y,x)=K_T(x^{-1}y,e).$ Thus, $K_T$ is a smooth function. From $K_T (k y, x k_2)= \tau (k_2^{-1}) K_T(y,x) \tau_1(k)$ we obtain $k(k_1 x k_2)=\tau(k_1) k(x) \tau_1 (k_2).$ Since $ y \mapsto K_T(y,x)^\star  w$ belongs to $H^2(G,\tau_1)$ we obtain that $L_{\Omega_G} k = [(\lambda_1,\lambda_1)-(\rho,\rho)] k$ and that $k$ is square integrable. Conversely, given $k$ that satisfies the four properties listed. Then,  in
  \cite[Corollary to Lemma 65]{HC2},  it is shown $k$ is tempered in the sense of Harish-Chandra, the hypothesis $G$ is linear let us apply  \cite[Proposition 6]{OV} to $k $ and deduce  $k \in L^{2-\epsilon}(G, Hom_\C(Z,W))$ for $\epsilon$ small. Thus, the Kunze-Stein phenomena, \cite{C}, let us conclude that the proposed formula for $T$ defines a continuous linear map.    \end{proof}
  Actually, it  can be shown that the knowledge of one explicit immersion of the Harish-Chandra module for $H^2(G,\tau_1)$ into a non-unitary principal series representation yields that the kernel of each element in $Hom_G(H^2(G,\tau_1), \\ L^2(G \times_\tau W))$ can be written as a Eisenstein integral. \\
 One way to compute the  dimension of the space  $Hom_G(H^2(G,\tau_1), L^2(G \times_\tau W))$ is via Frobenius reciprocity and Blattner's formula \cite{DHV}. Thus, there is an explicit formula of  $dim \, Hom_G(H^2(G,\tau_1), L^2(G \times_\tau W))$ based on the Harish-Chandra parameter for $H^2(G,\tau_1)$, the highest weight for $W$ and a partition function associated to the noncompact roots with positive inner product with respect to  the Harish-Chandra parameter for $H^2(G,\tau_1).$
\subsection{Kernel for the projector onto $L^2(G\times_\tau W)[V_{\lambda_1}^G]$}\label{subsec:kernl2w} In this section we generalize a Theorem of \cite{WW} where they compute the kernel for  orthogonal projector onto the isotypic component $L^2(G\times_\tau W)[V_{\lambda}^G]=\,H^2(G,\tau).$ We also extend work of Shimeno \cite{Shi} for the case of line bundles over $G/K$.   We fix a representative $(\pi_{\lambda_1}, V_{\lambda_1}^G)$ of the representation for $G$ of Harish-Chandra parameter $\lambda_1$ and we assume $(\tau, W)$ is a $K-$type for $\pi_{\lambda_1}.$
We fix an orthonormal basis $\{f_j\}_{j=1\dots N} $ for $V_{\lambda_1}^G[W] $ and recall the spherical trace function $ \phi_1(z)=d_{\lambda_1} \sum_j (\pi_{\lambda_1}^G (z) f_j,f_j)_{V_{\lambda_1}^G}.$ Then,
\begin{prop} The orthogonal projector $P$ onto $L^2(G\times_\tau W)[V_{\lambda_1}^G]$ is kernel map given the "external kernel" $$  P(f)(x)=\int_G  \phi_1(x^{-1}y) f(y)dy \eqno{(\dag)}$$ and by the matrix valued Carleman kernel $$ (y,x) \mapsto \int_K \tau(k^{-1}) \phi_1 (x^{-1}yk) dk $$
\end{prop}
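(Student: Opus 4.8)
The plan is to prove that the scalar integral operator $P$ with external kernel $\phi_1(x^{-1}y)$ is exactly the orthogonal projection onto $M:=L^2(G\times_\tau W)[V_{\lambda_1}^G]$, and then to read off its matrix Carleman kernel by averaging over $K$. First I would make the isotypic component explicit: every embedding of $V_{\lambda_1}^G$ into $L^2(G\times_\tau W)$ comes from a $K$-map $\Phi\in Hom_K(W,V_{\lambda_1}^G)$ through $F_v(g):=\Phi^\star(\pi_{\lambda_1}(g^{-1})v)$, $v\in V_{\lambda_1}^G$. A short check shows $F_{v}(gk)=\tau(k)^{-1}F_v(g)$ and $F_{\pi_{\lambda_1}(x)v}=L_xF_v$, so each $\Phi$ yields a copy of $V_{\lambda_1}^G$ inside $L^2(G\times_\tau W)$ (square integrability of the $F_v$ being the square integrability of discrete series matrix coefficients), and as $\Phi$ runs over $Hom_K(W,V_{\lambda_1}^G)$ these images span $M$.

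Three formal properties then identify $P$ with the orthogonal projection $\Pi_M$. The $G$-equivariance is immediate because the kernel depends only on $x^{-1}y$; writing $Pf=\int_G\phi_1(z)R_zf\,dz$ exhibits $P$ as acting through the right regular representation against a finite block of matrix coefficients of $\pi_{\lambda_1}$, so Harish-Chandra's fact (1) (equivalently Schur orthogonality together with the Peter--Weyl block structure of $L^2(G)$) gives $\mathrm{Im}(P)\subseteq M$. Self-adjointness follows from the identity $\phi_1(z)=\overline{\phi_1(z^{-1})}$, a consequence of the unitarity of $\pi_{\lambda_1}$. The crucial point is $P|_M=\mathrm{id}$: expanding $\phi_1(x^{-1}y)=d_{\lambda_1}\sum_j(\pi_{\lambda_1}(y)f_j,\pi_{\lambda_1}(x)f_j)$ and computing $PF_v$ reduces, via the Schur orthogonality relations, the $G$-integral to $\sum_j\overline{(\Phi(w),f_j)}\,(v,\pi_{\lambda_1}(x)f_j)$; since $\Phi(w)$ lies in the block spanned by the $f_j$, this reassembles to $(F_v(x),w)_W$, so $PF_v=F_v$. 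A self-adjoint operator that restricts to the identity on $M$ and whose image lies in $M$ is the orthogonal projection onto $M$.

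To obtain the matrix kernel I would show that $P$ agrees, on $\tau$-equivariant $f$, with the operator of kernel $K_P(y,x)=\int_K\tau(k^{-1})\phi_1(x^{-1}yk)\,dk$. Substituting $y\mapsto yk^{-1}$ and using $f(yk^{-1})=\tau(k)f(y)$ together with the scalarity of $\phi_1$ turns the inner integral into $\tau(k)(Pf)(x)$, whence the $K$-average collapses to $\int_K\tau(k^{-1})\tau(k)(Pf)(x)\,dk=(Pf)(x)$. The same manipulation gives $K_P$ the bi-equivariance in the two variables needed so that $y\mapsto K_P(y,x)^\star w$ lies in $M\subseteq L^2(G\times_\tau W)$; thus $K_P$ is a legitimate Carleman kernel representing $P$.

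The main obstacle is partly combinatorial and partly analytic. Combinatorially, when $W$ occurs in $V_{\lambda_1}^G$ with multiplicity $m>1$, the function $\phi_1$ is a partial trace over the whole block $V_{\lambda_1}^G[W]\cong W^{\oplus m}$, and the Schur computation of $PF_v$ must carry the multiplicity index so that the factor $d_{\lambda_1}$ is verified to be the correct normalization reproducing all $m$ copies at once. Analytically, one must justify convergence of the defining integral and $L^2$-boundedness of $P$; rather than estimate the convolution directly, I would first define $P$ on the dense algebraic span of matrix coefficients, where all sums are finite, establish idempotency and self-adjointness there so that $P$ has norm $\le 1$ and extends by continuity, and only afterwards recover the integral representation on $K$-finite vectors, using, as in the preceding Proposition, square integrability of discrete series matrix coefficients and, if a direct estimate is wanted, the Kunze--Stein phenomenon.
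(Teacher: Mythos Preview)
Your proposal is correct and tracks the paper's argument closely: the parametrization of the isotypic component by $Hom_K(W,V_{\lambda_1}^G)$, the verification of $P|_M=\mathrm{id}$ via Schur orthogonality, the appeal to Kunze--Stein for boundedness, and the passage to the matrix kernel by $K$-averaging are all exactly what the paper does (the paper defers the Schur computation to its Appendix~\ref{subsec:A6}).

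The one genuine difference is how you kill $P$ on $M^\perp$. The paper invokes Harish-Chandra's Plancherel theorem to say that the integral $(\dag)$ vanishes on wave packets orthogonal to the discrete part; you instead argue abstractly that a self-adjoint operator with image in $M$ and restricting to the identity on $M$ must be the orthogonal projection. Your route is slightly more elementary, trading the Plancherel theorem for two easy checks ($\phi_1(z)=\overline{\phi_1(z^{-1})}$ and the observation that $Pf$ is manifestly a finite combination of matrix coefficients of $\pi_{\lambda_1}$). Note that your parenthetical ``Harish-Chandra's fact (1)'' is a mild misquotation: fact~(1) in the introduction concerns convolution by the distribution character, not by $\phi_1$; your real reason---that $Pf$ lands in the span of matrix coefficients of $\pi_{\lambda_1}$---is the correct one and stands on its own. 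Your handling of the analytic side (work first on a dense span, then extend) is viable, but since you already grant yourself Kunze--Stein, it is cleaner to use it directly for boundedness as the paper does, rather than defer convergence questions.
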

\begin{proof}

Let $Q$ denote the orthogonal projector onto $V_{\lambda_1}^G [W]$. Then, each nonzero intertwining $K-$map $ b: V_{\lambda_1}^G[W] \rightarrow W$ yields a map $f_b : V_{\lambda_1}^G \rightarrow L^2(G\times_\tau W)$ defined by  $ V_{\lambda_1}^G \ni v \mapsto (G \ni x \mapsto b(Q(\pi_{\lambda_1}(x^{-1}v)):=f_{b}(v, x)). $ It is obvious that $f_b$ gives rise to  an equivariant  embedding of $V_{\lambda_1}^G$ into $L^2(G\times_\tau W)[V_\mu^G]$. After we fix a linear basis  for $Hom_K(V_\mu^G[W], W), $
owing to Frobenius reciprocity, the subspace $L^2(G\times_\tau W)[V_{\lambda_1}^G]$ is equal to the linear span of the image of the functions $f_{b}$ when $b$ runs over the chosen linear basis for $Hom_K(V_{\lambda_1}^G[W], W) $. Next, the   spherical trace function $\phi_1$ is $K-$central, hence, it follows that for each $f$ in  $L^2(G\times_\tau W)$,  the right hand side of $ (\dag)$  belongs to $\Gamma (G\times_\tau W)$.  The hypothesis $G$ is a linear group let us conclude that $\phi_1 \in L^{2-\epsilon}(G)$ for some positive $\epsilon.$  Thus,  the Kunze-Stein phenomena \cite{C} yields the right hand side determines a continuous linear operator on $L^2(G\times_\tau W)$.  Next, as in Appendix~\ref{subsec:A6}, we verify  $\int_G  \phi_1(x^{-1}y) f_{b}(v,y)dy = f_{b}(v,x). $  Harish-Chandra Plancherel's Theorem yields the integral $(\dag)$ evaluated at a wave package   $f$ orthogonal to $L^2(G\times_\tau W)[V_\mu^G] \cap (L^2(G\times_\tau W))_{disc} $  is equal to zero. Thus, the first statement holds. The second statement  readily follows. \end{proof}

\section{Projector onto isotypic component of $res_H(\pi_\lambda)$}

\subsection{Projector onto isotypic components via differential operators}
\label{klambamuasderivative}

In \cite{N},      for a scalar  holomorphic discrete series, it is shown  that  the orthogonal projector onto an isotypic component is  equal to an  infinite order differential operator when restricted to the subspace of smooth vectors. This subsection aims is to analyze the general value  of his results. For this, we analyze ways of expressing the orthogonal projector onto an isotypic component using differential operators. To begin with, we study an example.

\begin{examp} Let $G=SU(1,1)$. We fix as $K=$diagonal matrices $(e^{i\varphi}, e^{-i\varphi}).$ The characters of $K$ are $e^{in\varphi}, n \in \Z.$ The discrete series $(\pi_\lambda, V_\lambda)$ is presented as a subspace of the space of holomorphic functions $f(z)=a_0 +a_1 z+\cdots +a_n z^n + \cdots $ on the unit disk $D=\{ z \in \C : \vert z \vert <1 \}$. The action of $G$ is by homographic transformations. The action of $K$ is such that a typical isotypic subspace is $V_\lambda[V_n^K]=\C z^n.$ It readily follows that $P_{V_\lambda[V_n^K]} (f)(z)= c_{\lambda, n} z^n \frac{\partial^n f}{\partial  z^n}(0)$ and $K_{V_\lambda[V_n^K]}(z,w)=d_{\lambda, n} \bar{z}^n w^n.$ Here, the reproducing kernel is $K_\lambda (z,w)=\frac{k_\lambda}{(1-\bar z w)^\lambda}$ and $K_{V_\lambda[V_n^K]}(z,w) = d_{\lambda,n} \bar{z}^n w^n \frac{\partial^n}{\partial  \bar {z}^n} \frac{\partial^n}{\partial {w}^n}(K_\lambda )(0,0)$. We point out that since $K_\lambda$ is a rational function and $K_{V_\lambda[V_n^K]}$ is a polynomial function, hence, no derivative of $K_\lambda$ is equal to $K_{V_\lambda[V_n^K]}.$  This example expresses   $P_{V_\lambda[V_n^K]} (f)(\cdot) = F_1(\cdot) D(f)(e)$, $ K_{V_\lambda[V_n^K]} (\cdot) = F_2(\cdot )\tilde{D} (K_\lambda)(e,e) $, where  $D,\tilde{D}$ are elements of $U( \mathfrak{sl}(2,\R))$, and, $F_1,F_2$ are smooth functions.  In Fact~\ref{prop:projontoktype}, we show this is a common feature for the orthogonal projector onto a $K-$isotypic component as  well as for its kernel. \end{examp}
\begin{examp} This example is a sequel to the previous one. We would  like to point out that if a differential operator   $D:=f_0 +f_1 \partial + \cdots +f_N \partial^N +\dots$ $(\partial =d/dz)$   is  equal to orthogonal projector onto $\C z^M$,  then, $D$ ought to be of infinite order. In fact, after a direct computation we obtain \begin{multline*}  D=\frac{Z^M}{M!}\partial^M -\frac{Z^{M+1}}{M!}\partial^{M+1}  +\frac{Z^{M+2}}{2 \,M!}\partial^{M+2} \\
+\dots +(-1)^p \frac{Z^{M+p}}{p! \,M!}\partial^{M+p}+ \dots
 \end{multline*}
\end{examp}
\begin{examp} Let $G,H,\,H^2(G,\tau)$ be as in Section~\ref{sec:prelim}. We assume $\pi_\lambda$ is an $H-$ad\-mis\-si\-ble representation. We fix an isotypic component $M$ for the action of $H.$ To follow, we   compute an infinite order differential operator $D$ so that its restriction to the subspace of  $K-$finite vectors agrees with the restriction of the orthogonal projector $P_M$ onto the isotypic component $M$. For this, we label the isotypic components as $M_1, M_2=M, M_3, M_4, \cdots $. We fix a nonzero element $\Omega_1$ in the center of $U(\h)$. Thus, $R:=L_{\Omega_1} $ acts by a constant $c_j$ on the subspace of $H-$smooth vectors in $M_j$. We further assume $c_k \notin \{ c_1, \dots, c_{k-1}\}$ for $k \geq 2$.   We claim: there exists a sequence of numbers $d_0, d_1, \dots $ such that $$ P_M= d_0 (R-c_1)+d_1 (R -c_1)(R-c_2)+ d_2 (R -c_1)(R-c_2)(R -c_3)+ \cdots $$
 In fact,  the hypothesis  $\pi_\lambda$ is $H-$admissible together  with a result of T. Kobayashi  implies that the subspace of $L-$finite vectors in $\,H^2(G,\tau)$ is equal to the subspace of $K-$finite vectors. Next, the hypothesis on $c_1 $ lets us find $d_0$ so that $d_0 (R-c_1)$ is equal to the identity in the subspace of $K-$finite vectors in $M_2$. Then, we find $d_1$ so that $d_0 (R-c_1)+d_1 (R -c_1)(R-c_2)$   is equal to zero on the subspace of $K-$finite vectors in $M_3$, and so on.
\end{examp}
The proof of the next fact shows a technique, which, let us compute  differential operators with smooth coefficients  to describe the orthogonal projector onto a finite dimensional  $K-$invariant subspace $M$ of $\,H^2(G,\tau)$.
\begin{fact}\label{prop:projontoktype}
	Let $(\vartheta, V_\vartheta^K)$ denote an irreducible representation for $K$. Let $P_{V_\lambda[V_\vartheta^K]}$
	denotes the orthogonal projector onto $V_\lambda[V_\vartheta^K]$, let $K_{V_\lambda[V_\vartheta^K] }: G\times G \rightarrow Hom_\C(W,W) $ denotes the kernel that represents $P_{V_\lambda[V_\vartheta^K]}$. We  fix an orthonormal basis $\{w_i\}$ for $W.$  Then, there exists a family
	$\{D_{i, \alpha}\} $ $ 1 \leq i \leq dim W, 1\leq \alpha \leq \dim V_\lambda[V_\vartheta^K] $ of elements in $U(\g)$ and complex valued smooth functions $F_{i,\alpha} $ on $G$ so that $$K_{V_\lambda[V_\vartheta^K] } (y,x)^\star w  =\sum_{i,\alpha} F_{i,\alpha} (x) (L_{D_{i,\alpha}}^{(1)}K_\lambda (y,e)^\star w, w_i)_W \,w_i.$$ Further, $$P_{V_\lambda[V_\vartheta^K]}(f)(x)=  \sum_{i,\alpha} \overline{ F_{i,\alpha} (x)} (L_{D_{i, \alpha}^\star}(f)(e), w_i)_W \,  w_i.$$
\end{fact}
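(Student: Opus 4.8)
The plan is to combine three ingredients that are already in place earlier in the paper. First, $M:=V_\lambda[V_\vartheta^K]$ is a \emph{finite-dimensional} subspace of $\,H^2(G,\tau)$ consisting of $K$-finite (hence smooth) vectors, since discrete series have finite $K$-multiplicities. Second, the orthogonal projector onto a finite-dimensional subspace of a reproducing kernel space is itself a matrix Carleman map assembled from any orthonormal basis. Third, by Harish-Chandra's $U(\g)$-irreducibility of $\,H^2(G,\tau)_{K-fin}$ together with the fact used already in the proof of Lemma~\ref{prop:diffopiskern}, namely that each $K_\lambda(\cdot,e)^\star w_i$ is a nonzero $K$-finite vector generating the module under the left action of $U(\g)$, every $K$-finite vector is a $U(\g)$-derivative of the reproducing kernel.

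First I would fix an orthonormal basis $\{e_\alpha\}_{1\le\alpha\le\dim M}$ of $M$. Finite-dimensionality gives $P_{V_\lambda[V_\vartheta^K]}f=\sum_\alpha (f,e_\alpha)_{V_\lambda}\,e_\alpha$, and evaluating at $x$ exhibits $P_M$ as the Carleman map with matrix kernel $K_M(y,x)^\star w=\sum_\alpha (w,e_\alpha(x))_W\,e_\alpha(y)$. The smoothness of $x\mapsto e_\alpha(x)$ is automatic because the elements of $\,H^2(G,\tau)$ are smooth sections; this is precisely what will provide the smooth coefficient functions $F_{i,\alpha}$ once $e_\alpha(x)$ is expanded in the basis $\{w_i\}$ of $W$.

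Next I would invoke the generation step: for each $\alpha$ there exist $D_{i,\alpha}\in U(\g)$ with $e_\alpha=\sum_i L_{D_{i,\alpha}}^{(1)}K_\lambda(\cdot,e)^\star w_i$, exactly as in the identity $(\natural)$. The manipulation turning an inner product against such a vector into a differential operator evaluated at $e$ is the same integration by parts used there: moving $L_{D_{i,\alpha}}^{(1)}$ across the pairing by its formal adjoint and then applying the reproducing property yields $(f,e_\alpha)_{V_\lambda}=\sum_i\big((L_{D_{i,\alpha}^\star}f)(e),w_i\big)_W$ for every smooth $f$. Substituting this into $P_Mf(x)=\sum_\alpha(f,e_\alpha)\,e_\alpha(x)$ and collecting the smooth scalar factors from $e_\alpha(x)$ into the $F_{i,\alpha}$ gives the stated expression for $P_{V_\lambda[V_\vartheta^K]}(f)(x)$; reading off the kernel of this Carleman map, equivalently substituting $e_\alpha(y)=\sum_i L_{D_{i,\alpha}}^{(1)}K_\lambda(y,e)^\star w_i$ directly into the formula for $K_M(y,x)^\star w$, produces the companion formula for $K_{V_\lambda[V_\vartheta^K]}(y,x)^\star w$.

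The reproducing-kernel formalism and the index bookkeeping needed to match the displayed doubly-indexed form are routine. The one point demanding care, which I expect to be the main obstacle, is that $\big((L_{D_{i,\alpha}^\star}f)(e),w_i\big)_W$ literally involves a derivative of $f$ and so is defined a priori only on smooth vectors, whereas the identity must hold on all of $V_\lambda$. This is resolved as in Lemma~\ref{prop:diffopiskern}: each such functional coincides with $f\mapsto(f,L_{D_{i,\alpha}}^{(1)}K_\lambda(\cdot,e)^\star w_i)_{V_\lambda}$, i.e. with the inner product against a fixed vector of $V_\lambda$, hence extends continuously from the dense subspace of smooth vectors to the whole space. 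Since $P_M$ has finite rank it is automatically continuous, so both sides agree everywhere.
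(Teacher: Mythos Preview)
Your proposal is correct and follows essentially the same line as the paper's proof: both use finite-dimensionality of $V_\lambda[V_\vartheta^K]$, $U(\g)$-irreducibility of $\,H^2(G,\tau)_{K-fin}$ to write $K$-finite vectors as $L_D$-derivatives of $K_\lambda(\cdot,e)^\star w_i$, then the reproducing identity plus integration by parts (the $(\natural)$ maneuver) to pass from the kernel formula to the projector formula, with the extension from smooth to all vectors by continuity. The only organizational difference is that you start from an orthonormal basis $\{e_\alpha\}$ of $M$ and let the smooth coefficients come from $e_\alpha(x)$, whereas the paper starts from the function $H_i(x):=K_M(\cdot,x)^\star w_i\in M$, chooses for each $i$ a subspace $N_{\vartheta,i}\subset U(\g)$ on which $D\mapsto L_D(K_\lambda(\cdot,e)^\star w_i)$ is a linear bijection onto $M$, and defines the $F_{i,\alpha}$ as the coordinates of $R_i^{-1}H_i(x)$ in a basis $\{D_{i,\alpha}\}$ of $N_{\vartheta,i}$; this packaging lands directly on the doubly-indexed form of the statement without the extra index you would need to collapse.
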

Indeed, the identity $$(P_{V_\lambda[V_\vartheta^K]} (f)(x),w_i)_W =\int_G (f(y),K_{V_\lambda[V_\vartheta^K] }(y,x)^\star w_i)_W \, dy \eqno{(d)}$$ shows the function $y \mapsto K_{V_\lambda[V_\vartheta^K] }(y,x)^\star w_i$ is orthogonal to any function $f$ orthogonal to $V_\lambda[V_\vartheta^K].$ Thus, $y \mapsto   K_{V_\lambda[V_\vartheta^K] }(y,x)^\star  w_i$ belongs to $V_\lambda[V_\vartheta^K]$. The function $$G \ni x \mapsto H_i(x):= K_{V_\lambda[V_\vartheta^K] }(\cdot,x)^\star w_i \in V_\lambda[V_\vartheta^K] $$ is smooth, because  $V_\lambda[V_\vartheta^K]$  is a finite dimensional vector space and the equality $(d)$ shows that the evaluation of  coordinate functions in $H_i(x)$ yields a smooth function on $G$. Next, we set $k_w(\cdot) :=K_\lambda (\cdot ,e)^\star w$. Then, $k_w$ belongs to $V_\lambda [W]$,  and it is non zero for each nonzero $w$. We fix $i : 1 \leq i \leq \dim W.$  The $U(\g)-$irreducibility for $(V_\lambda)_{K-fin}$ implies there exists a finite dimensional vector subspace $N_{\vartheta, i}$ of $U(\g)$ so that the linear map  $R_i : N_{\vartheta,i} \rightarrow V_\lambda [V_\vartheta^K]$ defined by $R_i(D):=L_D (k_{w_i}) $ is bijective. Therefore, the composition $ R_i^{-1} H_i$ is a smooth function on $G$. To follow, we fix a linear basis $D_{i,\alpha} , 1 \leq \alpha \leq \dim V_\lambda[V_\vartheta^K]$ for $N_{\vartheta, i}$. Hence, $ R_i^{-1} H_i (x)= \sum_{\alpha} F_{i,\alpha}(x) D_{i,\alpha}$ where $F_{i,\alpha}$ are complex valued smooth functions on $G.$ Thus, \begin{multline*} K_{V_\lambda[V_\vartheta^K] }(y,x)^\star w_i =H_i(x)(y)=R_i ( R_i^{-1} (H_i(x)))(y)\\ = \sum_\alpha F_{i,\alpha} (x) L_{D_{i,\alpha}}^{(1)}(K_\lambda)(y,e)^\star w_i \, \forall y,x \in G. \end{multline*}
\begin{multline*}K_{V_\lambda[V_\vartheta^K] }(y,x)w=\sum_i (K_{V_\lambda[V_\vartheta^K] }(y,x)w,w_i)_W w_i \\ =\sum_i (w,K_{V_\lambda[V_\vartheta^K] }(y,x)^\star w_i)_W w_i  = \sum_{i,\alpha} (w, F_{i,\alpha}(x) L_{D_{i,\alpha}}^{(1)}K_\lambda (y,e)^\star w_i)_W w_i \\ =\sum_{i,\alpha} (\overline{F_{i,\alpha}(x)} L_{\bar D_{i,\alpha}}^{(1)}K_\lambda (y,e)w, w_i)_W w_i.
\end{multline*}
For $f \in V_\lambda^\infty$, the following equalities hold.
\begin{multline*} P_{V_\lambda[V_\vartheta^K]} (f)(x) \\=\sum_i (P_{V_\lambda[V_\vartheta^K]} (f)(x),w_i)_W w_i =\sum_i \int_G (f(y,K_{V_\lambda [V_\vartheta^K]}(y,x)^\star w_i)_W dy w_i \\ =\sum_i \int_G (f(y),\sum_\alpha F_{i,\alpha}(x) L_{D_{i,\alpha}}^{(1)}K_\lambda (y,e)^\star w_i)_W dy w_i \\ =\sum_i \int_G (\sum_{\alpha} \overline{F_{i,\alpha}(x)} L_{D_{i,\alpha}^\star}(f)(y),  K_\lambda (y,e)^\star w_i)_W dy w_i \\ =\sum_i ( \sum_{\alpha} \overline{F_{i,\alpha}(x)} L_{D_{i,\alpha}^{\star}}(f)(e), w_i)_W  w_i.
\end{multline*}
Finally, the equality of the first and last member extends to any  $f \in H^2(G,\tau)$
  owing to that,  for a fixed $x \in G$,  both members are continuous linear functionals on $H^2(G,\tau)$ and they agree on smooth vectors.   This concludes the proof of Fact~\ref{prop:projontoktype}.
\begin{fact} \label{prop:projectorviado}
	A completely similar result to Fact~\ref{prop:projontoktype} holds after we replace $V_\lambda[V_\vartheta^K]$ by a finite dimensional subspace $M$ of $(V_\lambda)_{K-fin}.$ Of course, the elements $F_{i,\alpha}, D_{i, \alpha}$   depend on $M$ and they  are highly non unique.
\end{fact}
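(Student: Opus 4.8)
The plan is to re-run the proof of Fact~\ref{prop:projontoktype} essentially verbatim, isolating the one place where the hypothesis ``$V_\lambda[V_\vartheta^K]$ is a $K$-isotypic component'' was genuinely used and checking that it only required $M$ to be a finite dimensional subspace of $(V_\lambda)_{K-fin}$. First I would observe that since $M$ is finite dimensional it is closed in $V_\lambda$, so the orthogonal projector $P_M$ is well defined; moreover $M\subset (V_\lambda)_{K-fin}\subset V_\lambda^\infty$, so every element of $M$ is a smooth $W$-valued function on $G$. Writing $P_M f=\sum_j (f,e_j)_{V_\lambda}\, e_j$ for an orthonormal basis $\{e_j\}$ of $M$ exhibits $P_M$ as a finite rank, hence Carleman, operator with a smooth matrix kernel $K_M$, and produces the defining identity playing the role of $(d)$ in Fact~\ref{prop:projontoktype}.

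Next I would transfer, without change, the two preliminary steps. From the defining identity for $K_M$, for each basis vector $w_i$ the function $y\mapsto K_M(y,x)^\star w_i$ is orthogonal to everything orthogonal to $M$, hence lies in $M$; and $x\mapsto H_i(x):=K_M(\cdot,x)^\star w_i$ is a smooth map into the finite dimensional space $M$, because evaluating its coordinates against a basis of $M$ yields smooth scalar functions on $G$. Up to this point nothing distinguishes $M$ from an isotypic component, and the argument is literally the one already given.

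The crucial step is the surjectivity used to build $R_i$. Setting $k_{w_i}:=K_\lambda(\cdot,e)^\star w_i$, a nonzero $K$-finite vector, the $U(\g)$-irreducibility of $(V_\lambda)_{K-fin}$ gives $U(\g)\,k_{w_i}=(V_\lambda)_{K-fin}$. Since $M\subset (V_\lambda)_{K-fin}$, one may select a finite dimensional subspace $N_{M,i}\subset U(\g)$ on which $R_i(D):=L_D(k_{w_i})$ restricts to a linear isomorphism $R_i:N_{M,i}\to M$. This is precisely the only point where the isotypic hypothesis entered in Fact~\ref{prop:projontoktype}, and it survives unchanged for an arbitrary finite dimensional $M\subset (V_\lambda)_{K-fin}$; this is exactly what makes the passage from Fact~\ref{prop:projontoktype} to Fact~\ref{prop:projectorviado} a formality rather than a genuinely new result, so I do not expect a real obstacle here.

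Finally, fixing a basis $\{D_{i,\alpha}\}$ of $N_{M,i}$ and writing $R_i^{-1}H_i(x)=\sum_\alpha F_{i,\alpha}(x)D_{i,\alpha}$ with smooth $F_{i,\alpha}$, I would reproduce the two displayed computations: applying $R_i$ gives the kernel formula $K_M(y,x)^\star w=\sum_{i,\alpha}F_{i,\alpha}(x)(L_{D_{i,\alpha}}^{(1)}K_\lambda(y,e)^\star w,w_i)_W\,w_i$, and moving the derivatives onto $f$ via the reproducing property of $K_\lambda$ together with the adjoint relation gives $P_M(f)(x)=\sum_{i,\alpha}\overline{F_{i,\alpha}(x)}(L_{D_{i,\alpha}^\star}(f)(e),w_i)_W\,w_i$ for smooth $f$, extended to all $f\in\,H^2(G,\tau)$ since both sides are continuous functionals agreeing on the dense subspace of smooth vectors. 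The only subtlety worth flagging is that $M$ need not be $K$-invariant, so $K_M$ lacks the $x$-variable equivariance that $K_{V_\lambda[V_\vartheta^K]}$ enjoyed; but that equivariance is never invoked in the argument, which is also why the non-uniqueness of the $F_{i,\alpha}$ and $D_{i,\alpha}$ asserted in the statement is genuinely unavoidable.
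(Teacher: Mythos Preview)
Your proposal is correct and is exactly the approach the paper has in mind: the paper gives no separate proof of Fact~\ref{prop:projectorviado}, merely asserting that the argument for Fact~\ref{prop:projontoktype} carries over verbatim, and you have correctly identified that the only place the isotypic hypothesis enters is through $M\subset (V_\lambda)_{K-fin}$, which is precisely the hypothesis assumed. Your remark that $K$-invariance of $M$ is never used is apt and explains why nothing further is needed.
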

On the basis  of Fact~\ref{prop:projontoktype}, Fact~\ref{prop:projectorviado}, we say that a linear map $T$  from a subspace of $\,H^2(G,\tau)$ into  $\,H^2(G,\tau)$ is expressed {\it by means of  differential operators} if for each orthonormal basis $\{w_i \}$ for $W$, it is possible to find finitely many  smooth complex valued functions $f_{i,\alpha}$ on $G$ and a finite family $\{D_{i,\alpha}\}$ of elements in $U(\g)$ so that for every $f$ in the domain of $T$ and $x \in G$  we have $T(f)(x)=\sum_{i,\alpha} f_{i,a}(x) (L_{D_{i,\alpha}}(f)(e), w_i)_W w_i.$

The next result shows a relation  between discretely decomposable and orthogonal projectors represented by means of   differential operators.
\begin{prop}\label{prop:disdecviaprodo} We assume $res_H(\pi_\lambda)$ is an $H-$admissible representation. Then, for any finite dimensional subspace $M$ of $(V_\lambda)_{L-fin}$ the orthogonal projector onto $M$ is expressed  by means of differential operators. Conversely, if for some Harish-Chandra parameter $\mu$ for $H$,  the orthogonal projector onto some nontrivial finite dimensional   subspace of $(V_\lambda[V_\mu^H])_{L-fin}$ is expressed by means of  differential operators, then,  $res_H(\pi_\lambda)$ is discretely decomposable. Furthermore, if for each nontrivial finite dimensional $L-$invariant subspace of $V_\lambda[V_\mu^H]$, its orthogonal projector is expressed by means  of differential operators, then, the multiplicity of $V_\mu^H$ in $res_H(\pi_\lambda)$  is finite.
\end{prop}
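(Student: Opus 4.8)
The plan is to deduce all three assertions from the kernel computation underlying Fact~\ref{prop:projontoktype} (and its extension Fact~\ref{prop:projectorviado}), combined with the equivalence, for discrete series, of $H$-admissibility and $L$-admissibility \cite{DV}. For the direct statement, $H$-admissibility gives $L$-admissibility, so \cite[Prop. 1.6]{K3} yields $(V_\lambda)_{L-fin}=(V_\lambda)_{K-fin}$; thus any finite dimensional $M\subset (V_\lambda)_{L-fin}$ is a finite dimensional subspace of $(V_\lambda)_{K-fin}$, and Fact~\ref{prop:projectorviado} immediately gives that $P_M$ is expressed by means of differential operators.

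For the first converse I would run the computation of Fact~\ref{prop:projontoktype} backwards. Writing the hypothesis as $P_M(f)(x)=\sum_{i,\alpha} f_{i,\alpha}(x)(L_{D_{i,\alpha}}(f)(e),w_i)_W w_i$ and inserting the identity $(L_{D}f(e),w)_W=\int_G (f(y),L_{D^\star}^{(1)}K_\lambda(y,e)^\star w)_W\,dy$ (as in the display preceding $(\natural)$), I read off the kernel of $P_M$ as $K_M(y,e)^\star w_j=\sum_\alpha \overline{f_{j,\alpha}(e)}\,L_{D_{j,\alpha}^\star}^{(1)}K_\lambda(y,e)^\star w_j$. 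Because $K_\lambda(\cdot,e)^\star w_j\in V_\lambda[W]$ is $K$-finite and $L_D$ preserves $K$-finiteness on $(V_\lambda)_{K-fin}$, each column $K_M(\cdot,e)^\star w_j$ is a $K$-finite vector lying in the image $M$ of $P_M$; since $M$ is nontrivial, at least one such column $v$ is nonzero.

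To finish discrete decomposability I would argue as in Proposition~\ref{prop:adviadiffop} (i). As $v\in (V_\lambda[V_\mu^H])_{L-fin}$ is an $L$-finite, hence $H$-smooth, vector of a discrete copy of the discrete series $V_\mu^H$, it is a $\z(U(\h))$-eigenvector of eigenvalue $\chi_\mu$; therefore $U(\h)v$ is a $U(\h)$-finitely generated, $\z(U(\h))$-finite $(\h,L)$-submodule of $(V_\lambda)_{K-fin}$. By \cite[Corollary 3.4.7 and Theorem 4.2.1]{Wa1} it has a finite composition series and hence contains an irreducible $(\h,L)$-submodule; \cite[Lemma 1.5]{K3} then makes the $(\g,K)$-module $\h$-algebraically decomposable, and unitarity with \cite[Theorem 4.2.6]{K4} upgrades this to $res_H(\pi_\lambda)$ discretely decomposable. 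For the multiplicity claim, the same extraction applied to an \emph{arbitrary} finite dimensional $L$-invariant $M\subset V_\lambda[V_\mu^H]$ shows all columns of $P_M$ are $K$-finite, and since they span $M$ we get $M\subset(V_\lambda)_{K-fin}$. Assuming the multiplicity of $V_\mu^H$ infinite, I would reproduce the construction of Proposition~\ref{prop:adviadiffop} (ii): take orthogonal isometric immersions $T_n$ of $H^2(H,\sigma)$, a norm one lowest $L$-type vector $g_0$, and $v_n:=T_n(g_0)$; one then selects $(a_n)\in\ell^2$ so that $v_0:=\sum_n a_n v_n$ is $L$-finite (its $L$-span lies in the finite dimensional span of the vectors $\sum_n a_n T_n(e_k)$, with $e_k$ a basis of the $L$-span of $g_0$) yet not $K$-finite. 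Then the $L$-span $M_0$ of $v_0$ is a nontrivial finite dimensional $L$-invariant subspace of $V_\lambda[V_\mu^H]$ violating $M_0\subset (V_\lambda)_{K-fin}$, a contradiction; hence the multiplicity is finite.

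The delicate point is the last construction, namely choosing $(a_n)$ so that $v_0$ is genuinely not $K$-finite. Here I would use that the $v_n$ form an infinite orthonormal set while each $K$-isotypic component $V_\lambda[B]$ is finite dimensional, so infinitely many $B$ satisfy $P_B v_n\neq 0$ for some $n$; a Baire category (generic) choice of $(a_n)$ in $\ell^2$ then forces $P_B v_0\neq 0$ for infinitely many $B$, which is exactly the failure of $K$-finiteness. Everything else is a reassembly of the kernel identities and the Kobayashi--Harish-Chandra decomposability machinery already used in Proposition~\ref{prop:adviadiffop}.
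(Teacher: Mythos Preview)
Your argument follows the paper's route: the direct implication via $(V_\lambda)_{L\text{-fin}}=(V_\lambda)_{K\text{-fin}}$ and Fact~\ref{prop:projectorviado}; the first converse by reading off $K$-finite kernel columns lying in $M\subset V_\lambda[V_\mu^H]$ and feeding them into the Harish-Chandra/Kobayashi machinery; and the multiplicity bound by manufacturing an $L$-finite, non-$K$-finite vector exactly as in Proposition~\ref{prop:adviadiffop}(ii). Your Baire-category justification that a generic $(a_n)\in\ell^2$ produces a non-$K$-finite $v_0$ is in fact more explicit than the paper's terse ``given that $\pi_\lambda$ is $K$-admissible, $w$ is not $K$-finite.''

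There is one small gap. In the first converse you compute the kernel columns only at $x=e$ and then assert ``since $M$ is nontrivial, at least one such column $v$ is nonzero.'' But nothing prevents every element of $M$ from vanishing at $e$, in which case $K_M(\cdot,e)^\star w_j=0$ for all $j$. The fix is immediate: the identity you wrote holds at every $x$, namely $K_M(\cdot,x)^\star w_j=\sum_\alpha \overline{f_{j,\alpha}(x)}\,L_{D_{j,\alpha}^\star}^{(1)}K_\lambda(\cdot,e)^\star w_j$, and as $x,j$ vary these columns span $M$ (and all sit in the fixed finite-dimensional $K$-finite span of the $L_{D_{j,\alpha}^\star}^{(1)}K_\lambda(\cdot,e)^\star w_j$); this is also what you need later to conclude $M\subset(V_\lambda)_{K\text{-fin}}$. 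A second, purely cosmetic point: ``$L$-finite, hence $H$-smooth'' is true in this isotypic setting, but since you have already shown the vector is $K$-finite, the cleaner chain is $K$-finite $\Rightarrow$ $G$-smooth $\Rightarrow$ $H$-smooth, which avoids the appearance of circularity (you do not yet know $v$ sits in a single discrete copy of $V_\mu^H$).
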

\begin{proof} For the direct implication, the hypothesis $H-$admissible let us  apply \cite[Proposition 1.6]{K3}. Thus, $(V_\lambda)_{L-fin}=(V_\lambda)_{K-fin}.$ Whence, Fact~\ref{prop:projectorviado} yields there exists
	$\{ D_{i,\alpha} \}_, 1\leq i \leq \dim W,  1 \leq \alpha \leq  \dim M $
	of elements in  $U(\g)$ and complex valued smooth functions $F_{i,\alpha}$ on $G$, so that for every $f \in V_\lambda$ we have $$P_M(f)(x) =\sum_{i,\alpha} \overline{F_{i,\alpha}(x)}(L_{D_{i,\alpha}^\star} f (e),w_i)_W w_i.$$ Whence, $P_M$ is computed by means of differential operators. For the converse statement, the hypothesis on $M$ gives   an expression for $P_M$ as above and for $K_{P_M}$ ($w,v \in W$) we have $$(K_{P_M}(y,x)^\star w,v)_W= \sum_{i,\alpha} F_{i,\alpha}(x)   (L_{D_{i,\alpha}}^{(1)}K_\lambda (y,e)^\star w, w_i)_W ( w_i,v)_W.$$ Hence,  for a fixed $x \in G$, the function $y \mapsto K_{P_M}(y,x)^\star w$ is $K-$finite, because the above expression gives that  the function $y \mapsto K_{P_M}(y,x)^\star w$ is equal to a finite sum of     $K-$finite vectors of type $y\mapsto L_D K_\lambda(y,e)^\star w. $
	Now, any $H-$smooth element in $V_\lambda [V_\mu^H]_{L-fin}$ is $\z(U(\h))-$finite, and, by hypothesis, $M \subset V_\lambda [V_\mu^H]_{L-fin}$. Thus, $K_{P_M}(\cdot ,x)^\star w$ is  $K-$finite and  $\z(U(\h))-$finite. Whence, as in previous Theorems, we conclude $res_H(\pi_\lambda)$ is algebraically decomposable. Thus, $res_H(\pi_\lambda)$ is discretely decomposable. To follow, we show the last statement in  Proposition~\ref{prop:disdecviaprodo}. The proof goes parallel to the proof of Theorem~\ref{prop:diffop} ii). That is, we assume $V_\lambda[V_\mu^H] $ is not $H-$admissible and we built up a finite dimensional $L-$invariant subspace $M$ so that $P_M$ is not expressed by means of  differential operators. Let $T_j :V_\mu^H \rightarrow V_\lambda[V_\mu^H] , $ $j=1,\dots$  be isometric immersions  $H-$maps so that the  subspaces $T_j(V_\mu^H)$ are pairwise orthogonal. We fix $v_0 \in  V_\mu^H[V_\sigma^L ]$ of norm one. There are two possibilities, either every $T_j(v_0)$ is a $K-$finite vector, or at least one $T_j(v_0)$ is not a $K-$finite vector. In the second  case we set $w:=T_j(v_0) $, and  $M=\text{linear \,span}\, \pi_\lambda (L)w$. In the first case, we choose a sequence of positive real numbers $(a_n)_n $ so that $w:=\sum_n a_n v_n$ is nonzero and we set $M=\text{linear \,span}\, \pi_\lambda (L)w$. Given that $\pi_\lambda$ is $K-$admissible, we have $w$ is  not a $K-$finite vector. As in the proof of Theorem~\ref{prop:diffop}, $M$ is a finite dimensional subspace of $V_\lambda[V_\mu^H]$ and $P_M$ is not the restriction of a differential operator because $K_{P_M}(\cdot,e)^\star v_0=w $ is not a $K-$finite vector.  This concludes the proof of Proposition~\ref{prop:disdecviaprodo} \end{proof}
 Next,  after we assume  $res_H(\pi_\lambda)$ is an $H-$admissible representation, we show the orthogonal projector onto an isotypic component for $H$  can be expressed by means of  an infinite degree differential operator on the subspace of smooth vectors for $G$. The result  generalizes   \cite[Theorem 3.10]{N} for $\pi_\lambda$ a scalar holomorphic discrete series.\\
The set up for the next Theorem   $ G,K,  (\tau, W),  \,H^2(G,\tau)=V_\lambda, \pi_\lambda, H,L,  V_\mu^H \\ = H^2(H,\sigma), \pi_\mu^H $  are as in Section~\ref{sec:prelim}.
\begin{thm} \label{prop:projlamuisdo} We assume $res_H(\pi_\lambda)$ is an admissible representation for $H.$ We denote by $\sigma_1, \sigma_2, \cdots $ the $L-$types for $V_\mu^H$.  Let $P_{\lambda, \mu}$ denote the orthogonal projector on the isotypic component $V_\lambda [V_\mu^H]$. Then, for $ j \in \mathbb N$, $1\leq i \leq \dim W$,    $1 \leq \alpha \leq \dim V_\lambda[V_\mu^H][V_{\sigma_j}^L]$,  there exists a family $\{ D_{j,i,\alpha} \},$   of elements in $U(\g)$ and complex valued smooth functions $F_{j,i, \alpha}$ on $G$,  such that for each smooth vector $f \in V_\lambda$ we have $$P_{\lambda,\mu} (f)(x)=\sum_{j\in \mathbb N} \sum_{ i, \alpha} F_{j,i,\alpha}(x) (L_{D_{j, i,\alpha}} (f)(e),w_i)_W\,\, w_i . $$  The convergence is  in smooth topology. Furthermore, for a $K-$finite function  $f$, the sum on the right is finite.
\end{thm}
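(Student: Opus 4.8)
The plan is to decompose the $H$-isotypic projector $P_{\lambda,\mu}$ as an orthogonal sum of the projectors onto the $L$-isotypic pieces of $V_\lambda[V_\mu^H]$, to express each of those by Fact~\ref{prop:projectorviado}, and then to control the resulting infinite series by elliptic regularity. First I would exploit the hypothesis that $res_H(\pi_\lambda)$ is $H$-admissible: by \cite{DV} this is equivalent to $L$-admissibility, so \cite[Proposition 1.6]{K3} gives the crucial identity $(V_\lambda)_{L-fin}=(V_\lambda)_{K-fin}$. Writing $M_j:=V_\lambda[V_\mu^H][V_{\sigma_j}^L]$ for the $\sigma_j$-isotypic component of $V_\lambda[V_\mu^H]$ under $L$, I would next check that each $M_j$ is finite-dimensional: $H$-admissibility makes $V_\lambda[V_\mu^H]$ a finite multiple of $V_\mu^H$, and since $V_\mu^H$ is an irreducible unitary representation of $H$ it is $L$-admissible, so every $L$-type occurs in it with finite multiplicity; hence $\dim M_j<\infty$. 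Moreover $M_j$ consists of $L$-finite vectors, so $M_j\subset (V_\lambda)_{L-fin}=(V_\lambda)_{K-fin}$.

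With this in hand, each $M_j$ is a finite-dimensional subspace of $(V_\lambda)_{K-fin}$, so Fact~\ref{prop:projectorviado} applies and expresses the orthogonal projector $P_{M_j}$ by means of differential operators: there are elements $D_{j,i,\alpha}\in U(\g)$ and complex valued smooth functions $F_{j,i,\alpha}$ on $G$ with $P_{M_j}(f)(x)=\sum_{i,\alpha}F_{j,i,\alpha}(x)(L_{D_{j,i,\alpha}}(f)(e),w_i)_W\,w_i$, where I absorb the conjugation and the adjoint occurring in Fact~\ref{prop:projontoktype} into the names $F_{j,i,\alpha}$ and $D_{j,i,\alpha}$. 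Since the $\sigma_j$ exhaust the $L$-types of $V_\mu^H$, the subspaces $M_j$ are mutually orthogonal with closed algebraic sum equal to $V_\lambda[V_\mu^H]$; therefore $P_{\lambda,\mu}=\sum_j P_{M_j}$ in the strong sense (for each $f$, using $P_{M_j}(f)=P_{M_j}(P_{\lambda,\mu}(f))$), and summing the displayed formulas over $j$ yields the asserted expression. I would then dispose of the finiteness claim: if $f$ is $K$-finite it is $L$-finite, hence meets only finitely many $L$-isotypic components; as $P_{\lambda,\mu}$ commutes with $L$, only finitely many summands $P_{M_j}(f)$ are nonzero, so the right-hand sum is finite.

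The main obstacle is the convergence in the smooth topology for a general smooth vector $f$, since a priori the series is only $L^2$-convergent. Here I would use that every partial sum $S_N(f)=\sum_{j\le N}P_{M_j}(f)$ and the limit $P_{\lambda,\mu}(f)$ all lie inside the single space $\,H^2(G,\tau)$, which is the $L^2$-kernel of the elliptic operator $R_{\Omega_G}$ on $G/K$. The orthogonal-decomposition identity gives $S_N(f)\to P_{\lambda,\mu}(f)$ in $L^2$, and the regularity result of \cite{At} then upgrades this to uniform convergence on compact sets together with all derivatives, i.e.\ convergence in the smooth topology. Smoothness of the limit as a function is automatic, being an element of $\,H^2(G,\tau)$; its being a $G$-smooth \emph{vector} follows, if needed, from the observation that under $H$-admissibility the $H$-smooth vectors of an isotypic component are $G$-smooth. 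The one point deserving care is precisely to keep all terms in the same elliptic kernel, so that \cite{At} is literally applicable and the passage from the $L^2$-series to the Fr\'echet-convergent series of (infinite-order) differential expressions is justified.
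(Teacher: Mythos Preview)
Your argument follows the same skeleton as the paper's: reduce to the finite-dimensional $L$-isotypic pieces $M_j=V_\lambda[V_\mu^H][V_{\sigma_j}^L]$ via $H$-admissibility $\Rightarrow$ $L$-admissibility $\Rightarrow$ $(V_\lambda)_{L-fin}=(V_\lambda)_{K-fin}$, apply Fact~\ref{prop:projectorviado} to each $P_{M_j}$, and sum. The finiteness claim for $K$-finite $f$ is handled the same way.

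The one substantive divergence is the convergence step. The paper does \emph{not} use \cite{At}; it invokes Harish-Chandra \cite[Lemma~5]{HC2}, which says that the $K$-Fourier series of a smooth vector converges \emph{absolutely in the Fr\'echet topology on $V_\lambda^\infty$} (seminorms $f\mapsto\|L_D f\|_{L^2}$, $D\in U(\g)$). Since $L$-finite equals $K$-finite here, the series $\sum_j P_{M_j}(f)$ is a rearrangement of a subseries of the absolutely convergent $K$-Fourier series of $P_{\lambda,\mu}(f)$, hence converges in that Fr\'echet topology. Your route through \cite{At} only yields $C^\infty(G)$-convergence (uniform on compacta, together with all derivatives), which is strictly weaker: it does not control the $L^2$-norms $\|L_D(S_N f - P_{\lambda,\mu}f)\|$. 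In this paper ``smooth topology'' means the Fr\'echet topology on $V^\infty$ (see the proof of Proposition~\ref{prop:propertieskt} and the discussion in the proof of Proposition~\ref{prop:klmchar}), so as written your argument establishes a weaker mode of convergence than the theorem asserts. The fix is exactly the paper's: cite \cite{HC2} for absolute convergence of the $K$-Fourier series and observe that, under $L$-admissibility, the $M_j$-decomposition is a regrouping of it.
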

\begin{proof} In \cite{DV}, we find a proof that the hypothesis $H-$admissible implies $V_\lambda$ is also $L-$admissible. Thus, our hypothesis leads that  the isotypic subspaces $V_\lambda[V_\mu^H][V_{\sigma_j}^L]$ are nonzero, finite dimensional and we have the Hilbert sum $V_\lambda[V_\mu^H]=\oplus_j V_\lambda[V_\mu^H][V_{\sigma_j}^L]$. Furthermore, the hypothesis of being $H-$ad\-mis\-si\-ble forces all the $L-$finite vectors are $K-$finite vectors \cite{K3}, hence, each subspace $V_\lambda[V_\mu^H][V_{\sigma_j}^L]$ is contained in $(V_\lambda)_{K-fin}.$ We now apply  Proposition~\ref{prop:disdecviaprodo} to obtain finitely many elements $D_{j,i,\alpha}, i=1,\cdots, \dim W, 1 \leq \alpha \leq \dim V_\lambda[V_\mu^H][V_{\sigma_j}^L] $ in $ U(\g)$ so that \\ \phantom{xxxxxx} $P_{V_\lambda[V_\mu^H][V_{\sigma_j}^L]} (f)(x)= \sum_{i, \alpha} F_{j,i,\alpha} (x) (L_{D_{j,i,\alpha}} (f)(e), w_i)_W \, w_i.$ \\ Next, the series $\sum_j P_{V_\lambda[V_\mu^H][V_{\sigma_j}^L]} $ converges pointwise to $P_{\lambda, \mu}$. Further, in \cite{HC2}, we find a proof that for  a smooth vector $f$, the convergence is absolute  in the smooth topology.  Whence, we have obtained the first statement in Theorem~\ref{prop:projlamuisdo}. The second statement follows because $L-$admissible implies each isotypic component for an irreducible representation of  $K$ is contained in a finite sum of isotypic components for $L.$
\end{proof}

\subsection{Kernel for the projector onto $\,H^2(G,\tau)[V_\mu^H]$}\label{sec:kerh2hw}

  Let $G,H, (\tau, W)$, $\\ \,H^2(G,\tau), K_\lambda $ be as  in Section~\ref{sec:prelim}. Let $(\pi_\mu^H,V_\mu^H)$ denote an irreducible square integrable representation for $H.$ Let $\,H^2(G,\tau)[V_\mu^H]$ denote  isotypic component for $V_\mu^H$ in $\,H^2(G,\tau)$ (cf. Notation).  Since, $\,H^2(G,\tau)$ is a reproducing kernel space, we have that $\,H^2(G,\tau)[V_\mu^H]$ is a reproducing kernel space.    Thus, the orthogonal projector $P_{\lambda ,\mu}$ onto $\,H^2(G,\tau)[V_\mu^H]$ is a represented by a Carleman matrix kernel $K_{\lambda, \mu}$. In this section, under certain hypothesis,  we  express the matrix  kernel $K_{\lambda, \mu}$ in terms of the matrix   kernel $K_\lambda$ and the distribution character $\Theta_{\pi_\mu^H} $ of the representation $(\pi_\mu^H, V_\mu^H). $   We are quite convinced the formula is true under more general hypothesis. The proposed formula is
\begin{prop}\label{prop:klmchar} Assume the restriction to $H$ of $\pi_\lambda$ is an $H-$admissible representation. Then, $P_{\lambda, \mu}$     is equal to the Carleman  operator given by the kernel $$ (y,x)\mapsto d_\mu \Theta_{\pi_\mu^H} (h \mapsto K_\lambda (h^{-1}y,x) )=d_\mu \Theta_{(\pi_\mu^H)^\star}(h \mapsto K_\lambda (hy,x)).$$
\end{prop}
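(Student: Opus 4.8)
The plan is to identify $P_{\lambda,\mu}$ with the Harish-Chandra character projector for the restricted representation $(\pi_\lambda|_H, V_\lambda)$ and then read off its Carleman kernel by feeding the reproducing property of $K_\lambda$ into the convolution formula. Since $res_H(\pi_\lambda)$ is $H$-admissible it is in particular discretely decomposable, so the $V_\mu^H$-isotypic subspace $\,H^2(G,\tau)[V_\mu^H]$ is a genuine closed $H$-invariant subspace and $P_{\lambda,\mu}$ is its orthogonal projector. By the first of the two Harish-Chandra facts recalled in the introduction — convolution by the distribution character gives the projection onto the isotypic component — we have, as operators on $V_\lambda$,
\[
P_{\lambda,\mu} = d_\mu \int_H \overline{\Theta_{\pi_\mu^H}(h)}\, \pi_\lambda(h)\, dh = d_\mu \int_H \Theta_{\pi_\mu^H}(h^{-1})\, L_h\, dh,
\]
where I used $\overline{\Theta_{\pi_\mu^H}(h)} = \Theta_{\pi_\mu^H}(h^{-1})$ for the unitary representation $\pi_\mu^H$.

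Next I would carry out the kernel computation. Evaluating at $x$ with $(L_h f)(x) = f(h^{-1}x)$ and then substituting $h \mapsto h^{-1}$ (legitimate since $H$ is unimodular) gives $P_{\lambda,\mu}f(x) = d_\mu \int_H \Theta_{\pi_\mu^H}(h)\, f(hx)\, dh$. I then insert the reproducing identity $f(hx) = \int_G K_\lambda(y,hx)\, f(y)\, dy$, valid for $f \in \,H^2(G,\tau)$. The key structural input is the $G$-invariance $K_\lambda(gy,gx) = K_\lambda(y,x)$, immediate from the $G$-equivariance of $P_\lambda$, which converts $K_\lambda(y,hx)$ into $K_\lambda(h^{-1}y,x)$. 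After interchanging the $H$- and $G$-integrations by Fubini,
\[
P_{\lambda,\mu}f(x) = \int_G \Bigl[\, d_\mu \int_H \Theta_{\pi_\mu^H}(h)\, K_\lambda(h^{-1}y,x)\, dh \,\Bigr] f(y)\, dy,
\]
so the kernel is exactly $d_\mu\,\Theta_{\pi_\mu^H}\bigl(h \mapsto K_\lambda(h^{-1}y,x)\bigr)$, the first expression in the statement. The second expression follows from the contragredient identity $\Theta_{(\pi_\mu^H)^\star}(\phi) = \Theta_{\pi_\mu^H}(\check\phi)$, $\check\phi(h)=\phi(h^{-1})$ (equivalently $\Theta_{(\pi_\mu^H)^\star}(h)=\Theta_{\pi_\mu^H}(h^{-1})$ as locally integrable functions): applied to $\phi(h) = K_\lambda(hy,x)$ it yields $\check\phi(h) = K_\lambda(h^{-1}y,x)$, hence the two displayed kernels coincide.

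The hard part will be the analysis rather than the algebra. The character $\Theta_{\pi_\mu^H}$ is only a locally integrable, globally non-integrable function on $H$, so neither the convergence of the operator integral defining $P_{\lambda,\mu}$, nor the pairing $\Theta_{\pi_\mu^H}(h \mapsto K_\lambda(h^{-1}y,x))$, nor the Fubini interchange is automatic. All three rest on showing that the function $h \mapsto K_\lambda(h^{-1}y,x)$ decays fast enough on $H$ to be integrated against a discrete-series character, i.e. that the restriction of the reproducing kernel behaves like a \emph{cusp form} on $H$. This rapid decay is precisely what $H$-admissibility supplies: admissibility forces the $L$-finite (equivalently $K$-finite) vectors in each $\pi_\mu^H$-block to restrict to square-integrable functions on $H$, against which the matrix coefficients building $\Theta_{\pi_\mu^H}$ pair absolutely. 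I would make this rigorous by reducing, through discrete decomposability, to a single copy of $\pi_\mu^H$, invoking Schur orthogonality and the $L^2$-decay of its matrix coefficients to justify convergence and the interchange on the dense subspace of $K$-finite (hence $L$-finite) vectors, and then extending to all of $V_\lambda$ by continuity of $P_{\lambda,\mu}$.
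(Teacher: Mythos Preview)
Your approach is essentially the paper's: both identify the projector with a character convolution and read off the Carleman kernel via the reproducing property of $K_\lambda$. The paper's organization differs slightly in that it first establishes an auxiliary expression for an arbitrary orthogonal projector on a closed subspace (Lemma~\ref{lem:ce}) and then reduces to the matrix-coefficient identity $d_\mu\,\Theta_{(\pi_\mu^H)^\star}\bigl(h\mapsto (\pi_\lambda(hy)v,\pi_\lambda(x)w)\bigr)=(P_{\lambda,\mu}(\pi_\lambda(y)v),\pi_\lambda(x)w)$ for smooth $v,w$, whereas you go more directly; both routes land on the same orthogonality argument.

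Your analytic sketch has the right shape but two points deserve sharpening. First, what you need is not cusp-form decay of $h\mapsto K_\lambda(h^{-1}y,x)$ but merely \emph{temperedness} on $H$, and this is automatic: $K_\lambda(\cdot,x)^\star w$ is a $K$-finite matrix coefficient, hence tempered on $G$, and restriction of tempered functions to $H$ stays tempered (cf.~\cite{HHO}). Cusp-form behavior is in fact \emph{equivalent} to discrete decomposability (Theorem~\ref{prop:cuspcrit}), so invoking it here would be circular or at least unnecessarily strong. Second, to apply the character termwise to the isotypic decomposition $\pi_\lambda(z)u=\sum_\nu P_{\lambda,\nu}(\pi_\lambda(z)u)$ you need this series to converge in the space of tempered functions; the paper secures this via a Scholium showing that, under $H$-admissibility, $H$-smooth vectors in each isotypic block are $G$-smooth (using \cite[Theorem 11.8.2]{Wa2} and that $L$-finite equals $K$-finite), so Harish-Chandra's absolute convergence of Fourier series in the smooth topology applies. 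Once that is in place, the orthogonality relations \cite[Lemma~84]{HC2} pick out the $\mu$-term exactly as you outline, and continuity finishes the job.
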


  In order to avoid cumbersome notation, for this subsection, sometimes, we write  $xv:=\pi_\lambda (x)v$, $ x\in G, v \in \,H^2(G,\tau).$
A proof of  Proposition \ref{prop:klmchar} is given at the end of this section.  For the time being, we show Proposition \ref{prop:klmchar}  under the hypothesis:  $G$ is any Lie group and  $K,H$ are compact subgroups of $G$. Thus, $V_\mu^H$ is a finite dimensional vector space.  We fix $N$  a reproducing kernel   $G-$invariant subspace of $L^2(G \times_\tau W).$  Under these hypotheses, the orthogonal projector $P_N$ onto $N$ is represented by different  kernels. For any
    kernel $K_N$ that represents the orthogonal projector $P_N$,  we want to show $P_{N[V_\mu^H ]} $ is represented by $K_1(y,x):= \int_H  d_\mu \bar \chi_{ \pi_\mu^H} (h) K_N (h^{-1}y,x) dh$.
According to a classical result, the orthogonal projector onto $L^2(G\times_\tau W)[V_\mu^H]$ is the linear operator $"\pi (d_\mu \bar{\chi}_{ \pi_\mu^H} )".$ Therefore, for $f \in N$ and $x \in G$, we have
 \begin{align*}
      P_{N[V_\mu^H]} (f)(x) & =  "\pi (d_\mu \bar{\chi}_{ \pi_\mu^H} )"(P_N f)(x) \\
       & =\int_H d_\mu \bar{\chi}_{ \pi_\mu^H} (h) P_N(f)(h^{-1}x) dh \\
        & =\int_H d_\mu \bar{\chi}_{\pi_\mu^H} (h)\int_G K_N (y,h^{-1}x)  f(y) dy dh \\
        & = \int_G (\int_H  d_\mu \bar{\chi}_{ \pi_\mu^H} (h) K_N (y,h^{-1}x) dh) f(y) dy \\
        & = \int_G K_1 (y,x) f(y)dy .
     \end{align*}  Thus, $K_1 $ is a kernel that represents the orthogonal projector onto  $N[V_\mu^H].$ This concludes the verification of Proposition \ref{prop:klmchar} for  a compact subgroup  $H$.

\smallskip

Our proof of  Proposition \ref{prop:klmchar} is based on  an expression for the orthogonal projector onto a closed subspace  $E$ of $\,H^2(G,\tau)$. For this, we fix a representation $(\pi, V)$  equivalent  to $(\pi_\lambda, \,H^2(G,\tau))$ and we assume $W \subset V.$ Then, the map $ V \ni v \mapsto (G \ni x \mapsto f_v(x):=P_W( \pi(x^{-1}) v)$ is $G-$equivariant, continuous and bijective from $V$ onto $\,H^2(G,\tau)$. We also fix an orthonormal basis $\{v_i \}$ for $W$,  the  equality\\  \phantom{xxxxxxxxxxxxxx} $P_W ( \pi(y^{-1}) v)=\sum_{1 \leq j \leq dim W}( \pi(y^{-1}) v, v_j)_V v_j$\\ shows any element of $\,H^2(G,\tau)$ is a finite sum of matrix coefficients for $V$. \\
We notice, for $z,v \in V$,  $(f_v,f_z)_{L^2(G)} = \frac{\dim W}{d_\lambda} (v,z)_V $. Hence, an unitary equivalence $i$ from $W$ onto $\,H^2(G,\tau)[W]$ is given by $i(v)=\sqrt{\frac{d_\lambda}{\dim W}} f_v,$ we have  $e_1(i(v))= i(v)(e) = \sqrt{\frac{d_\lambda}{\dim W}} v.$ Hence $f_j := i(v_j)=\sqrt{\frac{d_\lambda}{\dim W}} f_{v_j}$ is an orthonormal basis for $\,H^2(G,\tau)[W].$ As usual, $\{v_j^\star \}$ denotes the dual basis to the basis $\{ v_j\}.$\\
For $f \in \,H^2(G,\tau)$  the integral below is absolutely convergent, because the product of two $L^2$ functions gives an integrable function. We define\\  \phantom{xx}  $C_E(f)(x) :=\frac{d_\lambda}{\dim W}\int_G \sum_{j,k} (P_E (yf_j), xf_k)_{L^2(G)} (v_k \otimes v_j^\star) (f(y))dy.  $

 The indexes  in the sum run from 1 to $\dim W$.   A straightforward computation, shows that, for $f \in L^2(G \times_\tau W)$,   the function $C_E(f)$  belongs to $\Gamma(G \times_\tau W).$   We want to show, \begin{lem}\label{lem:ce}  $H^2(G,\tau)$ is  as in Section~\ref{sec:prelim}, $E$ is a closed subspace of $H^2(G,\tau)$. Then,  $C_E$ is equal to the orthogonal projector from $\,H^2(G,\tau)$ onto $E.$ \end{lem}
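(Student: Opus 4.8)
The plan is to reduce the identity $C_E=P_E$ to the orthogonality relations (Schur relations) for the square integrable representation $\pi_\lambda$, by means of a single \emph{reproducing relation} that converts a value $f(x)\in W$ into an $L^2$-inner product against a translate of a lowest $K$-type vector. Concretely, first I would prove
\begin{equation*}
(f,\pi_\lambda(x)f_k)_{L^2}=\sqrt{\tfrac{\dim W}{d_\lambda}}\,(f(x),v_k)_W,\qquad f\in H^2(G,\tau),\ x\in G.
\end{equation*}
To see this, observe that point evaluation $h\mapsto h(e)$ is a $K$-map from $H^2(G,\tau)$ onto $W$ (it intertwines $\pi_\lambda|_K$ with $\tau$, since $(\pi_\lambda(k)h)(e)=h(k^{-1})=\tau(k)h(e)$); hence it annihilates every $K$-type except the lowest one, so $h(e)=(P_{H^2(G,\tau)[W]}h)(e)$. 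Since $\{f_j\}$ is an orthonormal basis of $H^2(G,\tau)[W]$ with $f_j(e)=\sqrt{d_\lambda/\dim W}\,v_j$, expanding $P_{H^2(G,\tau)[W]}\pi_\lambda(x^{-1})f$ in this basis and evaluating at $e$ yields the displayed formula (using $(\pi_\lambda(x^{-1})f)(e)=f(x)$).

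Next I would rewrite both $C_E(f)(x)$ and $P_E(f)(x)$ through this relation. Writing $(v_k\otimes v_j^\star)(f(y))=(f(y),v_j)_W\,v_k$ and applying the reproducing relation to $(f(y),v_j)_W$ and to the coordinates $P_E(f)(x)=\sqrt{d_\lambda/\dim W}\sum_k (P_Ef,\pi_\lambda(x)f_k)_{L^2}\,v_k$, comparison of the $v_k$-coefficients of $C_E(f)(x)$ and $P_E(f)(x)$ reduces the whole statement to the scalar identity
\begin{equation*}
\frac{d_\lambda}{\dim W}\sum_{j}\int_G (f,\pi_\lambda(y)f_j)_{L^2}\,(P_E(\pi_\lambda(y)f_j),\pi_\lambda(x)f_k)_{L^2}\,dy=(P_Ef,\pi_\lambda(x)f_k)_{L^2},
\end{equation*}
for every $k$ and every $x$. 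Since $P_E$ is self-adjoint, $(P_E(\pi_\lambda(y)f_j),\pi_\lambda(x)f_k)_{L^2}=(\pi_\lambda(y)f_j,P_E(\pi_\lambda(x)f_k))_{L^2}$, so this is exactly the \emph{resolution of identity}
\begin{equation*}
\frac{d_\lambda}{\dim W}\sum_{j}\int_G (h,\pi_\lambda(y)f_j)_{L^2}\,(\pi_\lambda(y)f_j,h')_{L^2}\,dy=(h,h')_{L^2},\qquad h,h'\in H^2(G,\tau),
\end{equation*}
evaluated at $h=f$ and $h'=P_E(\pi_\lambda(x)f_k)$.

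Finally I would establish this resolution of identity, which is the substantive step. Each summand equals $\int_G \overline{(\pi_\lambda(y)f_j,h)_{L^2}}\,(\pi_\lambda(y)f_j,h')_{L^2}\,dy$, the integral of a product of two square integrable matrix coefficients of $\pi_\lambda$; by the orthogonality relations for the square integrable representation $\pi_\lambda$, with $d_\lambda$ its formal degree, this equals $\tfrac1{d_\lambda}(f_j,f_j)_{L^2}(h,h')_{L^2}=\tfrac1{d_\lambda}(h,h')_{L^2}$. Summing over the $\dim W$ basis vectors $f_j$ of $H^2(G,\tau)[W]$ and multiplying by $d_\lambda/\dim W$ gives $(h,h')_{L^2}$, proving the resolution of identity and hence $C_E(f)(x)=P_E(f)(x)$ for all $x$, i.e. $C_E=P_E$. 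The main point requiring care is the legitimacy of the interchanges of the $dy$-integral, the finite sum over $j$, and the pairing with $h'$: the integrals converge absolutely because products of $L^2$ matrix coefficients are integrable (as already noted for the defining integral of $C_E$), so Fubini applies; one should also check that the formal-degree normalization used here is precisely the one already implicit in the relation $(f_v,f_z)_{L^2}=\tfrac{\dim W}{d_\lambda}(v,z)_V$, which is itself the Schur relation applied to the $f_v$.
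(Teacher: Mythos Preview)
Your proof is correct and rests on the same tool as the paper's, namely the Schur orthogonality relations for the square integrable representation $\pi_\lambda$. The only difference is organizational: the paper verifies $C_E f_v = P_E f_v$ on the spanning family $\{f_v\}$, splitting into the cases $f_v\in E$ and $f_v\in E^\perp$, whereas you prove $C_E f = P_E f$ for arbitrary $f$ in one stroke by first extracting the reproducing relation $(f,\pi_\lambda(x)f_k)_{L^2}=\sqrt{\dim W/d_\lambda}\,(f(x),v_k)_W$ (valid because the lowest $K$-type $W$ is irreducible and occurs with multiplicity one, so evaluation at $e$ kills all other $K$-types) and then reducing to the resolution of identity $\tfrac{d_\lambda}{\dim W}\sum_j\int_G (h,\pi_\lambda(y)f_j)(\pi_\lambda(y)f_j,h')\,dy=(h,h')$, which is exactly Schur orthogonality. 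Your route is a bit more direct and avoids the case split; the paper's route has the minor advantage of making the role of $P_E$ explicit at each step. Both are the same argument at heart.
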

   \begin{proof}
For $f_v  \in E \, (\text{resp.} \,\, f_v \in E^\perp),$ we show that $C_E(f_v)= f_v    (\text{resp.} \\ C_E(f_v)=0 ). $ In fact, $f_v(y)=\sum_{1 \leq r \leq \dim W} (\pi (y^{-1})v, v_r)_{V} v_r .$ Hence,
\begin{align*} \frac{\dim W}{d_\lambda} C_E(f_v)(x)& =\sum_{k,j,r}\int_G (P_E(yf_j),xf_k)_{L^2(G)} (\pi (y^{-1})v, v_r)_{V} v_j^\star(v_r) \,v_k \,dy
\\  &= \int_G \sum_{j,k}\ (yf_j, P_E(xf_k))_{L^2(G)} \overline{ (\pi (y)v_j, v)}_{V} dy\,\, v_k \\ &= \frac{d_\lambda}{dim W} \int_G \sum_{j,k}\ (yf_j, P_E(xf_k))_{L^2(G)} \overline{ (yf_{v_j}, f_v)}_{L^2(G)} \, dy \,\, v_k \\  & =
\frac{1}{ dim W  } \sum_{j,k} (f_{j},f_{v_j})_{L^2(G)} \overline{(P_E(xf_k), f_v)}_{L^2(G)} v_k \\
& =\frac{1}{dim W} (\sum_j (f_{j},f_{v_j})_{L^2(G)})( \sum_k (  P_E(f_v), xf_k)_{L^2(G)} v_k)  .
\end{align*}
Whence,  for $ f_v\in E^\perp $, we have $C_E(f_v)=0$, whereas for $f_v \in E$,  since $P_E(f_v)=f_v$, we obtain  \begin{multline*}\sum_k (P_E(f_v), x f_k)_{L^2(G)} v_k    = \sqrt{\frac{d_\lambda}{\dim W} }\sum_k (x^{-1}f_v, f_{v_k})_{L^2(G)} v_k  \\ =\frac{\sqrt{\dim W}}{\sqrt{d_\lambda}} \sum_k(\pi (x^{-1})v,v_k)_V v_k = \frac{\sqrt{\dim W}}{\sqrt{d_\lambda}} f_v(x) .\end{multline*}
   And, $ \sum_j (f_{j},f_{v_j})_{L^2(G)} =\sum_j \frac{\sqrt{d_\lambda}}{\sqrt{\dim W}}   (f_{v_j},f_{v_j})_{L^2(G)} = \frac{\sqrt{\dim W}}{\sqrt{d_\lambda}} \dim W. $  Thus, $\frac{\dim W}{d_\lambda} C_E(f_v)= \frac{\dim W}{d_\lambda}  f_v,$ and we have shown  Lemma~\ref{lem:ce}.
\end{proof}
{\it Note.} A consequence of  Lemma~\ref{lem:ce} is that $C_E$ is a continuous linear operator  in $\,H^2(G,\tau)$. Further, it readily follows that $C_E$ is a continuous linear operator on $L^2(G\times_\tau W)_{disc}$.
\begin{proof}[Proof of Proposition~\ref{prop:klmchar}]
After we recall the equality \\ \phantom{xxxxxxx} $K_\lambda (y,x) = e_1 \circ P_{\,H^2(G,\tau)[W]} \pi_\lambda (x^{-1}y)P_{\,H^2(G,\tau)[W]} \circ i$\\ and Lemma~\ref{lem:ce},  we conclude:
to show that the matrix kernel for the orthogonal projector   onto  $\,H^2(G,\tau)[V_\mu^H ]$ is equal to  the function $(y,x)\mapsto d_\mu \Theta_{V_\mu^H}(h \mapsto K_\lambda (h^{-1}y,x)),$  is equivalent to show the equality
    \begin{multline*}  \sum_{i,j} d_\mu \Theta_{(\pi_\mu^H)^\star} (h \mapsto (yf_j, xf_i)_{L^2(G)}) \, v_i\otimes v_j^\star   \\  = \sum_{i,j} (P_{\lambda, \mu}(yf_j), xf_i)_{L^2(G)}\, v_i \otimes v_j^\star.    \end{multline*}
The right hand side of the above equality obviously is  a well defined function, we now show the left hand side defines a  function. For this, we show that for fixed $y,x$ the function $H \ni h\mapsto K_\lambda(hy,x)$ is tempered in the sense of Harish-Chandra. Indeed, the $f_j's$ are $K-$finite vectors, hence    $\pi_\lambda (x)f_j, \pi_\lambda (y)f_j$ are smooth vectors for $\,H^2(G,\tau) $. Thus, they are tempered functions in the sense of Harish-Chandra. Since, the inner product $(\pi_\lambda (h) \pi_\lambda (x) f_i, \pi_\lambda (y) f_j)_{L^2(G)}$ can be rewritten as a convolution of tempered functions, we obtain that when we let $h$ varies in $G,$ the matrix coefficient $(\pi_\lambda (\cdot) \pi_\lambda (x) f_i, \pi_\lambda (y) f_j)_{L^2(G)}$  is a tempered function  on $G$. In     \cite[Proposition 2.2]{HHO}, it is shown that the restriction to $H$ of $(\pi_\lambda (\cdot) \pi_\lambda (x) f_i, \pi_\lambda (y) f_j)_{L^2(G)}$ is a tempered function. Since the character of a discrete series representation is a tempered distribution, we obtain  the left hand side  defines a function of $x,y.$ To follow, we fix $x, y \, \text{in} \, G$,  smooth vectors $v,w \, \text{in}\, \,H^2(G,\tau).$  We  verify
\begin{multline*}
\Theta_{(\pi_\mu^H)^\star} (h \mapsto (\pi_\lambda (hy)v, \pi_\lambda (x) w)_{L^2(G \times_\tau W)} ) \\  =(P_{\lambda, \mu} (\pi_\lambda (y)v), P_{\lambda, \mu} (\pi_\lambda (x)w))_{L^2(G, \tau)}  \\  =(P_{\lambda, \mu} (\pi_\lambda (y)v), \pi_\lambda (x)w)_{L^2(G, \tau)}. \end{multline*}
{\it Scholium:} We have that a $H-$smooth vector in $V_\lambda[V_\nu^H]$ is $G-$smooth. In fact, by hypothesis, $ V_\lambda[V_\nu^H]$ is a finite sum of irreducible unitary  representations for $H.$ Thus, \cite[Theorem 11.8.2]{Wa2}, the subspace  $(V_\lambda[V_\nu^H])^{H-smooth}$ is finitely generated module over the algebra   of rapidly decreasing functions $\mathscr S(H)$ on $H.$ Hence, each element of $(V_\lambda[V_\nu^H])^{H-smooth}$ is a finite sum of functions $(\pi_\lambda)_{\vert_H} (g) (f_1)$,  where $g$ is a rapidly decreasing function on $H$ and $f_1$ is an $L-$finite element in $V_\lambda[V_\nu^H])$. Now, from \cite{DV} \cite[Proposition 1.6]{K3} it follows that the hypothesis of being $H-$admissible yields that $L-$finite elements are $K-$finite. Thus, any vector in $(V_\lambda[V_\nu^H])^{H-smooth}$ is a finite sum of functions of the type  $(\pi_\lambda)_{\vert_H} (g) (f_1)$  with $g$ rapidly decreasing on $H$ and $f_1$ a smooth vector  for $G$.  Further, $(V_\lambda)^\infty$ endowed with the smooth topology is a Frechet representation for $G$. Whence, $(V_\lambda)^\infty$ is a Frechet representation for $H$. Therefore, since $f_1$ is a $G-$smooth vector we conclude,  $(\pi_\lambda)_{\vert_H} (g) (f_1)$ is an element of $(V_\lambda)^\infty.$ Thus, every $H-$smooth vector in $V_\lambda[V_\nu^H]$ is $G-$smooth.

\smallskip

Our hypothesis is that $res_H(\pi_\lambda)$ is an $H$-admissible representation. Thus, there exists a subset $Spec(res_H(\pi_\lambda))$ of the set of Harish-Chandra parameters for $H$ so that we have the Hilbert sum $V_\lambda^G =\oplus_{\nu \in Spec(res_H(\pi_\lambda))} V_\lambda[V_\nu^H]$ and $V_\lambda[V_\nu^H] \not= \{0\}$ if and only if $\nu \in Spec(res_H(\pi_\lambda))$. \\ Next, we write for  $z \in G, u \in (V_\lambda^G)^\infty $  $$\pi_\lambda (z) u= \sum_{\nu \in Spec(res_H(\pi_\lambda))} P_{\lambda, \nu}(\pi_\lambda(z)u) \eqno{(a)}.$$
We claim the convergence of the   series above is absolutely in both, $L^2-$to\-pol\-o\-gy and  the to\-pol\-o\-gy  for $(V_\lambda)^\infty$. Moreover, the series converges  in the topology of uniform convergence on  compact sets for functions as well for any derivatives.

Our hypothesis shows that $L^2-$convergence is obvious. Since $\pi_\lambda(z)u$ is a smooth vector, we have that for every $\nu$, the vector $P_{\lambda, \nu}(\pi_\lambda(z)u)$ is $H-$smooth, the previous claim shows $P_{\lambda, \nu}(\pi_\lambda(z)u)$ is $ G-$smooth. We recall a result of Harish-Chandra  \cite[Lemma 5]{HC2} which asserts: the Fourier series of a smooth vector converges absolutely in smooth topology. Therefore, the Fourier series of $\pi_\lambda (z)$ as well as the Fourier series for $P_{\lambda, \nu}(\pi_\lambda(z)u)$ converges absolutely in smooth topology. Since, $H-$admissible implies $L-$admissible, and that the subspace of $L-$finite vectors is equal to the subspace of $K-$finite vectors,  we have that the series  $P_{\lambda, \nu}(\pi_\lambda(z)u)$ is a rearrangement of a subseries of  the Fourier series for $\pi_\lambda(z)u$. Thus, the series $\sum_\nu P_{\lambda, \nu}(\pi_\lambda(z)u)$  converges absolutely in smooth topology. The third affirmation follows from \cite{At}.  \\ We recall, in \cite{HC2},  it is shown the smooth vectors in $\,H^2(G,\tau)$ are tempered functions and convergence in smooth topology implies convergence in the space of tempered functions.    Therefore, the series of functions
$$
h \mapsto (hy v, xw)_{L^2(G)}   = \sum_{\nu \in \,\, Spec(res_H(\pi_\lambda))} (h \mapsto(P_{\lambda, \nu}(hyv), xw)_{L^2(G)}) \eqno{(b)}.  $$
converges in the topology for the space of tempered functions. The equality follows from the  series (a) applied to $z=hyv$ and the continuity of the inner product on each variable.    Applying the equality $(P_{\lambda, \nu}(hyv), xw)_{L^2(G)}=(P_{\lambda, \nu}(hyv), P_{\lambda,\nu}xw)_{L^2(G)}$ in  (b), and, applying to the resulting series the character of $(\pi_\mu^H)^\star$,   we obtain
\begin{multline*} \lefteqn{\Theta_{ (\pi_\mu^H)^\star}( h \mapsto (hy v, xw))
        } \\  =\sum_{\nu \in Spec(res_H(\pi_\lambda))}  \Theta_{(\pi_\mu^H)^\star}(h \mapsto ( hP_{\lambda, \nu} (yv),P_{\lambda, \nu}(xw))_{L^2(G)}). \end{multline*}
Next, the function $h \mapsto ( hP_{\lambda, \nu} (yv),P_{\lambda, \nu}(xw))_{L^2(G)}$ is a matrix coefficient for $V_\lambda[V_\nu^H],$  besides, $P_{\lambda, \nu} (yv),P_{\lambda, \nu}(xw) $ are smooth vectors for $H$,    hence, the orthogonality relations as written in \cite[Lemma 84]{HC2}  gives us
\begin{multline*} \Theta_{(\pi_\mu^H)^\star}(h \mapsto ( hP_{\lambda, \nu} (yv),P_{\lambda, \nu}(xw))_{L^2(G)})\\ =\left\{ \begin{array}{ll} 0 & \mbox{ $\nu \not= \mu$} \\ \frac{1}{d_\mu} (P_{\lambda, \mu} (yv), P_{\lambda, \mu} (xw))_{L^2(G)} & \mbox{ $\nu=\mu$ .} \end{array} \right. \end{multline*}

Thus,

\begin{equation*}
\begin{split}
  \Theta_{ (\pi_\mu^H)^\star}( h \mapsto (hy v, xw))
           &=\frac{1}{d_\mu} (P_{\lambda, \mu} (yv), P_{\lambda, \mu} (xw))_{L^2(G)} \\
          &=\frac{1}{d_\mu} (P_{\lambda, \mu} (yv), xw)_{L^2(G)}.
  \end{split}
 \end{equation*}
After we apply the above equality to $v=f_j, w=f_i$ and add up,  we  conclude a   proof of Proposition~\ref{prop:klmchar}.
\end{proof}

\section{Criteria for discretely decomposable restriction}
As in previous sections,  we keep the hypothesis and notation of Section 2. The objects are $G, K, (\tau, W),  \,H^2(G,\tau), H, L.$    We recall the orthogonal projector
$P_\lambda $ onto $ \,H^2(G,\tau)$,  is given by a smooth matrix kernel $ K_\lambda(y,x)=K_\lambda(x^{-1}y,e)=\Phi_0(x^{-1}y)$ (cf. Appendix~\ref{subsec:A6}).
Here, $\Phi_0 $ is the spherical function associated to the lowest $K-$type $(\tau,W)$ of $\pi_\lambda^G$. Harish-Chandra showed that $\Phi_0$, and hence, $tr (\Phi_0) $ are tempered functions for the definition of Harish-Chandra, \cite{HC2}\cite[8.5.1]{Wa1}.  In \cite{HHO}, we find a proof that the tempered functions on $G$ restricted to $H$ are tempered functions. A tempered function is called a {\it cusp form} if the integral along the unipotent radical of any proper parabolic subgroup  of any left translate of the function is equal to zero \cite[7.2.2]{Wa1}. Let $r_n : H^2(G,\tau )  \rightarrow L^2(H\times_{\tau_n }  (\p/\p')^{(n)} \otimes W)$ be as in Example~\ref{examp:rn}. The notation $r_n(\Phi_0^\star )$ means the family of functions $r_n(K_\lambda(\cdot ,e)^\star w)=r_n( \Phi_0^\star(\cdot)w), w \in W.$ The previous considerations yield the family $r_n(\Phi_0^\star)$ consists of tempered functions.   The purpose of this section is to show:
\begin{thm}  \label{prop:cuspcrit}   Let $\pi_\lambda$ be a discrete series for $G$, let $\Phi_0$ be its lowest $K-$type spherical function.  Then,      $r_n(\Phi_0^\star) $  is a cusp form on $H$,   for every $n=0,1,\dots$, if and only if $\pi_\lambda^G$ restricted to $H$ is discretely decomposable. In turn, this is equivalent to: for each $y \in G$, the restriction   of $K_\lambda (\cdot ,y )$ to $H$ is a cusp form.
\end{thm}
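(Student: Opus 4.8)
The plan is to replace every instance of ``cusp form'' by ``lies in the discrete part'' and then to read the three conditions off a single closed subspace. First I would invoke Harish-Chandra's theory of cusp forms, in the bundle form for $H\times_\sigma Z$ (\cite[7.2.2]{Wa1}, together with the temperedness of restrictions to $H$ from \cite{HHO}): a tempered section is a cusp form precisely when it lies in the discrete part $L^2(H\times_\sigma Z)_{disc}$. Using $K_\lambda(y',y)=K_\lambda(y^{-1}y',e)$ one has the identity $K_\lambda(\cdot,y)^\star w=\pi_\lambda(y)\,\Phi_0^\star w$, and the reproducing vectors $\{\pi_\lambda(y)\Phi_0^\star w : y\in G,\ w\in W\}$ span a dense subspace of $V_\lambda$. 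With $r_0=r$ the restriction map of Example~\ref{examp:rn}, set $\mathcal V:=\{v\in V_\lambda : r_0(v)\in L^2(H\times_{res_L(\tau)}W)_{disc}\}$. Then (1) reads ``$r_n(\Phi_0^\star w)\in(\ldots)_{disc}$ for all $n,w$'', while (3) reads ``$\pi_\lambda(y)\Phi_0^\star w\in\mathcal V$ for all $y,w$'', i.e.\ $\mathrm{Im}(r_0)\subset(\ldots)_{disc}$ by density and continuity.

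For the easy implications I would use the Example following Proposition~\ref{prop:adviadiffop}: $res_H(\pi_\lambda)$ is discretely decomposable iff $\mathrm{Im}(r_n)\subset(\ldots)_{disc}$ for some $n$. If $res_H(\pi_\lambda)$ is discretely decomposable, then $V_\lambda|_H$ is a Hilbert sum of square-integrable $H$-representations, so the continuous $H$-map $r_n$ carries each irreducible constituent to $0$ or to a square-integrable subrepresentation (Schur), whence $\overline{\mathrm{Im}(r_n)}\subset(\ldots)_{disc}$ for every $n$; evaluating at the vectors $\Phi_0^\star w$ gives (1), and the case $n=0$ gives (3). Conversely (3) says exactly $\mathrm{Im}(r_0)\subset(\ldots)_{disc}$, and the quoted Example with $n=0$ returns discrete decomposability. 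Thus (2)$\Leftrightarrow$(3) and (2)$\Rightarrow$(1), and it remains to prove (1)$\Rightarrow$(2).

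For (1)$\Rightarrow$(2) I would show $\mathcal V=V_\lambda$. The subspace $\mathcal V$ is closed and $H$-invariant, and since $\pi_\lambda$ is irreducible it suffices to have $\pi_\lambda(\exp X)\Phi_0^\star w\in\mathcal V$ for $X$ in a neighbourhood of $0$ in $\g$ and all $w$, because the translates of a cyclic vector over a neighbourhood of $e$ span a dense subspace. Writing $\g=\h\oplus\mathfrak s\oplus(\p/\p')$ with $\mathfrak s:=\k\cap\h^\perp$, I would Taylor-expand the function $h\mapsto\Phi_0^\star w(\exp(-tX)h)$ in $t$; its $k$-th coefficient equals $\tfrac{(-1)^k}{k!}R_{(\mathrm{Ad}(h^{-1})X)^k}\Phi_0^\star w(h)$. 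Decomposing $\mathrm{Ad}(h^{-1})X$ into its $\h$-, $\mathfrak s$-, and $(\p/\p')$-parts, each resulting monomial restricted to $H$ is a cusp form: the $(\p/\p')$-derivatives are exactly the components of $r_n(\Phi_0^\star w)$, cuspidal by (1); the $\mathfrak s\subset\k$-derivatives act through $d\tau$ and merely reshuffle $w$, staying in the cuspidal family; and the $\h$-derivatives are intrinsic right-$H$-derivatives, under which cusp forms on $H$ are stable. Granting convergence in a suitable topology, the restriction of $\pi_\lambda(\exp X)\Phi_0^\star w$ to $H$ is a limit of cusp forms, so it lies in $(\ldots)_{disc}$ and $\pi_\lambda(\exp X)\Phi_0^\star w\in\mathcal V$; density then forces $\mathcal V=V_\lambda$, which is (3), hence (2).

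The hard part will be the analytic justification of this last propagation: controlling the $\mathrm{Ad}(h^{-1})$-twist, which turns one normal $G$-direction into a variable-coefficient combination of the operators $r_n$ with the three types of factors interleaved in an order that does not factor cleanly, and then establishing convergence of the normal Taylor series in a topology finer than $L^2$ in which the cuspidal subspace $(\ldots)_{disc}$ is closed. I would handle this with the Harish-Chandra tempered estimates for $\Phi_0$ and all its derivatives, together with the boundedness recorded in the Remark of Section~\ref{sec:prelim}, played against the rapid decay of cusp forms, so that the products with the (at most exponentially growing, smooth) $\mathrm{Ad}$-coefficients remain tempered and the series converges in the Schwartz topology on $H$. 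Once convergence is in such a topology, differentiation of the curve $t\mapsto\pi_\lambda(\exp tX)\Phi_0^\star w$, restriction to $H$, and membership in the closed subspace $(\ldots)_{disc}$ all commute, closing the argument.
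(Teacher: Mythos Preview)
Your treatment of the easy implications is essentially the paper's: Schur plus continuity gives $\mathrm{Im}(r_n)\subset(\ldots)_{disc}$ when $res_H(\pi_\lambda)$ is discretely decomposable, and your reading of (3) as $\mathrm{Im}(r_0)\subset(\ldots)_{disc}$ via density is exactly how the paper dismisses the second equivalence as ``a simple computation.''

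The real gap is in (1)$\Rightarrow$(2). Your propagation argument tries to show $r_0(\pi_\lambda(\exp X)\Phi_0^\star w)$ is cuspidal by Taylor-expanding and claiming each coefficient is a cusp form. But the coefficient $h\mapsto R_{(\mathrm{Ad}(h^{-1})X)^k}\Phi_0^\star w(h)$ is \emph{not} a sum of cusp forms: decomposing $\mathrm{Ad}(h^{-1})X$ produces $h$-dependent coefficient functions multiplying the components of $r_n(\Phi_0^\star w)$, and multiplying a cusp form by a smooth function of $h$ destroys both cuspidality and membership in $(\ldots)_{disc}$. You acknowledge that the three types of factors are ``interleaved in an order that does not factor cleanly,'' but the proposed fix via tempered estimates and Schwartz convergence addresses a growth problem, not this structural one. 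Even with perfect convergence, a sum of (function)$\times$(cusp form) terms has no reason to land in the discrete part.

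The paper avoids this entirely by never trying to reach $G$-translates. Instead it uses the whole family $\{r_n\}$ at once: from (1), each $r_n(k_w)\in(\ldots)_{disc}$; by $H$-equivariance and continuity of $r_n$ and closedness of the discrete part, $r_n(\overline{\pi_\lambda(H)k_w})\subset(\ldots)_{disc}$ for every $n$. Thus $\oplus_n r_n$ maps the \emph{$H$-closure} of $k_w$ continuously and $H$-equivariantly into a discrete Hilbert sum. The crucial extra ingredient is that $\oplus_n r_n$ is \emph{injective} (the $r_n$ are the normal Taylor coefficients of a real analytic function), so $\overline{\pi_\lambda(H)k_w}$ itself is a discrete sum of $H$-irreducibles. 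From this, $k_w$ is $\mathfrak z(U(\mathfrak h))$-finite (as in Proposition~\ref{prop:ddzhfinite}), and that proposition then gives discrete decomposability. The point is that condition (1) hands you cuspidality of \emph{all} normal derivatives precisely so that injectivity of $\oplus_n r_n$ can be invoked on the $H$-orbit closure; you never need to leave the $H$-orbit.
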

\begin{rmk}  T.  Kobayashi \cite[Theorem 2.8]{KO2} has shown that for a symmetric pair $(G,H)$, $\pi_\lambda $ restricted to $H$ is algebraically discretely decomposable if and only if $res_H(\pi_\lambda )$ is $H-$admissible. Whence, coupling the previously quoted result of Kobayashi, with Theorem~\ref{prop:cuspcrit} and Proposition~\ref{prop:ddzhfinite} we may state: for a symmetric pair $(G,H),$ the restriction  of $\pi_\lambda$ to $H$  is admissible if and only if for every $n=0,1,\dots $ $r_n(\Phi_0^\star)$ is a cusp form   if and only if \\ $\Phi_0^\star$ is left $\z(U(\h))-$finite.\\ For a symmetric pair $(G,H),$ another criterion for $H-$admissibility has been obtained by \cite{HHO}. For this, they write $H=K_0 \times H_1$, with $K_0$ a compact subgroup and $H_1$ a noncompact semisimple subgroup. Let $H^{\sigma  \theta} $ be the dual subgroup. Then, $H^{\sigma  \theta}=K_0 \times H_2$. Let  $M_i$ denote the centralizer in $L\cap H_1 =L\cap H_2$  of respective Cartan subspaces.   Harris-He-Olafsson show that if $M_1 M_2 =L\cap H_1$, then,  the representation $\pi_\lambda$ restricted to $H$ is admissible, and    $\dim Hom_H (\pi_\mu^H, res_H(\pi_\lambda) )$ is computed via a formula that involves $r_n$, the Harish-Chandra character for $\pi_\mu^H$,   the lowest $L-$type for $\pi_\mu^H$ and the limit of a sequence. \\
	In \cite{DV}, for any pair $(G,H)$ and $\pi_\lambda$ that satisfies Condition C,   it is shown: $\pi_\lambda$ is a $H-$admissible representation;  a "Blattner-Kostant" type formula for  $\dim Hom_H (\pi_\mu^H, res_H(\pi_\lambda) )$. For a symmetric pair $(G,H)$,   Condition C is equivalent to $H-$admissibility.
\end{rmk}
In order to show Theorem~\ref{prop:cuspcrit} we first show
\begin{prop} \label{prop:ddzhfinite} We let  $G, H, \pi_\lambda, K_\lambda, \Phi_0 $ be as in Theorem~\ref{prop:cuspcrit}. Then, $\pi_\lambda$  restricted to $H$ is a discretely decomposable representation for $H$ if and only if the   function $y \mapsto K_\lambda(y,e)^\star =\check{\Phi}_0 (y)$    is left $\mathfrak z (U (\mathfrak h))-$finite.
\end{prop}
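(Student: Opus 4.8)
The plan is to prove the equivalence by the same mechanism used for Proposition~\ref{prop:adviadiffop}, built around the observation that for each nonzero $w \in W$ the vector $v_w := K_\lambda(\cdot,e)^\star w = \check\Phi_0(\cdot)w$ is a \emph{nonzero} element of the lowest $K$-type $V_\lambda[W] \subset (V_\lambda)_{K-fin}$ (as already recorded in the proof of Fact~\ref{prop:projontoktype}), and that the left action $L_D$, $D \in \z(U(\h))$, is exactly the differentiated action of $\pi_\lambda|_H$. With this dictionary, saying that $\check\Phi_0$ is left $\z(U(\h))$-finite is precisely saying that each $v_w$ is a $\z(U(\h))$-finite vector.

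For the implication ``$\z(U(\h))$-finite $\Rightarrow$ discretely decomposable'', I would first note that $U(\h)v_w \subset (V_\lambda)_{K-fin}$, since $v_w$ is $K$-finite and $U(\h)\subset U(\g)$ preserves the $K$-finite part; thus $U(\h)v_w$ is an $(\h,L)$-module all of whose vectors are $L$-finite. Because $v_w$ is $\z(U(\h))$-finite and $\z(U(\h))$ is central, the finite-dimensional space $\z(U(\h))v_w$ is $\z(U(\h))$-stable, so a finite-codimension ideal of $\z(U(\h))$ annihilates $v_w$ and hence the singly generated module $U(\h)v_w$. Then $U(\h)v_w$ is a $U(\h)$-finitely generated, $\z(U(\h))$-finite $(\h,L)$-module, so by Harish-Chandra's theorem \cite[Corollary 3.4.7 and Theorem 4.2.1]{Wa1} it has a finite composition series and contains an irreducible $(\h,L)$-submodule. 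By \cite[Lemma 1.5]{K3} the module $(V_\lambda)_{K-fin}$ is $\h$-algebraically decomposable, and since $V_\lambda$ is unitary \cite[Theorem 4.2.6]{K4} upgrades this to discrete decomposability of $\pi_\lambda|_H$. This direction is routine once $v_w$ is pinned down.

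For the converse I would invoke Kobayashi's characterization that, for unitary $\pi_\lambda$, discrete decomposability of $\pi_\lambda|_H$ is equivalent to algebraic discrete decomposability of $(V_\lambda)_{K-fin}$ as an $(\h,L)$-module \cite{K3}\cite{K4}, i.e. to the existence of an exhausting increasing filtration of $(V_\lambda)_{K-fin}$ by finite-length $(\h,L)$-submodules. Granting this, $v_w$ lies in one such finite-length piece $X_j$; each irreducible $(\h,L)$-constituent of $X_j$ carries a $\z(U(\h))$-infinitesimal character (Dixmier's lemma, the module being countable-dimensional), so running the composition series shows that a finite product of maximal ideals of $\z(U(\h))$ annihilates $X_j$. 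Hence $\z(U(\h))$ acts on $X_j$ through a finite-dimensional quotient, $v_w$ is $\z(U(\h))$-finite, and, $w$ being arbitrary, $\check\Phi_0$ is left $\z(U(\h))$-finite.

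The main obstacle is this converse, and precisely the point that discrete decomposability is \emph{not} assumed to have finite multiplicities: a priori $v_w$ could have nonzero projection onto infinitely many irreducible $H$-constituents, so its $\z(U(\h))$-finiteness is not visible from the Hilbert-space decomposition alone. The resolution is exactly that Kobayashi's notion of discrete decomposability supplies the finite-length filtration, which confines $v_w$ to a piece involving only finitely many infinitesimal characters; care is needed to cite the correct equivalence (unitary versus algebraic discrete decomposability) and to justify that finite length forces annihilation by a finite-codimension ideal of $\z(U(\h))$. A more self-contained alternative would decompose $v_w = \sum_\mu P_{\lambda,\mu}v_w$ along the $H$-isotypic projections and use the smooth-topology convergence established in Section~\ref{sec:kerh2hw} to apply $\z(U(\h))$ term by term, so that $\z(U(\h))v_w$ lies in the closed span of the $P_{\lambda,\mu}v_w$ with eigenvalues $\chi_\mu(\cdot)$; but proving that only finitely many distinct $\chi_\mu$ occur again reduces to Kobayashi's algebraic criterion.
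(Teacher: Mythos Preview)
Your argument for the implication ``$\z(U(\h))$-finite $\Rightarrow$ discretely decomposable'' is essentially identical to the paper's: both pass through Harish-Chandra's finite-length theorem \cite[Corollary 3.4.7, Theorem 4.2.1]{Wa1}, then \cite[Lemma 1.5]{K3} and \cite[Theorem 4.2.6]{K4}.

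For the converse you take a genuinely different route. You invoke Kobayashi's equivalence between Hilbert and algebraic discrete decomposability for irreducible unitary representations, and then read off $\z(U(\h))$-finiteness of $v_w$ from the finite-length filtration. The paper instead stays within classical Harish-Chandra theory and avoids that equivalence entirely: it first decomposes $k_w \in V_\lambda[W]$ as a sum $f_1+\dots+f_s$ of vectors generating irreducible $L$-modules; then, using that every irreducible $H$-subrepresentation of $V_\lambda$ is a discrete series of $H$ (recorded in Section~\ref{sec:prelim}) together with Harish-Chandra's result \cite[Lemma 70]{HC2} that a fixed irreducible $L$-module is an $L$-type of only finitely many discrete series of $H$, it concludes that each $f_j$ has nonzero projection $P_i(f_j)$ onto only finitely many $H$-isotypic components. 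Since $f_j$ is $K$-finite and hence $G$-smooth, each $P_i(f_j)$ is $H$-smooth in a single isotypic and therefore a $\z(U(\h))$-eigenvector, so $f_j$ and then $k_w$ are $\z(U(\h))$-finite. This is more elementary and specifically exploits the discrete-series context. Your ``more self-contained alternative'' at the end is in fact exactly the paper's strategy, but the finiteness of the contributing infinitesimal characters comes from Harish-Chandra's Lemma~70 rather than from Kobayashi's criterion --- so that alternative does \emph{not} reduce to Kobayashi, and you missed this shortcut.
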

\begin{proof} For the direct implication, we proceed as follows.  Our hypothesis is $\pi_\lambda$ is discretely decomposable; this  allows us to write $V_\lambda$ as the Hilbert sum of the $H-$isotypic components; hence,   there exists a family $(P_i)_{ i \in \Z_{\geq 0}}$ of orthogonal projectors on $V_\lambda$ which are $H-$equivariant so that we have the orthogonal direct sum decomposition $V_\lambda =\oplus_i P_i(V_\lambda),$ and for every $i$,  $P_i(V_\lambda)$ is equal to the isotypic component of an irreducible $H-$module.  Next, we fix $w \in W$, we recall   the function
	$ y \mapsto K_\lambda(y,e)^\star (w)=:k_w(y)$  is a $K-$fi\-ni\-te  element of $\,H^2(G,\tau)$  and $k_w$ belongs to $\,H^2(G,\tau)[W]\equiv W .$  After we decompose $\,H^2(G,\tau)[W]$ as  a sum of irreducible $L-$submodules, we write $k_w=f_1 +\dots +f_s$,  where $f_j$ is so that the linear subspace spanned by  $\pi_\lambda(L)f_j$ is an irreducible $L-$submodule of $\,H^2(G,\tau)[W].$ To continue, we set $f_1:=f_j.$ A result of Harish-Chandra, \cite[Lemma 70]{HC2}, states  that an irreducible representation of $L$ is the $L-$type of at most finitely many discrete series representations for $H.$ Thus,  the representation of  $L$ in the  subspace spanned by $\pi(L)f_1$ is an $L-$type of at most finitely many discrete series representations for $H.$ Therefore, $P_i(f_1)=0 $ for all but finitely many indices $i$. Let us say $P_i(f_1)\not=0$ for $i=1, \dots, N.$ Since $f_1$ is a $K-$finite vector in $V_\lambda$, we have that $f_1$ is a smooth vector for $\pi_\lambda,$ therefore, $P_i(f_1)$ is a $H-$smooth vectors in $P_i(V_\lambda).$  Owing to $P_i(V_\lambda)$ is an isotypic representation, we have that $\z(U(\h))$ applied to $P_i(f_1)$ is contained in the one dimensional vector subspace spanned by $P_i(f_1)$. Hence, $f_1$ is a finite sum of $\z(U(\h)-$finite vectors. Thus, $k_w$ is a  $\z(U(\h))-$finite vector.  $W$ is a finite dimensional vector space,  let us conclude that the map $  K_\lambda( \cdot,e)^\star=\Phi_0 (\cdot)^\star$ is left $\z(U(\h))-$finite.

	\smallskip
	
	For the converse statement, owing to our hypothesis, for each $w \in W $ we have that  $k_w$  is  $\mathfrak z (U (\mathfrak h))-$finite element of $\,H^2(G,\tau)_{K-fin}.$  A result of Harish-Chandra,  \cite[Corollary  3.4.7 and Theorem 4.2.1]{Wa1}, asserts that a $U(\h)-$finitely generated, $\z(U(\h))-$finite, $(\h,L)-$module has a finite composition series, whence, we conclude that the representation of $U(\h)$ in  $U(\h)k_w$ has a finite composition series.  Thus,  $\,H^2(G,\tau)_{K-fin}$ contains an irreducible subrepresentation for $U(\h).$ Hence, \cite[Lemma 1.5]{K3} yields that  $H^2(G,\tau )_{K-fin}$ is infinitesimally discretely decomposable as $\h-$mod\-ule. Finally, since $\pi_\lambda$ is unitary,   in \cite[Theorem 4.2.6]{K4},   we find a proof that an   algebraically (infinitesimally) discretely decomposable unitary representation is Hilbert discrete decomposable; hence,     $\pi_\lambda$ is  discretely decomposable.
\end{proof}

Now, we are ready to show  Theorem~\ref{prop:cuspcrit}.  \begin{proof}[Proof of Theorem~\ref{prop:cuspcrit}]
	For the direct implication, the hypothesis is that $\pi_\lambda$ restricted to $H$ is discretely decomposable. Thus, Proposition~\ref{prop:ddzhfinite}	yields \\ $K_\lambda(\cdot ,e)^\star w $ is $\z(U(\h))-$finite. Since in \cite{OV}, it is shown  $r_n$ is a continuous intertwining map for $H$, and $K_\lambda(\cdot, e)^\star w$ is a tempered function, a result of \cite{HHO} previously quoted, lets us conclude: $r_n (K_\lambda(\cdot, e)^\star w) $ is a tempered, $\z (U(\h))-$finite function on $H.$ A result of  Harish-Chandra, \cite{HC2}\cite[7.2.2]{Wa1}, implies $r_n (K_\lambda(\cdot, e)^\star w) $ is a cusp form. For the converse statement,
	results of Harish-Chandra asserts $L^2(H\times_{\tau_n} (\p/\p')^{(n)} \otimes W)_{disc}$ is a finite sum of discrete series representations for $H$ and the space of cusp forms in $L^2(H\times_{\tau_n} (\p/\p')^{(n)} \otimes W)$ is contained in $L^2(H\times_{\tau_n} (\p/\p')^{(n)} \otimes W)_{disc}$. Therefore, owing to our hypothesis,  for every $n$,  $r_n (K_\lambda(\cdot, e)^\star w) $ belongs to $L^2(H\times_{\tau_n} (\p/\p')^{(n)} \otimes W)_{disc}$. The $L^2-$continuity of $r_n$ yields $r_n(closure(\pi_\lambda (H)k_w))$ is contained in a finite sum of discrete series representations. Whence, $\oplus_n r_n$ maps continuously the closure of $\pi_\lambda (H)k_w$ into a discrete Hilbert sum of irreducible representations. Besides, the map $\oplus_n r_n$ is injective (the elements of $\,H^2(G,\tau)$ are real analytic functions).  Hence,   the closure of $\pi_\lambda (H)k_w$ is a discrete Hilbert sum of discrete series representations. We now proceed as in the direct proof Proposition~\ref{prop:ddzhfinite} and obtain $k_w$ is a left $\z (U(\h))-$finite function. Whence, Proposition~\ref{prop:ddzhfinite} lets us conclude $res_H(\pi_\lambda)$ is Hilbert discretely decomposable. 	The second equivalence follows from a simple computation.
\end{proof}	

\begin{rmk} A simple application of Theorem \ref{prop:cuspcrit} is that the tensor product representation in  $\,H^2(G,\tau)\boxtimes \,H^2(G,\tau)^\star$ restricted to diagonal of $G$ is never  discretely decomposable, because the lowest $K-$type trace spherical function for this particular tensor product is $\phi_0(x) \overline{\phi_0(y)},$ hence, restricted to $G$ it is not a cusp form.
\end{rmk}

 \section{Reproducing kernels and existence of discrete factors}\label{sec:repkediscfac}
  As usual $G,H,\,H^2(G,\tau)=V_\lambda, K_\lambda, H^2(H,\sigma)=V_\mu^H, K_\mu$ are as in Section~\ref{sec:prelim}. The purpose of this section is to begin an analysis of the relation between the matrix kernel $K_\lambda $ of a discrete series for $G$,  the matrix kernel $K_\mu$  of a discrete series for $H$ and the existence of a nonzero $H-$intertwining linear map from one representation  into  the other.  To begin with, since $K_\lambda$ is given by the spherical function associated to the lowest $K-$type of $\pi_\lambda^G$, the restriction of $y \mapsto tr (K_\lambda(y,x))=:k_\lambda (y,x)$ to $H$ is a square integrable function \cite{OV}. Let $k_\lambda :=tr K_\lambda, k_\mu:=tr K_\mu.$      We recall \cite{Va2},
  \begin{prop}\label{prop:klambdakmuate}
  	$ (k_\lambda(\cdot,e), k_\mu( \cdot,e))_{L^2(H)}$  nonzero implies  $V_\mu^H$ is a discrete factor for $res_H(\pi_\lambda^G).$
  \end{prop}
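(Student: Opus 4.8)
The plan is to turn a nonzero value of the pairing into an explicit nonzero continuous $H$-intertwining map $V_\mu^H\to \,H^2(G,\tau)$; once we have such a map, irreducibility of $V_\mu^H$ together with Schur's lemma forces its image to be a closed $H$-invariant subspace of $V_\lambda$ isomorphic to $V_\mu^H$, which is exactly the assertion that $V_\mu^H$ is a discrete factor of $res_H(\pi_\lambda^G)$.

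First I would record the structural form of the two trace functions as finite sums of lowest-type matrix coefficients. Write $\rho:=res_H(\pi_\lambda^G)$ for the action on $V_\lambda=\,H^2(G,\tau)$. Using the model identity $K_\lambda(y,x)=e_1\circ P_{V_\lambda[W]}\,\pi_\lambda(x^{-1}y)\,P_{V_\lambda[W]}\circ(\text{embedding})$ recorded in the proof of Proposition~\ref{prop:klmchar}, one obtains, for an orthonormal basis $\{f_i\}$ of the lowest-$K$-type space $V_\lambda[W]$,
\[
k_\lambda(h,e)=c_\lambda\sum_i (\rho(h)f_i,f_i)_{V_\lambda},\qquad h\in H,
\]
with $c_\lambda\neq 0$; applying the same description to $H$ gives $k_\mu(h,e)=c_\mu\sum_\alpha (\pi_\mu^H(h)g_\alpha,g_\alpha)_{V_\mu^H}$ for an orthonormal basis $\{g_\alpha\}$ of $V_\mu^H[Z]$. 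The analytic input I would quote from \cite{OV} is that the lowest-$K$-type matrix coefficients of $\pi_\lambda$ restrict to square-integrable functions on $H$; hence each $h\mapsto (\rho(h)f_i,f_i)_{V_\lambda}$ lies in $L^2(H)$, while the coefficients $h\mapsto (\pi_\mu^H(h)g_\alpha,g_\alpha)$ lie in $L^2(H)$ by square-integrability of $\pi_\mu^H$.

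Next I would introduce, for $\xi\in V_\lambda$ and $\eta\in V_\mu^H$, the wave-packet operator
\[
T_{\xi,\eta}(v):=d_\mu\int_H (v,\pi_\mu^H(h)\eta)_{V_\mu^H}\,\rho(h)\xi\,dh,\qquad v\in V_\mu^H.
\]
A one-line change of variables using invariance of Haar measure on $H$ gives $\rho(h_0)\,T_{\xi,\eta}=T_{\xi,\eta}\,\pi_\mu^H(h_0)$, so $T_{\xi,\eta}$ is an $H$-map $V_\mu^H\to V_\lambda$; its boundedness is the standard consequence of Harish-Chandra's orthogonality relations for the square-integrable $\pi_\mu^H$. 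Pairing against $\xi'\in V_\lambda$ gives $(T_{\xi,\eta}(v),\xi')_{V_\lambda}=d_\mu\int_H (v,\pi_\mu^H(h)\eta)_{V_\mu^H}(\rho(h)\xi,\xi')_{V_\lambda}\,dh$. Taking $\xi=\xi'=f_i$, $v=\eta=g_\alpha$ and summing, Fubini (legitimate since the sums are finite and the integrands are products of two $L^2(H)$-functions, hence in $L^1(H)$, by the previous paragraph) yields
\[
(k_\lambda(\cdot,e),k_\mu(\cdot,e))_{L^2(H)}=\frac{c_\lambda\overline{c_\mu}}{d_\mu}\sum_{i,\alpha}(T_{f_i,g_\alpha}(g_\alpha),f_i)_{V_\lambda}.
\]

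Finally I would conclude: if the left-hand side is nonzero then some summand $(T_{f_i,g_\alpha}(g_\alpha),f_i)_{V_\lambda}$ is nonzero, so the corresponding $T_{f_i,g_\alpha}\colon V_\mu^H\to V_\lambda$ is a nonzero continuous $H$-map. As $V_\mu^H$ is irreducible, $T_{f_i,g_\alpha}$ is injective and, by Schur's lemma, $T_{f_i,g_\alpha}^\star T_{f_i,g_\alpha}$ is a positive scalar, so $T_{f_i,g_\alpha}$ is a multiple of an isometric $H$-embedding with closed image. Thus $V_\mu^H$ occurs as a closed $H$-invariant subspace of $res_H(\pi_\lambda^G)$, i.e. it is a discrete factor. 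I expect the main obstacle to be analytic rather than algebraic: precisely the justification that the individual lowest-type matrix coefficients of $\pi_\lambda$ restrict to $L^2(H)$ (so that both the pairing and the Fubini interchange above are valid) and that $T_{\xi,\eta}$ is genuinely bounded; both rest on the square-integrability on $H$ furnished by \cite{OV} and on the orthogonality relations for $\pi_\mu^H$. Note that only this one implication is needed, and the argument delivers exactly it.
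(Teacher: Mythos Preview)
Your argument is correct and is essentially the paper's approach made explicit: the paper reduces the proposition immediately to Fact~\ref{prop:discretefactor} (taken from \cite{Va2}), and the content of that fact is precisely your construction of the wave-packet operator $T_{\xi,\eta}$ together with the observation that nonvanishing of the matrix-coefficient pairing forces some $T_{f_i,g_\alpha}$ to be nonzero. Your boundedness claim for $T_{\xi,\eta}$ via the orthogonality relations (which, unwound, gives $\|T_{\xi,\eta}(v)\|^2=d_\mu\|v\|^2\int_H(\rho(s)\xi,\xi)\overline{(\pi_\mu^H(s)\eta,\eta)}\,ds$, finite by \cite{OV}) is exactly the analytic point behind Fact~\ref{prop:discretefactor}.
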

   The Proposition is an immediate consequence of the following
  \begin{fact}\label{prop:discretefactor} \cite{Va2} Let $G,H,\pi_\lambda$ be as usual. Let $(\rho , V_1)$ an irreducible square integrable for $H$. We assume there exists smooth vector $w_1,w_2$ in $ V_\lambda$ and a $L-$finite vector $z$ in $V_1$ so that $$\int_H (\rho(h) z,z)_{V_1} \overline{( \pi_\lambda (h)w_1, w_2)_{V_{\lambda}}}dh \not= 0. $$ Then, there exists a nonzero, continuous linear $H-$map from $V_1$ into $V_\lambda.$
  \end{fact}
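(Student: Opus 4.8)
The plan is to realize the desired embedding as an explicit operator-valued integral and then promote it to a continuous map using the irreducibility of $V_1$ together with an automatic-continuity argument. Fix the $L$-finite vector $z\in V_1$ and the vector $w_1\in V_\lambda$ from the hypothesis, and define, for $v$ ranging over the $(\h,L)$-module $(V_1)_{L\text{-fin}}$,
$$T v \;=\; \int_H (v,\rho(h)z)_{V_1}\,\pi_\lambda(h)w_1\,dh .$$
First I would check that this integral converges in $V_\lambda$. Since $z$ and $v$ are $L$-finite, the matrix coefficient $h\mapsto (v,\rho(h)z)_{V_1}$ of the square integrable representation $\rho$ lies in the Harish-Chandra Schwartz space of $H$, hence in $L^1(H)$ by Harish-Chandra's rapid-decay estimates \cite{HC2}; because $\pi_\lambda$ is unitary the orbit map $h\mapsto \pi_\lambda(h)w_1$ is bounded in norm, so the integrand is Bochner integrable and $\Vert Tv\Vert_{V_\lambda}\le \Vert w_1\Vert\,\Vert (v,\rho(\cdot)z)\Vert_{L^1(H)}<\infty$. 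Thus $Tv$ is a genuine element of the Hilbert space $V_\lambda$, not merely a functional on smooth vectors.

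Next I would record the two structural properties. A change of variables $h\mapsto gh$ gives $T\rho(g)=\pi_\lambda(g)T$, so $T$ is $H$-equivariant; in particular, since $\rho(L)v$ spans a finite-dimensional space, so does $\pi_\lambda(L)Tv=T\rho(L)v$, whence $T$ carries $(V_1)_{L\text{-fin}}$ into $(V_\lambda)_{L\text{-fin}}$ and (after differentiating the group action on these analytic vectors) is a homomorphism of $(\h,L)$-modules. For the nonvanishing I would take $v=z$ and pair against $w_2$: using $(z,\rho(h)z)_{V_1}=\overline{(\rho(h)z,z)_{V_1}}$ and unitarity to justify absolute convergence, one gets
$$(Tz,w_2)_{V_\lambda}=\int_H \overline{(\rho(h)z,z)_{V_1}}\,(\pi_\lambda(h)w_1,w_2)_{V_\lambda}\,dh,$$
which is exactly the complex conjugate of the integral assumed nonzero in the hypothesis. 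Hence $Tz\neq 0$ and $T\neq 0$.

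Finally I would promote $T$ to a continuous $H$-map. Being a nonzero homomorphism out of the irreducible $(\h,L)$-module $(V_1)_{L\text{-fin}}$, the operator $T$ is injective and its image is an irreducible $(\h,L)$-submodule of $(V_\lambda)_{L\text{-fin}}$ whose Hilbert-space closure is therefore an irreducible unitary $H$-subrepresentation of $V_\lambda$ carrying the same Harish-Chandra module. Applying the extension lemma \cite[Lemma 3.4.11]{Wa1} (used earlier in the paper for the analogous $G$-situation) to this identification, I obtain a continuous intertwining $H$-map $V_1\to V_\lambda$ extending $T$, which is the assertion. The main obstacle is the very first step, namely guaranteeing that the defining integral produces a vector of the Hilbert space $V_\lambda$; this is where the $L$-finiteness of $z$ (forcing $L^1$-decay of the $H$-matrix coefficient) and the unitarity of $\pi_\lambda$ are essential. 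It is also the point at which the temperedness of restricted discrete-series matrix coefficients \cite{HHO} would have to enter if one wished to weaken the integrability by working with the Schwartz-against-tempered pairing instead.
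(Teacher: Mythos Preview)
The paper does not actually prove this Fact; it is quoted from \cite{Va2}. So there is no ``paper's own proof'' to compare against. Nonetheless, your argument has a genuine gap at the very point you flag as the main obstacle.

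Your convergence step asserts that because $v$ and $z$ are $L$-finite, the matrix coefficient $h\mapsto (v,\rho(h)z)_{V_1}$ lies in the Harish-Chandra Schwartz space of $H$ and is therefore in $L^1(H)$. The first half is correct: $L$-finite matrix coefficients of a discrete series are cusp forms and hence belong to $\mathscr C(H)$. The second half is false: $\mathscr C(H)\subset L^2(H)$, but $\mathscr C(H)\not\subset L^1(H)$ in general, since functions in $\mathscr C(H)$ are only controlled by $\Xi_H(1+\sigma)^{-n}$ and $\Xi_H\notin L^1(H)$. The containment you want holds exactly when $\rho$ is an \emph{integrable} discrete series (compare the Trombi--Varadarajan, Hecht--Schmid criterion recalled after Definition~\ref{dfn:integ}, and the way the paper itself has to invoke integrability of $\pi_\lambda$ in the proof of Theorem~\ref{prop:symbrea}\,b) to get $L^1$-decay). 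For a non-integrable $\rho$ the Bochner integral defining $Tv$ need not converge in $V_\lambda$, so your operator is not well defined as written.

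The repair is exactly what you gesture at in your last sentence but do not carry out: replace the Bochner integral by a weak pairing. For $v\in(V_1)_{L\text{-fin}}$ and $w$ a $K$-finite vector in $V_\lambda$, both $h\mapsto (v,\rho(h)z)$ and $h\mapsto (\pi_\lambda(h)w_1,w)\big|_H$ lie in $L^2(H)$ (the latter by \cite{OV}, as used throughout the paper), so their product is integrable and $(Tv,w):=\int_H(v,\rho(h)z)(\pi_\lambda(h)w_1,w)\,dh$ is well defined. One must then argue that this linear functional on $K$-finite vectors is bounded (e.g.\ via the $L^{2-\epsilon}$ estimate for discrete-series coefficients together with the Kunze--Stein phenomenon \cite{C}, in the spirit of Proposition~5.1 and subsection~\ref{subsec:kernl2w}), after which your equivariance, nonvanishing, and extension steps go through. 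Without that boundedness argument the proof is incomplete.
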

  \begin{rmk} \label{rmk:contraejemplo} The converse to Proposition~\ref{prop:klambdakmuate}  is false, as it shows the following example. We consider   arbitrary $G,H$ and we assume $res_H(\pi_\lambda^G)$ is an $H-$admissible representation. Since $\,H^2(G,\tau)[W]$ consists of $L-$finite vectors, the subspace $\,H^2(G,\tau)[W]$ is contained in a finite sum of irreducible $H-$subrepresentations. Let $\pi_\mu^H $ be a representation for $H$ whose corresponding isotypic component is nonzero and it is  contained in the orthogonal to the subspace spanned by    the $H-$isotypic  components whose intersection with  $\,H^2(G,\tau)[W]$ is nonzero.  Since, for $w \in W$,  $K_\lambda(\cdot,e)^\star w \in \,H^2(G,\tau)[W]$,  we have that both $tr(K_\mu(\cdot,e))=k_\mu(\cdot,e)$ and $tr(K_\lambda (\cdot,e))=k_\lambda (\cdot,e)$ are linear combination of matrix coefficients of nonequivalent representations for $H,$ the orthogonality relations implies $ (k_\lambda(\cdot,e), k_\mu(\cdot,e))_{L^2(H)}= 0.$ \end{rmk}  A way to show a converse to Proposition~\ref{prop:klambdakmuate} is to consider the functions
  $$ \Upsilon_{\mu,\lambda} (y,x)= \int_H  k_\lambda (hy,x) K_\mu(h,e)^\star  \, dh. $$   $$ \upsilon_{\lambda, \mu} (y,x):= \int_H k_\mu(h_1,e) K_\lambda (h_1y , x)^\star  dh_1.$$ Both integrals converge absolutely for every $x,y \in G$ owing to in \cite{OV} it is shown both integrands are product of square integrable functions. Moreover, the  functions $\Upsilon,$ $\upsilon$ are  continuous  because each integral can be rewritten as a convolution product  and we recall the fact: convolution of two square integrable functions is defined and yields a continuous function. \\
 Under the supposition  the  representation $  \pi_\mu^H$ is integrable it readily follows the smoothness of both functions as well as that each function is a Carleman kernel. In fact,  we notice $z\mapsto K_\lambda(zy,x))$ belongs to either  the Frechet space $(L^2(G)\otimes Hom_\C(W,W))^\infty$ or to   the Hilbert space $L^2(G)\otimes Hom_\C(W,W)$ and  $k_\mu (\cdot,e) \in L^1(H)$. Next, we  write $\upsilon (y,x)=res_H(L)_{(k_\mu(\cdot,e))} (z\mapsto K_\lambda(zy,x))$   Therefore,  both functions $\upsilon(\cdot,x), \upsilon (y,\cdot) $ are smooth and square integrable. For the function $\Upsilon$, we write $\Upsilon (y,x)= res_H(L)_{(K_\mu(\cdot,e)^\star)}(z\mapsto k_\lambda (zy,x))$ and proceed as for $\upsilon.$
      Among properties of the functions are: \\ 0)  $tr(\Upsilon (e,e))= (k_\lambda (\cdot,e), k_\mu(\cdot,e))_{L^2(H)},$  $tr(\Upsilon (y,x))=\overline{tr (\upsilon(y,x))}$. \\ 1) $\Upsilon (s_1y
  k,s_2xk)=  \sigma(s_1) \Upsilon_{\mu,\lambda}(y,x) \sigma (s_2^{-1}) , \text{for} \, x \in G, s_j \in L, k \in K.$\\
  2) The function $\Upsilon_{\mu,\lambda}$ is real analytic. \\  This is because
  the distribution on $G$ defined by either the function $y\mapsto \Upsilon_{\mu,\lambda} (y,x) $ or $x \mapsto \Upsilon_{\mu,\lambda} (y,x)$ is a real analytic $L-$spherical   function. Indeed, both distributions are eigenfunctions of the elliptic differential operator $R_{\Omega_G} +2 R_{\Omega_K}$,   the regularity Theorem
  leads us to the real analyticity.\\
  3) $\Upsilon_{\mu,\lambda} (y,x)= (\Upsilon_{\mu,\lambda} (x,y))^\star.$\\
  4) Remark~\ref{rmk:contraejemplo} shows that some times $\Upsilon(e,e)=0$.  \\
  5) $\upsilon(syk_1,sxk_2)=\tau(k_2^{-1})\upsilon(y,x)\tau(k_1), k_1,k_2 \in K, s \in L$.\\
  The main result of this subsection is:
  \begin{thm} \label{prop:twofunctions} Let $G,H, (\pi_\lambda, \,H^2(G,\tau))$, $ (\pi_\mu^H, V_\mu^H =H^2(H, \sigma ))$ as in Section~\ref{sec:prelim}. The following  four  conditions are equivalent
  	
  	a) The isotypic component $\,H^2(G,\tau)[V_\mu^H]$ is not zero.
  	
  	b) The function $\Upsilon_{\mu,\lambda} $ is nonzero.
  	
  	c) The function  $tr(\Upsilon_{\mu, \lambda}) $ is nonzero.
  	
  	d) There exists $D \in U(\g \times \g) $ so that $[L_D (\Upsilon)] (e,e) \not= 0. $
  	
  	e) The function $\upsilon_{\lambda, \mu}$ is nonzero.
  \end{thm}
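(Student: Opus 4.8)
The plan is to reduce all five conditions to a single nonnegativity identity for the diagonal values of $\Upsilon_{\mu,\lambda}$, extracted from the orthogonality relations of Harish--Chandra. First I would record the scalar form of the kernel. Since $K_\lambda(a,b)=K_\lambda(b^{-1}a,e)$ and $\phi_\lambda:=\mathrm{tr}\,K_\lambda(\cdot,e)$ is the lowest $K$-type spherical trace function, one has $\Upsilon_{\mu,\lambda}(y,x)=\int_H \phi_\lambda(x^{-1}hy)\,K_\mu(h,e)^\star\,dh$ and $\mathrm{tr}\,\Upsilon_{\mu,\lambda}(y,x)=\int_H \phi_\lambda(x^{-1}hy)\overline{\phi_\mu(h)}\,dh$, both absolutely convergent because the restrictions of $K_\lambda,K_\mu$ to $H$ are square integrable. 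Writing $\phi_\lambda(g)=c_\lambda\sum_j(\pi_\lambda(g)f_j,f_j)_{V_\lambda}$ with $\{f_j\}$ an orthonormal basis of the lowest $K$-type $V_\lambda[W]$ and $k_z:=K_\mu(\cdot,e)^\star z\in V_\mu^H$, the identity $(K_\mu(h,e)^\star z,z)_Z=\overline{(\pi_\mu^H(h)k_z,k_z)_{L^2(H)}}$ gives
\[(\Upsilon_{\mu,\lambda}(x,x)z,z)_Z=c_\lambda\sum_j\int_H(\pi_\lambda(h)\pi_\lambda(x)f_j,\pi_\lambda(x)f_j)\,\overline{(\pi_\mu^H(h)k_z,k_z)}\,dh.\]

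The heart of the argument, and the step I expect to be the main obstacle, is evaluating each $L^2(H)$-pairing. Fixing $a:=\pi_\lambda(x)f_j$ and decomposing $V_\lambda[V_\mu^H]=\bigoplus_\alpha T_\alpha(V_\mu^H)$ into irreducible $H$-pieces (the $T_\alpha$ isometric with mutually orthogonal images, the multiplicity possibly infinite), the diagonal coefficient splits as $(\pi_\lambda(h)a,a)=\sum_\alpha(\pi_\mu^H(h)T_\alpha^\star a,T_\alpha^\star a)+(\pi_\lambda(h)a^\perp,a^\perp)$, the cross terms dying by $H$-invariance of the decomposition. Since this coefficient lies in $L^2(H)$, the orthogonality relations of Harish--Chandra (\cite[Lemma 84]{HC2}, exactly as in the proof of Proposition~\ref{prop:klmchar}), together with the Plancherel orthogonality of the discrete series $\pi_\mu^H$ against the remaining (other discrete and continuous) spectrum of $L^2(H)$, annihilate the $a^\perp$-term and leave $\int_H(\pi_\lambda(h)a,a)\overline{(\pi_\mu^H(h)k_z,k_z)}\,dh=\tfrac1{d_\mu}\sum_\alpha|(T_\alpha^\star a,k_z)_{V_\mu}|^2$. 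Summing over $j$ yields the master formula
\[(\Upsilon_{\mu,\lambda}(x,x)z,z)_Z=\frac{c_\lambda}{d_\mu}\sum_{j,\alpha}\bigl|(\pi_\lambda(x)f_j,\,T_\alpha k_z)_{V_\lambda}\bigr|^2\ \ge\ 0.\]
The care needed here is justifying that the (possibly infinite) multiplicity sum converges and may be interchanged with the integral, and that the continuous spectrum contributes nothing; both are controlled by the $L^2(H)$-membership and the orthogonality relations already used in Section 7.

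With this identity the equivalences follow quickly. Run with independent arguments, the same computation shows $\Upsilon_{\mu,\lambda}$ is a positive-definite $\mathrm{End}(Z)$-valued kernel, so Cauchy--Schwarz gives $|(\Upsilon(y,x)z,z')|^2\le(\Upsilon(x,x)z,z)(\Upsilon(y,y)z',z')$; hence $\Upsilon(x,x)=0$ for all $x$ forces $\Upsilon\equiv0$, while $\Upsilon(x,x)\ge0$ gives $\mathrm{tr}\,\Upsilon(x,x)=0\iff\Upsilon(x,x)=0$. This yields b)$\iff$c). For a)$\iff$c): if $V_\lambda[V_\mu^H]\ne0$ then some $T_\alpha\ne0$, and since $z\ne0\Rightarrow k_z\ne0$ while $\{\pi_\lambda(x)f_j\}$ spans a dense subspace of the irreducible $V_\lambda$, the master formula is strictly positive for suitable $x,z$, giving c); conversely, if $\mathrm{tr}\,\Upsilon\not\equiv0$ then $\Upsilon(x,x)\ne0$ for some $x$, so the right-hand side is nonzero, forcing some $T_\alpha\ne0$, i.e. a). (Alternatively, c)$\Rightarrow$a) is immediate from Fact~\ref{prop:discretefactor} after expanding $\mathrm{tr}\,\Upsilon$ into integrals of products of matrix coefficients.)

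Finally, b)$\iff$d) is the soft part: by property 2 the function $\Upsilon_{\mu,\lambda}$ is real analytic and $G\times G$ is connected, so $\Upsilon\equiv0$ iff its full jet at $(e,e)$ vanishes, i.e. iff $L_D\Upsilon(e,e)=0$ for every $D\in U(\g\times\g)$ (the left-invariant operators capture the Taylor expansion at the identity); this is precisely the negation of d). For e), property 0 gives $\mathrm{tr}\,\upsilon_{\lambda,\mu}=\overline{\mathrm{tr}\,\Upsilon_{\mu,\lambda}}$, so c) is equivalent to $\mathrm{tr}\,\upsilon\not\equiv0$; running the analogue of the master formula for $\upsilon_{\lambda,\mu}$ (a positive-definite $\mathrm{End}(W)$-valued kernel, with $\upsilon(x,x)\ge0$) shows $\upsilon\not\equiv0\iff\mathrm{tr}\,\upsilon\not\equiv0$, closing the loop a)$\iff$b)$\iff$c)$\iff$d)$\iff$e).
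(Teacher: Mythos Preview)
Your approach is genuinely different from the paper's and aims for more: you derive a nonnegativity identity (the ``master formula'') for the diagonal values $(\Upsilon_{\mu,\lambda}(x,x)z,z)$, from which all five equivalences cascade via positivity and Cauchy--Schwarz. The paper instead runs the cycle $a)\Rightarrow b)\Leftrightarrow c)\Rightarrow e)\Rightarrow a)$ with lighter tools at each step: for $a)\Rightarrow b)$ it picks a nonzero intertwiner $T:V_\mu^H\to H^2(G,\tau)$, writes $T(K_\mu(\cdot,e)^\star z)\ne0$ via the reproducing kernel $k_\lambda$, and after the substitution $y\mapsto hy$ reads off $\Upsilon\not\equiv0$ directly---no orthogonality relations at all; for $b)\Leftrightarrow c)$ it observes that the span of the values of $\Upsilon_{\mu,\lambda}$ is an $L\times L$-invariant subspace of $\mathrm{End}_\C(Z)$, hence equals $\mathrm{End}_\C(Z)$ (by irreducibility of $\sigma$) unless $\Upsilon\equiv0$; and $e)\Rightarrow a)$ is exactly Fact~\ref{prop:discretefactor}. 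Your $b)\Leftrightarrow d)$ via real analyticity matches the paper.

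Your route is conceptually attractive and gives a sharper statement, but the steps you flag as ``the main obstacle'' are not settled in the proposal and cannot be lifted from the place you cite. First, to split the integral termwise you need $h\mapsto(\pi_\lambda(h)a,a)\vert_H\in L^2(H)$ for $a=\pi_\lambda(x)f_j$; this does follow from the paper's standing input $h\mapsto k_\lambda(hy,x)\in L^2(H)$ for all $x,y$ (\cite{OV}) by taking $y=xk$, $k\in K$, and using irreducibility of $\tau$ to isolate single matrix entries, but you should say so. Second, the vanishing of the $a^\perp$-contribution cannot be imported from Proposition~\ref{prop:klmchar}, whose proof assumes $H$-admissibility (not assumed here); the honest argument is to apply Fact~\ref{prop:discretefactor} to the $H$-representation $V_\lambda[V_\mu^H]^\perp$: a nonzero integral would yield a copy of $V_\mu^H$ inside that orthogonal complement, which is absurd. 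Third, for possibly infinite multiplicity the interchange of $\sum_\alpha$ with $\int_H$ is justified because $\sum_\alpha\|(\pi_\mu^H(\cdot)T_\alpha^\star a,T_\alpha^\star a)\|_{L^2(H)}=d_\mu^{-1/2}\sum_\alpha\|T_\alpha^\star a\|^2=d_\mu^{-1/2}\|P_{\lambda,\mu}a\|^2<\infty$, so the series converges in $L^2(H)$. With these points supplied your argument goes through; the paper's proof simply bypasses them by never evaluating the integral and by replacing your positive-definiteness step with the one-line $L\times L$-irreducibility trick for $b)\Leftrightarrow c)$.
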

  \begin{proof}[Proof of Theorem~\ref{prop:twofunctions}] We show  $a)\Rightarrow b) \Leftrightarrow c) \Rightarrow e) \Rightarrow a) $ and $b) \Leftrightarrow d).$ Some of the implications are obvious.
  	{\it a)} implies {\it b)}.  By hypothesis, there exists  a
  	nonzero intertwining map  $T : H^2(H,\sigma) \rightarrow \,H^2(G,\tau)$. Owing to Schur's Lemma $T$ is an injective map. Let $K_T : H\times G \rightarrow Hom_\mathbb C (Z,W)$ be the kernel that represents $T$. We know, for $z \in Z,$ the function $h \mapsto K_\mu(h,e)^\star z$ belongs to $H^2(H,\sigma)$ and it is nonzero. Thus, $T(K_\mu(\cdot,e)^\star z)$ is a nonzero function  for each $z \in Z.$ Since $k_\lambda$ represents $P_\lambda$ we obtain
  	$$\int_G  \int_H k_\lambda (y,x) K_T(h,y) K_\mu(h,e)^\star z\, dh dy \not= 0 \, \text{\, for\,some\, }\, x \in G. $$
  	Now $K_T(h,y)=K_T(e,h^{-1}y).$ Thus, after we replace $y$ by $hy$, we recall Haar measure is left invariant and $k_\lambda$ is a complex valued function,  the left hand side becomes
  	$$\int_G  K_T(e,y)\int_H  k_\lambda (hy,x) K_\mu(h,e)^\star z \, dh dy . $$  Thus, $\Upsilon $ is not the zero function. We now show $b) \Leftrightarrow c).$  The equality 1) forces the linear span of the image of $\Upsilon_{\mu,\lambda}$ is an $L\times L$ invariant subspace of $End_\C (Z).$ Thus, owing to $End_\C(Z)$ is an $L\times L-$module irreducible, we have there exists $(y,x)$ so that $tr(\Upsilon_{\mu,\lambda}(y,x))\not= 0$ unless  $\Upsilon_{\mu,\lambda}$ is equal to the zero map.
  	$c) \Rightarrow e)$ follows from  $tr(\upsilon_{\lambda, \mu})=\overline{tr(\Upsilon_{\mu,\lambda})}.$
  	To show $e) \Rightarrow a)$. We notice that the hypothesis implies we may apply Fact~\ref{prop:discretefactor}. Therefore the isotypic component is nonzero. The two functions are real analytic, thus, we conclude the proof. \end{proof}
  \subsection{Examples of discrete factors in $res_H(\pi_\lambda)$}\label{subsec:somediscrfactors}
  As before, we  fix  $G,H, \\ K,  L $ and the representation  $(\pi_\lambda, H^2(G, \tau))$.  A consequence of Proposition~\ref{prop:klambdakmuate}  is the following result   shown in \cite{Va},
  \begin{fact}\label{fact:fact:A}   We suppose   $(\sigma, Z)$ is an $L-$subrepresentation of the lowest $K-$type $(\tau, W)$. We further assume there exists a discrete series representation  $H^2(H,\sigma)$ with lowest $L-$type $(\sigma, Z).$ Then, there exists a nonzero intertwining map from $H^2(H,\sigma)$ into $\,H^2(G,\tau)$. \
  \end{fact}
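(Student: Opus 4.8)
The plan is to deduce Fact~\ref{fact:fact:A} from Proposition~\ref{prop:klambdakmuate}: it suffices to prove that the pairing $(k_\lambda(\cdot,e),k_\mu(\cdot,e))_{L^2(H)}$ is nonzero, for then $V_\mu^H$ is forced to be a discrete factor of $res_H(\pi_\lambda^G)$, which is exactly the asserted intertwining map. The inner product makes sense because, as recorded at the start of Section~\ref{sec:repkediscfac}, $k_\lambda(\cdot,e)|_H$ and $k_\mu(\cdot,e)$ are square integrable on $H$. (Equivalently one could aim straight at Fact~\ref{prop:discretefactor}, since expanding the two traces below exhibits this pairing as a finite sum of the integrals appearing there, so that a single nonvanishing summand already suffices.)

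First I would identify $k_\lambda(\cdot,e)$ and $k_\mu(\cdot,e)$ with lowest--type spherical trace functions. Unwinding the reproducing kernel as in Subsection~\ref{subsec:kernl2w}, with $\{f_j\}$ an orthonormal basis of $\,H^2(G,\tau)[W]$, one gets $k_\lambda(y,e)=tr\,K_\lambda(y,e)$ equal to a positive multiple of $\phi_\lambda(y)=d_\lambda\sum_j(\pi_\lambda(y)f_j,f_j)$, and likewise $k_\mu(\cdot,e)$ a positive multiple of the lowest--$L$--type spherical trace function $\phi_\mu$ of $V_\mu^H$. Two consequences of Schur orthogonality for the discrete series $\pi_\mu^H$ are then recorded: $\overline{\phi_\mu(h)}=\phi_\mu(h^{-1})$ and $\phi_\mu\ast\phi_\mu=\phi_\mu$ on $H$. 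Hence the weakly defined operator $Q:=\pi_\lambda(\overline{\phi_\mu})=\int_H\overline{\phi_\mu(h)}\,\pi_\lambda(h)\,dh$ is self-adjoint and idempotent, i.e. an orthogonal projection (convergence controlled, as elsewhere in the paper, by $\phi_\mu\in L^{2-\epsilon}(H)$ and Kunze--Stein). A short computation gives $(k_\lambda(\cdot,e),k_\mu(\cdot,e))_{L^2(H)}=c\sum_j\Vert Qf_j\Vert^2\ge 0$ with $c>0$, so the pairing is nonzero exactly when $Q$ does not annihilate the lowest $K$-type $\,H^2(G,\tau)[W]$. As $Q$ is the projection onto the $\pi_\mu^H$-isotypic (lowest-$\sigma$-type) part of $res_H(\pi_\lambda)$, everything reduces to showing that this part meets $\,H^2(G,\tau)[W]$.

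I would reinterpret this nonvanishing through an explicit $H$-map in order to locate the content. Let $r_0:\,H^2(G,\tau)\to L^2(H\times_{res_L(\tau)}W)$ be the restriction map, the case $n=0$ of Example~\ref{examp:rn}, which is continuous and $H$-equivariant, and let $\iota:Z\hookrightarrow W$ be an $L$-embedding, which exists by hypothesis and induces an isometric $H$-map $\iota_\ast:L^2(H\times_\sigma Z)\to L^2(H\times_{res_L(\tau)}W)$. Then $S_0:=r_0^\star\circ\iota_\ast$, restricted to $V_\mu^H=H^2(H,\sigma)$, is a continuous $H$-map into $\,H^2(G,\tau)$, with $S_0(F)(x)=\int_H K_\lambda(h,x)\,\iota(F(h))\,dh$. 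To detect $S_0$ I pass to its leading $L$-symbol: composing with the $L$-equivariant evaluation $e_e:\,H^2(G,\tau)\to W$, $f\mapsto f(e)$, and with $\iota_\mu:Z\to V_\mu^H$, $z\mapsto K_\mu(\cdot,e)^\star z$, yields $\Xi:=e_e\circ S_0\circ\iota_\mu\in Hom_L(Z,W)$, namely $\Xi(z)=\int_H\Phi_0(h)\,\iota(K_\mu(h,e)^\star z)\,dh$. Pairing $\Xi$ against $\iota$ recovers the inner product up to a positive constant; thus $(k_\lambda(\cdot,e),k_\mu(\cdot,e))_{L^2(H)}\neq 0$ is equivalent to $\Xi\neq 0$, and in that case $S_0$ is already the desired nonzero map.

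The hard part is exactly this last nonvanishing of $\Xi$, and it is where both hypotheses---that $(\sigma,Z)$ embeds in $res_L(\tau)$ and that it is the minimal $L$-type of $V_\mu^H$---must genuinely enter; Remark~\ref{rmk:contraejemplo} shows that when $Z$ is incompatible with $W$ the pairing vanishes even though $\mu$ occurs, so no purely formal argument can succeed. The natural route is to compute the leading coefficient of Harish-Chandra's expansion of the lowest-$K$-type spherical function $\Phi_0$ restricted to $H$ at the minimal $L$-type $\sigma$: because $(\sigma,Z)$ is the lowest $L$-type of $V_\mu^H$, the vector $K_\mu(\cdot,e)^\star z$ is governed by this coefficient, and its contraction with $\Phi_0|_H$ pairs the copy of $Z$ sitting inside $W$ against itself through $\Phi_0(e)=c_\lambda\,\mathrm{Id}_W$ with $c_\lambda>0$, forcing $\Xi$ to be a nonzero multiple of $\iota$. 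This leading-term computation---essentially the argument of Vargas in \cite{Va}---is the only non-formal ingredient; once $\Xi\neq 0$ is established, $S_0\neq 0$, the pairing is nonzero, and Proposition~\ref{prop:klambdakmuate} delivers a nonzero intertwining map from $H^2(H,\sigma)$ into $\,H^2(G,\tau)$.
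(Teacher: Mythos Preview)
Your approach is essentially the paper's own: reduce to the nonvanishing of $(k_\lambda(\cdot,e),k_\mu(\cdot,e))_{L^2(H)}$ and invoke Proposition~\ref{prop:klambdakmuate} (equivalently Fact~\ref{prop:discretefactor}), then defer the actual nonvanishing to the Flensted--Jensen integral formulae as carried out in \cite{Va}. The paper's proof sketch says exactly this and nothing more. Your intermediate recastings---the projector $Q=\pi_\lambda(\overline{\phi_\mu})$, the map $S_0=r_0^\star\circ\iota_\ast$, and the leading symbol $\Xi$---are correct and give a helpful structural picture, but they do not circumvent the analytic core. In particular, the heuristic that $\Xi\neq 0$ because $\Phi_0(e)=c_\lambda\,\mathrm{Id}_W$ is not a proof: the value at the identity says nothing about the integral over $H$, and Remark~\ref{rmk:contraejemplo} already illustrates that such pairings can vanish. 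You acknowledge this yourself when you point to \cite{Va} for the computation, so in the end your argument and the paper's are the same, with the same black box.
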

   The proof for this fact  is based on an explicit integral formulae for $k_\mu, k_\lambda$ obtained by Flensted-Jensen, these formulae  let us to apply Proposition~\ref{prop:discretefactor}.
  We present one application   of   fact~\ref{fact:fact:A} to  the analysis of the tensor product of two representations.  For other applications  (cf. \cite{Va2}).
  \begin{examp} Notation is as in Section~\ref{sec:prelim}.     We  produce some irreducible subrepresentation of
  	the restriction to the diagonal  subgroup $H=G_0$  of $G:=G_0 \times G_0$ ($G_0$ a semisimple Lie group) for  a tensor product
  	$\pi_\lambda \boxtimes \pi_{\lambda_1}. $    We choose both  Harish-Chandra   parameters to be dominant with respect to $\Psi$ and   so that
  	their sum is far away from the kernel of any noncompact simple  root. It readily follows that  $\pi_{\lambda +\lambda_1 +2\rho_n-\rho_c}^{K_0}$ is a subrepresentation of  the lowest $K_0\times K_0-$type for $\pi_\lambda \boxtimes \pi_{\lambda_1}. $ Due to our choice, the parameter   $\lambda +\lambda_1 +2\rho_n-\rho_c$ is far  from the noncompact walls for the Weyl chamber for $\Psi.$ It follows (cf. \cite{DHV}) that $\pi_{\lambda +\lambda_1 +2\rho_n-\rho_c}^{K_0}$  is the lowest $K_0-$type of the  discrete series $\pi_{\lambda +\lambda_1 +\rho_n-\rho_c}^{G_0}$.  Therefore, Fact~\ref{fact:fact:A} implies $res_{G_0}(\pi_\lambda \boxtimes \pi_{\lambda_1})$ contains the irreducible representation $\pi_{\lambda +\lambda_1 +\rho_n-\rho_c}^{G_0}.$ \end{examp}

  \section{Proof of Proposition~\ref{prop:propertieskt} and Theorem~\ref{prop:symbrea}}

  \begin{proof}[Proof of Proposition~\ref{prop:propertieskt}] A straightforward computation based on the invariance of Haar measure gives $f)$. To show $b)$, we notice $K_T(s^{-1}h,e)^\star w = K_T(h,s)^\star w= (\tau(s^{-1})K_S(h,e))^\star w = K_T(h,e)^\star \tau(s)w.$ Thus, dimension of   the span of $\{L_s (h\mapsto K_T(h,e)^\star w) : s \in L \}$ is less or equal than $dim W$.
	Next, we show  $a)$.
	Let $g \in L^2(H\times_\nu E)$ arbitrary,   we have  \begin{align*}
	(g(\cdot), K_T(\cdotp, y)^\star (w))_{L^2(H \times_\nu E)}  & = (Tg(y),w)_W \\ \nonumber & = (Tg(\cdot) ,K_\lambda (\cdot,y)^\star (w))_{L^2(G \times_\tau W)}\\  & = (g(\cdot), T^\star (t \mapsto K_\lambda(t,y)^\star (w))(\cdot))_{L^2(H \times_\nu E)}.
	\end{align*}  The first equality is justified by \ref{eq:tisintegral},    the second by   $P_\lambda (Tg)=Tg$ and (\ref{eq:Klambda}), the  third by definition of adjoint linear map. Since the functions $v \mapsto  K_T(v, y)^\star (w), v \mapsto [T^\star (t \mapsto K_\lambda(t,y)^\star (w))](v)$ belong to $L^2(H\times_\nu E)$ and $g$ is arbitrary in $L^2(H\times_\nu E)$ we obtain    a).\\ For further use we denote the subspace of smooth vectors in a representation $V$ by $V^\infty$. \\
	Next, we verify the map $K_T$ is smooth and that $K_T(\cdot ,x)^\star w$ is a smooth vector. The smoothness of $K_T$ follows from  $(h,x) \mapsto K_T (h,x)^\star w $ is smooth for each $w \in W.$ We first notice $K_\lambda (y,x) = K_\lambda (e,y^{-1} x)$ and $x \mapsto K_\lambda (x,e)^\star w$ is a $K-$finite vector in
	$\,H^2(G,\tau)$. Thus, $y \mapsto K_\lambda (y,x)^\star w=L_x( y \mapsto K_\lambda (y,e)^\star w)$ is a smooth vector in $ \,H^2(G,\tau)$ and hence
	the map $ G \ni x \mapsto  K_\lambda (\cdot ,x)^\star w) \in  \,H^2(G,\tau)$ is smooth. Since $T^\star$ is
	a continuous linear $H-$map, we have that  $ K_T(\cdot ,x)^\star w= T^\star ( y\mapsto K_\lambda (y,x)^\star w)(\cdot) \in  L^2(H\times_\nu E)^\infty $,  hence $G \ni x \mapsto T^\star ( y\mapsto K_\lambda (y,x)^\star w) \in L^2(H\times_\nu E)^\infty$  is a smooth map. Next,  \cite[section 1.6]{Wa1}),   we endow the space of smooth vectors in a representation $(\pi ,V)$ with the topology that a sequence of smooth vectors $v_n$ converges to a smooth vector $v$ iff $\pi(D)v_n$ converges to $\pi(D)v$ in norm for every $D \in U(\h)$.   Poulsen \cite[Proposition 5.1]{Po} has shown that the space of smooth vectors $L^2(H\times_\nu E)^\infty$ in $L^2(H\times_\nu E)$ is the subspace of smooth functions $f$ so that $L_D (f)$ is square integrable for every $D \in U(\h).$ Further, Poulsen showed point  evaluation from $L^2(H\times_\nu E)^\infty$ into $E$ is a continuous linear map. Whence, the the following composition gives a smooth map  $ (h,x) \mapsto h^{-1}x \mapsto T^\star (v \mapsto K_\lambda (v, h^{-1}x)^\star w) (e).$ Now $a)$ and the equality $h_T(e,h^{-1}x)=h_T(h,x)$ concludes the proof of  $c)$.\\ $d)$ follows from   Sobolev's inequality     $ \Vert f(e)  \Vert_E \leq \sum_{1 \leq b \leq N} \Vert L_{X_b} (f) \Vert_{L^2(H)}$, as shown by Poulsen,  \cite[Lemma 5.1]{Po},  applied to $f(h):=K_T(h,x)^\star w$ at $h=e$. In fact, owing to c),   $K_T(\cdot,x)^\star w$ is an smooth vector and we have $L_{X_b}^{(1)} K_T(h, x)^\star w=L_{X_b} (T^\star (K_\lambda(\cdot,x)^\star w)(h) =T^\star (L_{X_b}^{(1)} K_\lambda (\cdot ,x)^\star w)(h)$. Thus,  $ \Vert L_{X_b}^{(1)} K_T(\cdot, x)^\star w \Vert_{L^2(H)} \leq \Vert T^\star \Vert \,\Vert L_{X_b} K_\lambda (\cdot ,x)^\star w\Vert_{L^2(G)} .$
	
The equality $K_\lambda (y,x)=K_\lambda (x^{-1}y,e)$ together with the left invariance of Haar measure yields $\Vert L_{X_b} K_\lambda (\cdot ,x)^\star w\Vert_{L^2(G)}=\Vert L_{Ad(x^{-1})X_b } K_\lambda (\cdot ,e)^\star w\Vert_{L^2(G)}.$ Next, we fix a basis $\{Y_a \}_{1\leq a \leq M}$ for the subspace of elements of $U(\g)$ of degree less or equal to $\dim \h.$   We  write $Ad(x^{-1} )X_b= \sum_a \phi_{a,b}(x)Y_a$.
	  Let $C $ denote an upper bound for the numbers $ \Vert L_{Y_a} K_\lambda (\cdot ,e)^\star \Vert_{L^2(G)} $ for $1 \leq a \leq M.$  Then, Sobolev's inequality applied to $K_T(\cdot ,x)^\star w$ at $h=e$, together with the previous inequalities gives $ \Vert K_T(e,x)^\star w \Vert_W   \leq \Vert T^\star \Vert \sum_{1\leq b \leq N} \Vert L_{X_b} K_\lambda (\cdot ,x)^\star w\Vert_{L^2(G) } \\ = \Vert T^\star \Vert \sum_b   \Vert L_{Ad(x^{-1})X_b} K_\lambda (\cdot ,e)^\star w\Vert_{L^2(G)} \leq C \Vert T^\star \Vert   \Vert w \Vert_W \sum_{a,b} \vert  \phi_{a,b}(x)\vert , $  and \\  $\Vert K_T(e,x)^\star \Vert_{Hom(W,Z)}    =\sup_{\vert w \Vert \leq 1}\{\Vert K_T(e,x)^\star w\Vert_W\}   \leq  C \Vert T^\star \Vert    \sum_{a,b} \vert  \phi_{a,b}(x)\vert . $ Thus, $d)$ follows.\\   Now, we verify $e)$. That is,  the function   $x \mapsto \Vert K_T(\cdot,x)^\star \Vert_{L^2(H)} $ is  bounded, for this,   we verify the  inequality  \\ \phantom{xxxxxxxxx} $  \Vert K_T(\cdot,x)^\star \Vert_{L^2(H)} <<  \Vert K_\lambda (\cdot,e) \Vert_{L^2(G)} \Vert\, \Vert T \Vert $.\\
	To begin with, for $x \in G, w \in W$,   we have\\ \phantom{xxxxxxx} $   \Vert   K_T(\cdot,x)^\star w \Vert_{L^2(H)} \leq  \Vert w\Vert \Vert K_\lambda  (\cdot,e) \Vert_{L^2(G)} \Vert\, \Vert T \Vert $. Indeed,
	\begin{alignat*}{2}
	\int_H \Vert   K_T(h,x)^\star w\Vert_Z^2 dh    &  =\int_H ( K_T(h,x)^\star w,  K_T(h,x)^\star w)_E dh \\
	& \, = (T( K_T(\cdot,x)^\star w)(x),w)_W \\  & =\int_G (T( K_T(\cdot,x)^\star w)(y), K_\lambda(y,x)^\star w)_W dy \\
	& \leq \Vert T(  (K_T(\cdot , x)^\star w)\Vert_{L^2(G)} (\int_G \Vert K_\lambda (y,x)^\star w\Vert_W^2 dy )^{1/2} \\
	& \, \leq \Vert w \Vert \, \Vert K_\lambda(\cdot ,e) \Vert_{L^2(G)} \Vert T( K_T(\cdot,x)^\star w) \Vert_{L^2(G)} \\
	& \, \leq \Vert w \Vert \, \Vert K_\lambda (\cdot,e) \Vert_{L^2(G)} \Vert\, \Vert T \Vert \, \Vert  K_T(\cdot,x)^\star w \Vert_{L^2(H)}.
	\end{alignat*}
	Therefore,   after we simplify, we obtain
	$$ \int_H \Vert K_T(h,x)^\star w\Vert_Z^2 dh\leq \Vert w \Vert \, \Vert K_\lambda(\cdot,e) \Vert_{L^2(G)} \Vert\, \Vert T \Vert \, \Vert K_T(\cdot,x)^\star w \Vert_{L^2(H)}. $$
	Thus, $$ \Vert K_T(\cdot,x)^\star w \Vert_{L^2(H)} \leq \Vert w \Vert \, \Vert K_\lambda(\cdot,e) \Vert_{L^2(G \times_{\tau^\star \otimes \tau} Hom_\C (W,W))} \, \Vert T \Vert. $$
	To continue, we fix an orthonormal basis $\{z_j\}$ (resp. $\{w_i\}$) for $E$ (resp. for $W$). Then, there exists a constant $C_1=C_1(E,W)$,  so that for each linear  map  $ R : W\rightarrow E$ we have,  $\Vert R \Vert_{Hom(W,E)} \leq C_1 \sqrt{ \sum_{i,j} \vert(R w_i,z_j)_E \vert^2}.$ Therefore,
	\begin{multline*}  \big (\Vert K_T(\cdot ,x)^\star \Vert_{L^2(H\times_{\tau^\star \otimes \nu} Hom_\C (W,E)}\big )^2 \\ \leq C_1^2 \int_H \sum_{i,j}\vert (K_T(h,x)^\star w_i, z_j)_E \vert^2 dh  \leq C_1^2 \sum_{i} \dim E \int_H  \Vert K_T(h,x)^\star w_i\Vert_E^2 dh \\ \leq C_1^2 \dim E \dim W  \Vert K_\lambda(\cdot, e)^\star \Vert_{L^2(G)}^2 \Vert T \Vert^2 .   \end{multline*} Whence, for every $x \in G$, we obtain  \begin{multline*} \Vert K_T(\cdot,x)^\star \Vert_{L^2(H\times_{\tau^\star \otimes \nu} Hom_\C (W,E))}\\ \leq C_1 \sqrt{\dim E \dim W}  \Vert K_\lambda (\cdot,e) \Vert_{L^2(G\times_{\cdot} Hom(W,W))} \Vert\, \Vert T \Vert. \end{multline*} This concludes the proof of Proposition~\ref{prop:propertieskt}.
\end{proof}

\begin{proof}[Proof of Theorem~\ref{prop:symbrea} ]
	In order to show $a)$, we recall that Schur's Lemma implies that for a closed $H-$irreducible subspace $N$ of $\,H^2(G,\tau)$ either $S (N)=\{0\}$ or $S(N)$ is a  closed $H-$irreducible subspace. Thus, our hypothesis forces the image of $S$ is equal to a sum of $H-$irreducible  subspaces of  $L^2(H\times_\nu E)$. Plancherel's Theorem of Harish-Chandra shows that closure of the sum of the $H-$irreducible subspaces in $L^2(H\times_\nu E)$, is equal to a finite sum of irreducible subspaces; this follows from Frobenius reciprocity and \cite[Lemma  72]{HC2}. From now on, $L^2(H\times_\nu E)_{disc}$ denotes the sum of $H-$invariant irreducible subspaces in $L^2(H\times_\nu E)$. On the smooth vectors of a $H-$irreducible subspace $N$ the Casimir operator acts by a constant since smooth vectors are smooth functions, the Casimir operator acts by the same constant on the whole of $N.$ In \cite{At}, we find a proof that a $L^2-$eigenspace of an elliptic operator on a fiber bundle is a reproducing kernel subspace. Finally, the Casimir operator of $\h$ acts as an elliptic operator on $L^2(H\times_\nu E)$. Thus, we conclude that  $L^2(H\times_\nu E)_{disc}$ is a finite sum of reproducing kernel subspaces;  hence,  $L^2(H\times_\nu E)_{disc}$ is a reproducing kernel subspace.  Whence, the image of $S$ is contained in a reproducing kernel subspace, which lets us conclude $S$ is a kernel map. It readily follows  kernel $K_S$ of $S$ is equal to  $K_{S^\star}^\star. $
	
	Subsequently, we show $b)$. The hypothesis  $S$ is a continuous $H-$map and Proposition~\ref{prop:propertieskt} yields that $S^\star$ is an integral map. Thus, at least formally, we think   $S$ as the adjoint of an integral map. For this, we  formally   define $$ S_0(f)(h):= \int_G K_{S^\star}(h,x)^\star f(x) dx$$ and we consider  the subspace $$ \mathcal D_{K_{S^\star}^\star} := \{ f \in L^2(G \times_\tau W) : S_0(f) \in L^2(H \times_\nu  E) \}.$$
	It readily follows that $S_0$ restricted to $\mathcal D_{K_{S^\star}^\star}$ is an integral operator with kernel $K_{S_0}(x,h)=K_{S^\star}(h,x)^\star. $ Besides, $S$ is an integral map when restricted to $\,H^2(G,\tau)\cap \mathcal D_{K_{S^\star}^\star}$. To follow, we  construct a  subspace $\mathcal D$ of $\mathcal D_{K_{S^\star}^\star}  ,$  $$\mathcal D:=\{ f \in L^2(G\times_\tau W) : \int_G \Vert K_{S^\star}(\cdot,x)^\star  \Vert_{L^2(H\times_\nu Hom(W,E))} \Vert f(x)\Vert dx <\infty \}  $$
	For this,  we show that for $f \in \mathcal D$, the  integral that formally  defines the function $h \mapsto S_0 (h)  $ is absolutely convergent almost everywhere in $h$  and the resulting function belongs to $L^2(H\times_\nu E).$  We apply the integral version of Minkowski's inequality for $p=2$ and we obtain
	\begin{multline*}
	(\int_H   [\int_G \Vert K_{S^\star}(h,x)^\star f(x)\Vert_E dx ]^2 dh)^{1/2} \\ \leq \int_G   [\int_H \Vert K_{S^\star}(h,x)^\star f(x)\Vert_E^2  dh]^{1/2} dx \\ \leq \int_G   [\int_H \Vert K_{S^\star}(h,x)^\star \Vert_{Hom(W,E)}^2 dh]^{1/2} \Vert f(x)\Vert_W  dx.
	\end{multline*}
	The right hand side of the previous inequality is a finite number because $f$ belongs to $\mathcal D.$ Hence,  $\int_G K_{S^\star}(h,x)^\star f(x) dx $ is absolutely convergent almost everywhere in $h$ and the resulting function belongs to $ L^2(H\times_\nu E).$

	As a consequence, we obtain that for $g \in L^2(H\times_\nu E)$ and  $ f \in \mathcal D$
	the following two iterated integrals are absolutely convergent   $$ \int_G \int_H( K_{S^\star}(h,x) g(h), f(x))_W dh dx = \int_H \int_G( K_{S^\star}(h,x) g(h), f(x))_W dx dh .$$
	We are ready to conclude the proof of Theorem~\ref{prop:symbrea} b).
	Whenever $\pi_\lambda$ is an  integrable discrete series representation, it follows from the work of Harish-Chandra, \cite[Lemma 76]{HC1}, that  any $K-$finite vector $f$ in $\,H^2(G,\tau)$ is integrable with respect to Haar measure on $G$. We claim that any smooth vector in $\,H^2(G,\tau)$ is an integrable function. For this we recall the space of rapidly decreasing functions $\mathscr S(G)$ on $G$, defined by \cite[page 230]{Wa1}.  Owing to \cite[ Lemma 2.A.2.4]{Wa1},  any  rapidly decreasing function is integrable. Next, in \cite[Theorem 11.8.2]{Wa2},   it is shown that the subspace $\,H^2(G,\tau)^\infty$ of smooth vectors in $\,H^2(G,\tau)$ is an algebraically irreducible module over $\mathscr S(G)$. Therefore, any smooth vector in $\,H^2(G,\tau)$ is equal to the convolution of a rapidly decreasing function on $G$ times a $K -$finite vectors. Hence, any smooth vector is convolution of two integrable functions on G. Classical harmonic analysis yields any smooth vector in $\,H^2(G,\tau)$ is an integrable function.     Therefore, for  a smooth vector  $f$ in $\,H^2(G,\tau)$,  Proposition~\ref{prop:propertieskt} $e)$, forces  the integral   $$ \int_G \Vert K_{S^\star}(\cdot,x)^\star \Vert_{L^2(H\times_{\tau^\star \otimes \nu} Hom(W,E))} \, \Vert f(x) \Vert_W dx$$ is absolutely convergent. Hence, $\mathcal D$ contains the smooth vectors in $H^2(G,\tau)$, and   we have verified $b)$. Thus, we have shown Theorem~\ref{prop:symbrea}.  \end{proof}

  \begin{examp}\label{examp:sl2}  The following example shows that statement in Theorem~\ref{prop:symbrea} b) might not be sharp. Details for some of  the statements in this example are found in \cite{DaOZ} and references therein.  We set $G=SL_2(\mathbb R)$ and $H$  equal to the subgroup of  diagonal matrices in $G.$ We fix as compact Cartan subgroup $T$ the subgroup of orthogonal matrices in $G.$ We fix as positive root $\alpha$ one of the roots in $\Phi(\g,\t)$ and $\rho :=\frac12 \alpha.$ Then, the set of Harish-Chandra parameters for $G$ is $\{ n\rho, n \in \mathbb Z \backslash \{ 0\} \}.$ The lowest $K-$type for $\pi_{p\rho}^G$ ($p\geq 1$) is $(\pi_{(p+1)\rho}^K, \C).$ After we identify $G/K$ with the upper half plane $\mathcal H_+$ and we trivialize the vector bundle $G \times_{\pi_{(p+1)\rho}^K} \mathbb C \rightarrow G/K$ we have that $H^2(G,\pi_{(p+1)\rho}^K)$ can be identified with the space $$ \{ f : \mathcal H_+ \rightarrow \C : f \text{ holomorphic \, and } \, \int_{\mathcal H_+} \vert f(x+iy) \vert^2 y^{p+1} \frac{dx dy }{y^2} < \infty \}. $$ The bundle $H \times_{\pi_{(p+1)\rho}^K} \mathbb C \rightarrow H/L$ is also trivial and $L^2(H \times_{ res_L(\tau)} W)$ is identified with $L^2(i\mathbb R_{>0}, t^p dt).$ The restriction map $r : \,H^2(G,\tau) \rightarrow L^2(H \times_{ res_L(\tau)} W)$ becomes $r(f)(iy)=f(iy), y \in \mathbb R_{>0}.$ After we write the polar decomposition $r^\star =VQ$ we have $$ V(g)(z) =\int_0^\infty e^{izt} g(it) t^p dt, \,\, z \in \mathcal H_+, \,\, g \in L^2(H \times_{ res_L(\tau)} W).$$ Owing to Theorem~\ref{prop:symbrea}, for an integrable discrete series, equivalently  $\vert p \vert \geq 2,$ the linear map $
V^\star$ restricted to the subspace of smooth vectors in \\ $H^2(G, \pi_{(p+1)\rho}^K)$ is equal to   the integral linear map $$ V^\star (f)(it)=\int_{\mathcal H_+} f(x+iy) e^{-i\bar z t} y^{p-1} dx dy. \eqno{(\ddag)}$$
For the non-integrable discrete series  $H^2(G, \pi_{2\rho}^K ) $ the Lebesgue integral on the right of $(\ddag)$ does exists for any $K-$finite vector. In fact, \begin{multline*} \int_0^\infty \int_{-\infty}^\infty \left\rvert \frac{z-i}{z+i}\right\rvert^n \frac{1}{\vert z+i \vert^2 } \, e^{-yt}\, dx  dy \\ \leq \int_0^\infty \int_{-\infty}^\infty \frac{e^{-yt}}{x^2+(y+1)^2} dx dy <+\infty \,\, for \, \, t > 0, n \geq 0.\end{multline*} Whence, the following integral converges absolutely,
\begin{multline*} \int_{\mathcal H_+} \biggl( \frac{z-i}{z+i}\biggr)^n \frac{1}{ (z+i)^2 }\, e^{-i\bar{z}t}\,  dx dy \\ =\int_0^\infty \int_{-\infty}^\infty \frac{e^{-i(x-iy)t}}{(x+iy+i)^2} dx dy =c_n e^{-t}(e^{2t} (2t)^{-1}\frac{d^n  }{dt^n} (e^{-2t}(2t)^{n+1}) .
\end{multline*}  The inner integral is computed by means of Cauchy integral formula applied to compute the first derivative of $z \mapsto e^{-i(z-iy)t} $ at the point $-(iy+i)$. The path of integration is the half circumference of diameter $[-c,c]$ with $c$ positive and very large  so that $-(iy+i)$ is an interior point of the path.   We now have that for each $n\geq 0$, the resulting function defined by the right hand side of $(\ddag)$  belongs to $L^2(i\mathbb R_{>0}, tdt)$. Next,  the set of functions $( \frac{z-i}{z+i})^n \frac{1}{ (z+i)^2 }, n\geq 0$ span the subspace of $K-$finite vectors.  Therefore, functional analysis implies the right hand of $ (\ddag)$   evaluated in each of this set of generators  of $(H^2(SL_2(\mathbb R), \pi_{2\rho}^K ))_{K-fin} $ is equal to $V^\star$ evaluated at each of these generators.  This concludes the verification that $V^\star$ restricted to the subspace of $K-$finite vectors is an integral map. \end{examp}

 \section{Other model to realize  discrete series representations}
 We refer to the article \cite{Hi} for this section. Let $d_\star m$ denote a fixed  $G-$invariant Radon measure on $G/K$, after we normalize Haar measure on $K$ so that $K$ has volume one, and normalize Haar measure on $G$,    we have the equality $$\int_G f(x) dx =\int_{G/K} \int_K f(xk)dk\, d_\star m(xK).$$
 Owing to the Iwasawa  decomposition for $G=ANK$ the principal bundle $K \rightarrow G \rightarrow G/K $ is equivalent to the trivial bundle. Therefore, for each representation $\tau : K \rightarrow U(W)$, the vector bundle $ G\times_\tau W \rightarrow G/K$ is  parallelizable. We fix a   section $\sigma$ for $G \rightarrow G/K$ so that $\sigma (eK)=1,$ we  obtain a cocycle $c : G \times G/K \rightarrow Gl(W), \text{by}\, c(g,x)= \tau(\sigma (g\cdot x)^{-1} g\sigma(x)) $. That is, $c$ is a continuous map which satisfies $$ c(gh,x)=c(g,h \cdot x) c(h,x), g, h \in G, x \in G/K; \,\,c(k,eK)=\tau(k). $$
 We now recall how the above datum gives rise a unitary representation of $G$ equivalent to $  L^2(G\times_\tau W).$  Let $o=eK$. We consider the inner product on $W-$valued functions  on $G/K$ defined by $$ <F,H>_c= \int_{G/K} ((c(g,o) c(g,o)^\star )^{-1} F(z), H(z))_W \, d_\star m, \,\, z=g\cdot o.$$ The group $G$ acts unitarily on the corresponding space   of  square integrable functions $L_{c,\tau}^2(G/K, W)$ by the formula $$ g\cdot f(x)= c(g^{-1},x)^{-1} f(g^{-1} x).$$  We claim the following map determines an unitary  equivalence $$ L^2(G\times_\tau W) \ni f \mapsto F \in L_{c,\tau}^2(G/K,W) \, \text{where} \, F(g\cdot o):=c(g,o) f(g).$$ The inverse map is $$ L_{c,\tau}^2(G/K, W) \ni F \mapsto f \in L^2(G,\tau) :  f(g)= c(g,o)^{-1} F(g.o).$$
 The unitary equivalence is due to that for $F,H \in  L_{c,\tau}^2(G/K,W)$ the following  equality holds
 \begin{multline*}  \hskip 0.7cm \int_G ( f(g), h(g))_W dg  \\ =\int_{G/K} ([c(g,o) c(g,o)^\star ]^{-1} F(z),H(z))_W d_\star m(z)  .
 \end{multline*}

 For a convenient function $\tilde K : G/K \times G/K \rightarrow End_\C (W),$ we  consider the integral operator  $$ L_{c,\tau}^2(G/K,W) \ni F \mapsto ( G/K \ni z \mapsto \int_{G/K} \tilde K(w,z) F(w) d_\star m(w) \in W)$$ and  we define for $g,h \in G,  \tilde k(g,h):= c(h ,o)^{-1} \tilde K(g\cdot o,h\cdot o) c(  g,o), $ then, we have the commutative diagram

 $$\begin{array}{ccc}
 f & \rightarrow  & F \\
 \downarrow & \vdots &  \downarrow  \\
 \int \tilde k(x,y) f(x) dx  & \rightarrow  & \int \tilde K(w,z) F(w) d_\star m(w)
 \end{array} $$

 The kernel $\tilde k$ defines an intertwining linear operator for the left action of $G$ on $L^2(G\times_\tau W)$ if and only if $\tilde k$ is an invariant kernel, that is, $\tilde k(ga,gb)=\tilde k(a,b), a,b, g \in G$ and $\tilde k( gk,hk_1)=\tau(k_1^{-1}) \tilde k(g,h) \tau(k), g,h \in G, k, k_1 \in K. $  To an invariant kernel $\tilde k,$ the corresponding kernel $\tilde K$ satisfies  $$ \tilde K(t \cdot z, t \cdot w)= c(t,w)\tilde K(z,w) c(t,z)^{-1},  z,w \in G/K, t \in G. $$
 Next we consider the kernels $K_1,K_2$ which defines respective  intertwining linear operators on $L_{c,\tau}^2(G/K,W).$ Then readily follows the  equality
 \begin{multline*}K_1 (g \cdot x,g \cdot y) K_2 (g \cdot x,g \cdot y)^{-1} \\ =c(g,x) K_1(x,y) K_2(x,y)^{-1} c(g,x)^{-1}, x,y \in G/K, g \in G.
 \end{multline*}
 In particular, whenever $W$ is unidimensional, the kernel $K_1$ is equal to $K_2$ times a $G-$invariant function.

 For  each  Iwasawa decomposition $G=ANK $    a section of the principal bundle $G \rightarrow G/K$ is  $\sigma (anK)=an.$ Therefore, from the preceding considerations we obtain a unitary equivalence between $ L^2(G\times_\tau W) $ and $L_{c,\tau}^2(G/K,W)$ by mean of the cocycle $c$ associated to $\sigma$ and $\tau.$ When $G/K$ is a Hermitian symmetric space,  there is  another section $\sigma_+$  by means of the subgroups $P_-,P_+ , K_\C$ of the complex Lie group $G_\C$.  We have that $ G \subset P_+ K_\C P_- $ and $G/K$ is realized as a bounded domain in  $\p_+.$ Every $g \in G$ is uniquely written as $g=y_+(g)\mu(g)y_-(g)$, with $y_\pm(g) \in P_\pm,\, \mu(g) \in K_\C.$   Next, we fix $( \tau,W) $   irreducible representations of $K$ so that it is the lowest $K-$type of a holomorphic discrete series $\,H^2(G,\tau)$. By means of $\sigma_+$ and the representations $\tau$ we define $c_{+,\tau}=\tau (\mu(g))$ and we have that  the maps  $f \mapsto F=c_{+,\tau}(g,o) f(g)$  carries $\,H^2(G,\tau)$ onto the subspace  of holomorphic functions in $L_{c_{+,\tau}}^2(G/K, W)$.  Based on this model for the holomorphic discrete series,
 many authors have contributed to the study of   branching problems and harmonic analysis. Other authors have chosen different sections of  the principal bundle $G\rightarrow G/K$. Their choice, allowed them to analyze other discrete series,  branching problems, harmonic analysis. It is out of our knowledge to explicit all the work is done on the subject. We would like to call the attention of work, on holomorphic discrete series, of Jacobsen-Vergne, T. Kobayashi with his collaborators and the work of G.  Zhang on quaternionic discrete series \cite{LZ}.

 We would like to point out that after we fix respective smooth section for the principal bundle $G\rightarrow G/K$, (resp.  $H \rightarrow H/L$), we may translate the reproducing kernel $K_\lambda$ (resp. $K_\mu$) to a reproducing kernel $\tilde K_\lambda$  (resp. $\tilde K_\mu$) in $L_{c,\tau}^2 (G/K,W)$ (resp. $L_{c,\sigma}^2 (H/L, Z)$), the eigenspaces  of the Casimir operator to eigenspaces of the Casimir operator, and as we already indicated   kernel maps goes to kernel maps, it is a simple matter to verify differential operators correspond to differential operators and so on. We may say that each  statement  in this paper  has a correlative  in the language of the   spaces $L_{c,\tau}^2 $. For example, a function that corresponds to an element of $\,H^2(G,\tau) $ growth at most as the function $\Vert \tau (c(g,o)) \Vert. $ Differential operators intertwining $\,H^2(G,\tau)$ with $H^2(H,\sigma)$ corresponds to differential operators between the corresponding spaces.

\subsection{An example of discrete decomposition} \label{subsec:un1} We consider the groups
$G=U(1,n), H=U(1,n-1)\times U(1)$, $K=U(1)\times U(n),$ $ L=U(1)\times U(n-1) \times U(1)$ and we fix an integer $ \alpha $ positive and large. Then, $\tau_\alpha (k)=det (k)^\alpha, k \in K$ is a character of $K.$ To follow, we write the  decomposition of $\,H^2(G,\tau_\alpha)$ as an $H-$representation    and  we compute reproducing kernels, immersions, projections, etc.  Let $\mathcal D_n =\{ z \in \C^n, \vert z \vert <1\}$ denote the unit  ball in $\C^n.$   The group $G$ acts   means of fractional transformations, the action is transitive on   $\mathcal D_n$ and the isotropy subgroup at the origin of $\C^n$ is $K.$ We fix a Lebesgue measure $dm_n$ on $\C^n$ and define the measure  $d\mu_{\alpha} = (1-\vert z \vert^2)^{\alpha-(n+1)} dm_n. $
  Then, in \cite{DOZ} we find an explicit  isomorphism between the Hilbert spaces $H^2(G,\tau_\alpha) $ and   $V_{n,\alpha}:=\mathcal O (\mathcal D_n) \cap L^2 (\mathcal D_n,  d\mu_\alpha) .$  The unitary representation  of $G$    on $V_{n,\alpha}$ is by means of the action: $$ \pi_\alpha(g) f(z) = \tau_\alpha( J(g^{-1},z))^{-1} f(g^{-1} z),\,\, g \in G, z \in \mathcal D_n, f \in V_{n,\alpha}.$$
 For $ f\in \mathcal O(\mathcal D_n)$ we write the   convergent power series in $\mathcal D_n,$ $$f=f_0(z_1,\cdots,z_{n-1})+ f_1(z_1,\cdots,z_{n-1}) z_n+ f_2(z_1,\cdots,z_{n-1}) z_n^2 + \cdots .$$
For an integer $ m\geq 0$, we consider the linear subspace $$\tilde{\mathcal H}_m =\{ f \in \mathcal O(\mathcal D_n): \frac{\partial^p f}{\partial z_n^p}\vert_{\mathcal D_{n-1}} =0, \text{for} \,\,0\leq p \leq m-1 \}.$$ Then, $\mathcal H_m := \tilde{\mathcal H}_m \cap V_{n,\alpha}$ is a closed subspace in $V_{n,\alpha}.$  We denote $V_m$ for the orthogonal complement of $\mathcal H_{m+1} $ in $\mathcal H_m.$ Thus, a typical element of $\mathcal H_m$ (resp. $V_m$) is $$ f= f_m(z_1,\cdots z_{n-1})z_{n }^m + \, \cdots, \,\, (\text{resp.}\,\, f_m(z_1,\cdots z_{n-1})z_{n}^m ).$$ It readily follows that the action of $H$ on the a polynomial (resp. holomorphic function) in $z_1,\cdots,z_{n-1}$ (resp. in $z_n$) is again a polynomial (holomorphic function) in the same variables.  Whence, the subspaces $V_m$ are invariant for the action of $H.$ Therefore, we have the orthogonal decomposition $V_{n,\alpha} =V_0 +V_1 + V_2 +V_3 + \cdots $ \\
  The orthogonal projector $P_{\alpha,m}$  onto $V_m$ is given by

\begin{multline*}
  P_{\alpha,m}(f=f_0 +\cdots +f_k z_n^k +\cdots)(z)= f_m(z_1,\cdots,z_{n-1})z_n^m \\ =z_n^m \frac{\partial^m f}{\partial z_n^m} (z_1,\cdots,z_{n-1},0)  = \int_{\mathcal D_n} K_{\alpha,m}(w,z) f(w) d\mu_\alpha (w).
\end{multline*}
Here, $K_{\alpha,m}$ is the reproducing kernel for the subspace $V_m.$ Hence, $K_\lambda =\sum_{m\geq 0} K_{\lambda,m}.$ Let's write $z^\prime, w^\prime$ for vectors in $\C^{n-1}.$ Then, up to a constant \begin{multline*} K_\lambda (w,z)= \frac{1}{(1-(\tr w^\prime)^\star z^\prime- \bar w_n z_n)^\alpha} \\=\sum_{m\geq 0} \binom{-\alpha}{m} \frac{1}{(1-(\tr w^\prime)^\star z^\prime)^{\alpha +m} } (\bar w_n z_n)^m \end{multline*} Hence, the $m-$th summand is equal to $K_{\alpha,m}.$ \\ The representation of $H$ on $V_m$ is equivalent to $H^2(H,\tau_{\alpha +m}).$  An equivariant map $T_m$ from $H^2(H,\tau_{\alpha +m}) \equiv V_{n-1, \alpha +m}$ onto $V_m$ is given by $$ g(z_1, \cdots,z_{n-1})
 \mapsto g(z_1, \cdots,z_{n-1}) z_n^m.$$ A expression for $T_m$ as integral map is $$  T_m (g)(z^\prime, z_n)=\int_{\mathcal D_{n-1}} \frac{1}{(1-(\tr w^\prime)^\star z^\prime))^{\alpha +m}}\, z_n^m g(w_1,\cdots,w_{n-1}) d\mu_{\alpha +m}$$ This is due to that $\frac{1}{(1-(\tr w^\prime)^\star z^\prime))^{\alpha +m}}$ is the reproducing kernel for $V_{n-1,\alpha +m}.$  Finally, an intertwining map $S : V_{n,\alpha} \rightarrow H^2(H,\tau_{\alpha +m})$ is $$ S(f)(z_1,\dots,z_{n-1})=\frac{\partial^m f}{\partial z_n^m} (z_1,\dots,z_{n-1},0).$$

\section{Appendix}\label{sec:Apendix}
\subsection{Kernel linear maps}\label{subsec:A1} In this subsection we recall definitions and  facts on linear maps defined by kernels.
\begin{dfn} For   two measure spaces $(Y,\mu), (X,\nu)$, a linear transformation $T$ from $L^2(Y)$ into $L^2(X)$ is called an  {\it integral map, kernel map or an integral operator},  if there exists a function $K_T : Y\times X \rightarrow \C$ so that, for almost everywhere in  $x \in X$,
the function $y \mapsto K_T(y,x) g(y)$ belongs to $L^1(Y)$ for each function $g$ in the domain of $T$, and the equality   $Tg(x)=\int_Y K_T(y,x)g(y) dy$ holds  almost everywhere in $x \in X.$ \end{dfn} In an obvious way, the definition generalizes to linear maps between vector bundles. For a measurable kernel $k : Y\times X \rightarrow \C$, the domain $\mathcal D_{k} $ of the linear map defined by $k$ is is the set of $ f \in L^2(Y)$ so that $ \int_Y k(y,x)f(y) d\mu(y)$ converges for almost every $x \in X$ and the resulting function is square integrable with respect to $\nu.$  A kernel $k$ is {\it  a Carleman kernel}  if the function $y \mapsto k(y,x)$ is square integrable for almost every $x \in X$. An integral is a {\it Carleman map} if it is realized via a Carleman kernel. For a Carleman kernel $k$, the linear map defined on $\mathcal D_k$ is a closed linear map.  As usual,  $F^\star$ denotes the adjoint of a linear map $F.$ Formally, the adjoint of $T$ is an integral operator with kernel $K_{T^\star}(x,y):=K_T(y,x)^\star,$ however, even though when $T$ is  continuous, the linear map $T^\star$ might not be equal to an integral operator on the whole dual space, as example Appendix~\ref{subsec:A3} shows.
\subsection{Adjoint  to a  kernel linear map}\label{subsec:A2}   For an integral map $T$, if the adjoint linear map is an integral map of kernel $  K_{T^\star},$ then $ K_{T^\star} (x,y)=K_T(y,x)^\star.$
\subsection{Example}\label{subsec:A3}   For an example of an integral map whose adjoint is not an integral map, we consider  $X=\Z$ with discrete topology and usual Haar measure and $Y=S^1=\mathbb C/2\pi i\mathbb Z$ with Haar measure.  Point evaluation from $\ell^2(\Z)$ into $\C$ is continuous because $\vert a_n \vert \leq \Vert (a_k)_k \Vert_2. $ Thus, any continuous linear map from $L^2(S^1)$ into $\ell^2(\Z)$ is a Carleman kernel map. Next, we consider  $ T : L^2(S^1) \rightarrow \ell^2(\Z)$       given by the kernel $k_T(z,n)=\bar z^n$ (Fourier transform).  To follow, we verify $T^\star$ restricted to the subspace of absolutely convergent series is an integral map, and,   $T^\star$ is not an integral map.  It readily follows that the adjoint of $T$ is the linear map \\ \phantom{xxxxxxxxxxxx} $ \ell^2 (\Z) \ni (f_n)_n \mapsto \sum_n f_n z^n \in L^2(S^1).$  \\  We  notice  that the adjoint kernel $ k_{T}^\star (n,z)=z^n$ defines an integral map $ (T^\star)_0 : \ell^2(\Z)\rightarrow L^2(S^1)$  with domain the linear subspace  \\ \phantom{xxxxxxxxxxxxxx} $ \mathcal D_{k_T^\star} =\{(a_n)_n :\,\,\, \sum_n \vert a_n \vert < \infty \}.$ \\ In fact,  Lebesgue integral is absolutely convergent, whence $f$ belongs to $\mathcal D_{K_T^\star}$ iff  $\int_X \vert  k_{T }^\star(x,y) f(x)\vert dx <\infty \, \forall y  \in Y,$ and the resulting function belongs to $L^2(S^1)$. In our case, $\int_X \cdots =\int_\Z \vert f(n) K_T^\star (n,z)\vert  dn  =\sum_n \vert f_n \vert.$ Thus, the domain of $K_T^\star$ is as we have claimed.    Now, if $T^\star$ where an integral operator, Appendix~\ref{subsec:A2}, would give  the kernel that should represent  $T^\star$ ought to be $z^n$, hence,  $\mathcal D_{K_T^\star}$ would  be equal to $\ell^2(\Z).$ A contradiction.
\subsection{Reproducing kernel subspace}\label{subsec:A4} For the purpose of this paper, a closed subspace $V$ of $L^2(G\times_\tau W) $ is a {\it reproducing kernel subspace}, if $V$ consists of smooth functions and evaluation at each $x \in G$ is a continuous linear map.  Hence, for each $w \in W, x \in G $ the linear functional on $V$ $$ f \mapsto (e_x (f), w)$$ is represented by a function $k_x (\cdot)^\star(w) \in V.$ Thus, the function $y \mapsto k_x (y)^\star(w)$ is square integrable,   smooth and the following equality    hold: $$ (f(x), w)_W = (f, k_x(\cdot)^\star(w))_V, \,\, x \in G, \, w \in W, \, f \in V. \,\,\, $$ The function $$ W \ni w \mapsto k_x(y)^\star(w) \in W$$ is linear.    We define $K_V : G \times G \rightarrow End_\mathbb C(W)$ to be $ K_V (y,x) (w) =k_x(y) (w).$ Since the product of two square integrable functions gives an integrable function, we have, for $ f \in L^2( G\times_\tau W) $ the integral below is absolutely convergent  $$ P_V (f)(x):=\int_G K_V (y,x) f(y) dy \,\,x \in G  \eqno{(A-4.1)}$$   In \cite{Hi}, \cite{OO1}, \cite{OO2} we find a proof that the map $f \mapsto  P_V (f)$  is the orthogonal projector    onto $V.$ Let $j_W : W \rightarrow W^\star$ the conjugate linear map determined by the inner product $(..., ...)_W$. For each    orthonormal basis $\{f_j, j=1,2,\dots \} $   for $V.$ Then, in \cite{At} we find a proof of  $$K_V (y, x) = \sum_{r\geq 1} j_{W} (f_r(y)) \otimes f_r (x)= \sum_{r\geq 1} j_{V}(f_r)(y) \otimes f_r (x) \eqno{(A-4.2)}$$
We analyze the   convergence of the series (A-4.2) in Appendix~\ref{subsec:A5}.  \\ The main examples of reproducing kernel subspaces come from the statements below, in \cite{At} we find proofs of the stated facts.  \\ It readily follows that a continuous linear map $ T$ from $  L^2(H\times_\nu E) $ into a reproducing kernel space $V$ is a Carleman map.
\subsection{$L^2$-kernel of an elliptic operator}\label{subsec:A5} Let $D$ be   elliptic operator which maps sections of  vector bundle into sections of perhaps  another vector bundle over $G/K.$ Define $Ker_2(D)$ equal to the totality of $L^2-$sections which are in the kernel of $D$ as distributions. Then, $Ker_2(D)$ is closed in $L^2$ and owing to the regularity Theorem $Ker_2(D)$ consists of smooth functions, moreover, $L^2$ convergence of a sequence, implies uniform convergence on compact sets of the sequence as well as any derivative of the sequence, hence, $Ker_2(D)$ is an example of reproducing kernel Hilbert space.   In \cite[Prop 2.4]{At}, we find a proof of:  the matrix kernel $K_{Ker_2(D)} : G \times G \rightarrow End_\mathbb C (W)$ determined by $Ker_2(D)$  is a smooth function and  the convergence of the sequence in (A-4.2), as well as any derivative,  is uniform on compact sets.   In \cite{At}, we find a proof that $tr(K_{Ker_2(D)}(x,x))=\sum_j \Vert f_j(x) \Vert^2, x \in G/K$.  Whence,  Schwarz inequality yields the convergence of (A-4.2) is absolute.  As a Corollary,  we obtain. Let   $N$   be  $L^2-$closed subspace of $Ker_2(D)$. Then, $N$ is a reproducing kernel subspace and the matrix kernel for  $N$ is   a smooth function. This is so, because the series that represents the matrix kernel for $N$ is a sub-series of the absolutely convergent series  (A-4.2). For the same reason as before, the matrix kernel for $N$ is a real analytic function.  In particular, the matrix kernel  $K_\lambda $ for $\,H^2(G,\tau)$ is a real analytic function  and the matrix kernel for any closed  subspace in $\,H^2(G,\tau)$ is  also given by a real analytic function.
\subsection{Kernel for $P_\lambda$}\label{subsec:A6} The linear operator $P_\lambda$  has two representations as an integral operator. One {\it matrix  kernel} that represents $P_\lambda$ is $ K_\lambda (y,x)=   \Phi_0(x^{-1}y)$, where, $ z\mapsto \Phi_0(z):= d(\pi_\lambda) P_W \pi_\lambda^G(z)P_W $ is the spherical function associated to the lowest $K-$type $W.$ Let $\phi_0(z)= d(\pi_\lambda) tr(\Phi_0 (z)),$ then, the {\it the trace kernel} which represents $P_\lambda$  is \begin{multline*} k_\lambda (y,x) :=\phi_0(x^{-1}y):= d(\pi_\lambda)  tr P_W(\pi_\lambda^G(x^{-1}y)P_W)\\ = d(\pi_\lambda) \sum_j   (\pi_\lambda^G(x^{-1}y)f_j, f_j)_V =tr(K_\lambda(y,x). \end{multline*} where $f_1, \cdots$  is an orthonormal basis for the lowest $K-$type $V_\lambda[W].$ Of course, there are some identifications we have avoided to explicit. For a proof of the explicit representation of  $P_\lambda$ by $K_\lambda, k_\lambda$ we refer to \cite{OO2}, \cite{WW}. A sketch of proof is presented below.  Certainly, we recover $\Phi_0$ from $\phi_0$ by the formula $\Phi_0 (z)=\int_K \tau(k) \phi_0(k^{-1} z) dk. $\\
To follow, we sketch of proof that the kernel's $K_\lambda, k_\lambda$ represent $P_\lambda$ as an integral operator.
The orthogonal projector onto $\,H^2(G,\tau)$  is represented by the matrix kernel $K_\lambda (y,x)= d(\pi_\lambda)  P_W \pi_\lambda (x^{-1}y) P_W.$  In fact,  let $f_v$ be as in Proposition~\ref{prop:klmchar}, then  \begin{multline*}\int_G (P_W \pi_\lambda (x^{-1}y) P_W) f_v(y) dy  =\sum_{i,s} \int_G  (x^{-1}y w_i, w_s)(y^{-1}v,w_i) w_s dy \\ = \sum_{i,s} \int_G  (yw_i, xw_s) \overline{(yw_i, v)} w_s dy
= \frac{1}{d(\pi_\lambda)}\sum_{i,s}(w_i,w_i) \overline{(xw_s,v)} w_s\\ = \frac{1}{d(\pi_\lambda)}\sum_s(x^{-1}v,w_s) w_s = \frac{f_v(x)}{d(\pi_\lambda)}  . \end{multline*}
A trace kernel that represents
the orthogonal projector onto  $H^2(G,\tau)$ \\ is $k_\lambda (y,x) =  d(\pi_\lambda) \phi_0 (x^{-1}y) $.  Indeed, \begin{multline*} \int_G tr(P_W \pi_\lambda (x^{-1}y) P_W ) f_v(y) dy =  \int_G \sum_{r,i} (x^{-1}y w_r,w_r) (y^{-1}v,w_i) w_i dy  \\ =\sum_{r,i}\int_G (yw_r,xw_r) \overline{(y w_i,v)} dy =\frac{1}{d(\pi_\lambda)}\sum_{r,i}  (w_r,w_i) \overline{(xw_r,v)} w_i   =\frac{f_v(x)}{d(\pi_\lambda)}  .\end{multline*}

\section{Notation}\label{sec:notation}

\noindent
- $(\tau ,W),$ $(\sigma, Z)$, $L^2(G \times_\tau W), L^2(H\times_\sigma Z)$ (cf. Section~\ref{sec:prelim}).\\
-$\,H^2(G,\tau)=V_\lambda=V_\lambda^G $, $\pi_\lambda=\pi_\lambda^G$, $H^2(H,\sigma)=V_\mu^H, \pi_\mu^H.  $ (cf. Section~\ref{sec:prelim}).\\
- $d_\lambda =d(\pi_\lambda)$ dimension of $\pi_\lambda,$  $P_\lambda, P_\mu,  K_\lambda, k_\lambda, K_\mu, k_\mu, $  (cf. Section~\ref{sec:prelim}). \\ -$\Phi_0,\phi_0$ (cf. Appendix~\ref{subsec:A6}). \\
-$M_{K-fin} (resp.  \, M^\infty) $ $K-$finite vectors in $M$ (resp. smooth vectors in $M$).\\
-$dg,dh$ Haar measures on $G$, $H$.\\
-A unitary representation is square integrable, equivalently a discrete series representation,  (resp. integrable) if some nonzero  matrix coefficient is square integrable (resp. integrable) with respect to Haar measure. \\
-$\Theta_{\pi_\mu^H}(...)$  Harish-Chandra character of the representation $\pi_\mu^H.$\\
-For a module $M$ (resp. a simple submodule $N$)over a ring, $M[N]$ denotes the {\it isotypic component} of $N$ in $M$. That is, $M[N]$ is the sum of all irreducible submodules isomorphic to $N.$ If topology is involved, we define $M[N]$ to be the closure of $M[N].$ \\
-$  M_{H-disc}$ is the closure of the linear subspace spanned by the totality of $H-$irreducible submodules. $ M_{disc}:= M_{G-disc}$\\
-A representation $M$ is $H-$discretely decomposable if $ M_{H-disc} =M.$\\
-A representation is $H-$admissible if it is $H-$discretely decomposable and each isotypic component is equal to a finite sum of $H-$irreducible representations.\\
-$K_{\lambda,\mu}$ matrix kernel for the orthogonal projector $P_{\lambda,\mu}$ onto $\,H^2(G,\tau)[V_\mu^H].$ \\
-$U(\g) $ (resp. $\z(U(\g)$) universal enveloping algebra of the Lie algebra $\g$(resp. center of universal enveloping).\\
-$\N :=\{1,2,\cdots \}.$\\

\textbf{Acknowledgements}  The authors gratefully thank   the referees for the constructive comments and  recommendations which definitely help to improve the readability and quality of the paper. \\ The  authors would like to thank T. Kobayashi for much insight  and inspiration on the problems considered here. Also, we thank Michel Duflo, Birgit Speh, Yosihiki Oshima and Jan Frahm for conversations on the subject. The second author   thanks  Aarhus University for generous support, its hospitality and  excellent   working conditions during the preparation of this paper.
\providecommand{\MR}{\relax\ifhmode\unskip\space\fi MR }
\providecommand{\MRhref}[2]{%
  \href{http://www.ams.org/mathscinet-getitem?mr=#1}{#2}
}
\providecommand{\href}[2]{#2}


\begin{thebibliography}{10}

\bibitem[At]{At} Atiyah, M.F., Elliptic operators, discrete groups and von Neumann algebras, Coloque analyse et topologie,  Ast\'{e}risque Vol.  32-33, (1976) 43-73.
\bibitem[C]{C}  Cowling, M., The Kunze-Stein phenomenon. Ann. Math. Vol. 107, (1978) 209–234.
 \bibitem[DaOZ]{DaOZ} Davidson, A., Olafsson, G., Zhang, G., Laplace and Segal-Bargmann transforms on Hermitian symmetric spaces and orthogonal polynomials.  J. Funct. Anal. Vol. 204,  (2003) 157-195.
\bibitem[DOZ]{DOZ} Dooley, A. H.,  Orsted, B., Zhang, G., Relative discrete series of line bundles over bounded symmetric domains.   Ann. Inst. Fourier (Grenoble) Vol. 46, (1996) 1011-1026.
\bibitem[DGV]{DGV} Duflo, M.,  Galina, E., Vargas, J., Discrete series of semisimple Lie groups with admissible restriction to $SL_2(\mathbb R),$
 J. Lie Theory,    Vol. 27, (2017) 1033-1056.
 \bibitem[DHV]{DHV}  Duflo, M., Heckman, G., Vergne, M., Projection d'orbites, formule de Kirillov et formule de Blattner.     Harmonic analysis on Lie groups and symmetric spaces (Kleebach, 1983). Mém. Soc. Math. France (N.S.) No. 15 (1984), 65–128.
 \bibitem[DV]{DV}  Duflo, M., Vargas,  J.,  Branching laws for square integrable representations, Proc. Japan Acad. Ser. A Math. Sci. Vol. 86, (2010) n 3, 49-54.
\bibitem[Fo]{Fo} Folland, G., Harmonic analysis in phase space, Princeton University Press (1989).
\bibitem[GW]{GW} Gross, B., Wallach, N.,
Restriction of small discrete series representations to symmetric subgroups.  The mathematical legacy of Harish-Chandra (Baltimore, MD, 1998),
Proc. Sympos. Pure Math., Vol. 68, (2000) 255 - 272.
\bibitem[HC1]{HC1} Harish-Chandra,   Representations of semisimple Lie groups VI,
Amer. J. Math.,
Vol. 78, (1956) No. 3,  564-628.

\bibitem[HC2]{HC2} Harish-Chandra, Discrete Series for semisimple Lie groups II, Acta Math. Vol. 116, (1996) 1-111.
\bibitem[HHO]{HHO}  Harris, B., He, H., \'O{}lafsson, G.,  The continuous spectrum in discrete series branching laws. Internat. J. Math.  Vol.24  (2013) no. 7, 1-29.
\bibitem[HHO2]{HHO2}
Harris, B., He, H.,'O{}lafsson, Gestur, Wave front sets of reductive
Lie group representations. Duke Math. J. Vol. 165 (2016), no. 5, 793–846.
\bibitem[He]{He}  He, H.,  On matrix valued square integrable positive definite functions. Monatsh. Math. Vol. 177, (2015) no. 3, 437-449.
\bibitem[HS]{HS} Hecht, H., Schmid, W., On integrable
representations of a semisimple Lie group. Math. Annalen Vol. 220, (1976) 147-150.
\bibitem[Hi]{Hi}Hilgert, J., Reproducing Kernels in Representation Theory. Symmetries in Complex Analysis, B. Gilligan, G. Roos eds.  Contemp. Math. Vol. 468, (2008) 1-98.

\bibitem[Ho]{Ho} Howe, R., Reciprocity laws in the theory of dual pairs, Prog. in Math. Vol. 40, Ed. P. Trombi, (1983) 159-176.
\bibitem[AJ]{AJ}
Juhl, A., Families of conformally covariant differential operators,
$Q$-curvature and holography. Prog. in Math., Vol. 275.
Birkhäuser Verlag, Basel, (2009).
\bibitem[K1]{K1} Kobayashi, T., Discrete decomposability of the restriction of $A_{\mathfrak q}(\lambda) $ with respect to reductive subgroups and its applications, Invent. Math. Vol. 117, (1994) 181-205.

\bibitem[K2]{K2} Kobayashi, T., Discrete decomposability of the restriction of $A_{\mathfrak q}(\lambda) $ with respect to reductive subgroups II-micro-local analysis and asymptotic $K-$support, Ann. of Math. Vol. 147, (1998) 709-729.



\bibitem[K3]{K3} Kobayashi, T., Discrete decomposability of the restriction of $A_{\mathfrak q}(\lambda) $ with respect to reductive subgroups III, Invent. Math. Vol.  131, (1998) 229-256.

\bibitem[K4]{K4} Kobayashi, T., Restriction of unitary representations of real reductive groups, Prog. in Math. Vol. 229, Edts. Anker-{\O}rsted, (2005) 139-209.

\bibitem[K5]{K5} Kobayashi, T., Shintani functions, real spherical manifolds, and symmetry breaking operators. In G. Mason, I. Penkov, and Joseph A. Wolf, editors, Developments and Retrospectives in Lie Theory Geometric and Analytic Methods, Vol. 37 of Developments in Mathematics,(2014) 127-159.
\bibitem[K6]{K6} Kobayashi, T.,  A program for branching problems in the representation theory of real reductive groups,    Representations of Reductive Groups: In Honor of the 60th Birthday of David A. Vogan, Progress in Mathematics, Editors Nevins, M., Trapa, P.,  Vol.  312 (2015) 277-322.
\bibitem[KO]{KO}  Kobayashi, T., Oshima, Y., Classification of discretely decomposable $ A_\mathfrak q(\lambda)$ with respect to reductive symmetric pairs. Adv. Math. Vol. 231 (2012)  2013–2047.
\bibitem[KO2]{KO2} Kobayashi, T., Oshima, Y., Finite multiplicity theorems for induction and restriction. Adv. in Math. Vol. 248 (2013) 921-944.

\bibitem[KobPev]{KobPev}	  Kobayashi T.,   Pevzner, M.,  Differential symmetry breaking operators. I. General theory and F-method. Sel. Math. New Ser., Vol. 22(2) (2016) 801-845.
 \bibitem[KobPev2]{KobPev2}	  Kobayashi T.,   Pevzner, M.,  Differential symmetry breaking operators. II. Rankin--Cohen operators for symmetric pairs. Sel. Math. New Ser., Vol. 22(2)  (2016) 847-911.
  \bibitem[KobPev3]{KobPev3}	  Kobayashi T.,   Pevzner, M.,   Inversion of Rankin-Cohen operators via holographic transform, ArXiv:1812.09733. To appear in Annales de l'Institut Fourier

  \bibitem[KobSp1]{KobSp1} Kobayashi, T., Speh, B.,  Symmetry Breaking for Representations of Rank One Orthogonal Groups, Vol. 238  Mem. Amer. Math. Soc (2015).

 \bibitem[KobSp2]{KobSp2} Kobayashi, T., Speh, B., Symmetry Breaking for Representations of Rank One Orthogonal Groups II, Lecture Notes in Math. Springer,   Vol. 2234 (2018).
  \bibitem[LZ]{LZ}  Liu, H., Zhang, G., Realization of quaternionic discrete series on the unit ball in $\mathbb{H}^d.$ J. Funct. Anal.  Vol. 262 (2012), no. 7, 2979–3005.



\bibitem[M1]{M1} Molch$\check{\textrm{a} }$nov V.F.,  The restriction of the complementary series of de Sitter group $SO(2,3)$ to the subgroup $SO(2,2)$. Funkz. Analiz., Uljanovsk, Vol. 9, (1977) 102-112.

     \bibitem[M2]{M2} Molch$\check{\textrm{a} }$nov, V. F., Tensor products of unitary representations of the three dimensional Lorentz group.  Izv. Akad. Nauk SSSR Ser. Mat. Vol. 43, (1979) 4, 860–891.

\bibitem[N]{N} Nakahama, R.,  Construction of intertwining operators between holomorphic discrete series representations. SIGMA  Vol. 15 (2019) 101-202.
\bibitem[OO1]{OO1} Olafsson, G., {\O}rsted, B., The holomorphic discrete series for Affine Symmetric spaces I, J. Funct. Anal. Vol. 81, (1988) 126-159.
\bibitem[OO2]{OO2} Olafsson, G., {\O}rsted, B., The holomorphic discrete series of an affine symmetric space and representations with  reproducing kernels, Trans. Amer. Math. Soc. Vol. 326, (1991) 385-405.
\bibitem[OV]{OV} {\O}rsted, B., Vargas, J., Restriction of Discrete Series representations (Discrete spectrum), Duke Math. Journal, Vol. 123,  (2004) 609-631.
    \bibitem[OV2]{OV2} {\O}rsted, B., Vargas, J., Branching problems for semisimple Lie groups and reproducing kernels.  C. R. Acad. Sci. Paris, Ser. I Vol. 357,  (2019) 697-707.
        \bibitem[PZ]{PZ}    Peng, L.,    Zhang, G., Tensor products of holomorphic representations and bilinear differential
operators. J. Funct. Anal. Vol. 210, (2004) 171-192.

 \bibitem[Po]{Po} Poulsen, N.S., On $C^\infty$-vectors and intertwining bilinear forms for representations of Lie groups,
J. Funct. Anal., Vol. 9, (1972) Issue 1, 87-120.
\bibitem[Sch]{Sch} Schmid, W., Homogeneous complex manifolds and representations of semisimple Lie groups. Dissertation, University of California, Berkeley, CA, 1967. Math. Surveys Monogr., 31, Representation theory and harmonic analysis on semisimple Lie groups, 223–286, Amer. Math. Soc., Providence, RI, (1989).
\bibitem[Shi]{Shi}  Shimeno, N., The Plancherel formula for spherical functions with a one-dimensional K-type on a simply connected simple Lie group of Hermitian type. J. Funct. Anal. Vol. 121,  (1994) 330–388.
\bibitem[Va]{Va} Vargas, J. A., Restriction of some discrete series representations, Algebras Groups Geom. Vol. 18  (2001) , no. 1 85-99.
    \bibitem[Va2]{Va2} Vargas, J. A., Restriction of a discrete series of a semisimple Lie group to reductive subgroups, Contemp. Math., Vol. 544, (2011) 43-53.
        \bibitem[Vo]{Vo} Vogan, D., The algebraic structure of the representations of semisimple Lie groups, Annals of Math. Vol. 109 (1979) n 1, 1-60.
\bibitem[Wa1]{Wa1} Wallach, N., Real reductive groups I, Academic Press (1988).
\bibitem[Wa2]{Wa2} Wallach, N., Real reductive groups II, Academic Press (1992).
\bibitem[WW]{WW}  Wallach, N., Wolf, J.,  Completeness of Poincar\'e series for automorphic forms associated to the integrable discrete series. Representation Theory of Reductive Groups (Park City, 1982), Birkhauser, Prog. in Math., Vol. 40 (1983) 265-281.
\bibitem[Wo]{Wo} Wong, Hon-Wai, Dolbeault cohomological realization of Zuckerman modules associated with finite rank representations. J. Funct. Anal. 129 (1995), no. 2, 428–454.
\end{thebibliography}
\end{document}